\newtheorem{theorem}{Theorem}[section]
\newtheorem{proposition}[theorem]{Proposition}
\newtheorem{lemma}[theorem]{Lemma}
\newtheorem{corollary}[theorem]{Corollary}
\theoremstyle{remark}
\newtheorem{remark}[theorem]{Remark}
\newtheorem{example}[theorem]{Example}
\newcommand{\alert}[1]{{\color{DarkGreen}\emph{#1}}}
\newcommand{\ie}{\text{i.e.}\;}
\newcommand{\st}{^{\text{st}}}
\newcommand{\nd}{^{\text{nd}}}
\renewcommand{\th}{^{\text{th}}}
\newcommand{\BB}{\mathcal{B}}
\newcommand{\DD}{\mathcal{D}}
\newcommand{\II}{\mathcal{I}}
\newcommand{\KK}{\mathcal{K}}
\newcommand{\LL}{\mathcal{L}}
\newcommand{\pf}{\mathfrak{p}}
\newcommand{\qf}{\mathfrak{q}}
\newcommand{\hh}{\mathbf{h}}
\newcommand{\cc}{\mathsf{c}}
\newcommand{\pathThree}[4]{
	\begin{tikzpicture}[scale=#3]\tiny
		\draw[white!50!gray](0,0) grid[step=.4] (1.2,1.2);
		\draw(.6,1.3) node{};
		\ifstrequal{#4}{A}{\draw(0,0) -- (1.2,1.2);}{}
		\begin{pgfonlayer}{background}
			\fill[#2] (0,0) -- (1.2,0) -- (1.2,1.2) -- (0,1.2) -- cycle;
		\end{pgfonlayer}
		\foreach \a/\b/\c in {#1}{
			\draw(0,0) -- (0,.4*\a) -- (.4,.4*\a) -- (.4,.4*\b) -- (.8,.4*\b) -- (.8,.4*\c) -- (1.2,.4*\c) -- (1.2,1.2);
		}
	\end{tikzpicture}
}
\newcommand{\pathFour}[4]{
	\begin{tikzpicture}[scale=#3]\tiny
		\draw[white!50!gray](0,0) grid[step=.4] (1.6,1.6);
		\draw(.8,1.7) node{};
		\ifstrequal{#4}{A}{\draw(0,0) -- (1.6,1.6);}{}
		\begin{pgfonlayer}{background}
			\fill[#2] (0,0) -- (1.6,0) -- (1.6,1.6) -- (0,1.6) -- cycle;
		\end{pgfonlayer}
		\foreach \a/\b/\c/\d in {#1}{
			\draw(0,0) -- (0,.4*\a) -- (.4,.4*\a) -- (.4,.4*\b) -- (.8,.4*\b) -- (.8,.4*\c) -- (1.2,.4*\c) -- (1.2,.4*\d) -- (1.6,.4*\d) -- (1.6,1.6);
		}
	\end{tikzpicture}
}
\newcommand{\dyckBThree}[3]{
	\begin{tikzpicture}[scale=#3]\tiny
		\draw[white!50!gray](0,0) grid[step=.4] (1.2,2.4);
		\draw(0,0) -- (1.2,1.2) -- (0,2.4);
		\draw(.6,2.5) node{};
		\begin{pgfonlayer}{background}
			\fill[#2] (0,0) -- (1.2,0) -- (1.2,2.4) -- (0,2.4) -- cycle;
		\end{pgfonlayer}
 		\foreach \ax/\ay/\bx/\by/\cx/\cy/\dx/\dy/\ex/\ey/\fx/\fy in {#1}{
			  \draw(0,0) -- (0,\ay) -- (\ax,\ay) -- (\ax,\by) -- (\bx,\by) -- (\bx,\cy) -- (\cx,\cy) 
 			  -- (\cx,\dy) -- (\dx,\dy) -- (\dx,\ey) -- (\ex,\ey) -- (\ex,\fy) -- (\fx,\fy);

		}
	\end{tikzpicture}
}
\newcommand{\cOne}{white}
\newcommand{\cTwo}{white!80!green}
\author{Henri M{\"u}hle}
\address{LIX, {\'E}cole Polytechnique, F-91128 Palaiseau, France}
\thanks{This work was funded by the FWF Research Grant No. Z130-N13, as well as a Public Grant overseen by the French National Research Agency (ANR) as part of the ``Investissements d'Avenir'' Program (Reference: ANR-10-LABX-0098), and Digiteo project PAAGT (Nr. 2015-3161D)}
\email{henri.muehle@lix.polytechnique.fr}
\title{A Heyting Algebra on Dyck Paths of Type $A$ and $B$}
\keywords{Dyck path, Dominance order, Heyting algebra, Distributive lattice, Symmetric group, Hyperoctahedral group, Catalan numbers}
\subjclass[2010]{06D20 (primary), and 06A07 (secondary)}
\begin{document}

\maketitle

\begin{abstract}
	In this article we investigate the lattices of Dyck paths of type $A$ and $B$ under dominance order, and explicitly describe their Heyting algebra structure.  This means that each Dyck path of either type has a relative pseudocomplement with respect to some other Dyck path of the same type.  While the proof that this lattice forms a Heyting algebra is quite straightforward, the explicit computation of the relative pseudocomplements using the lattice-theoretic definition is quite tedious.  We give a combinatorial description of the Heyting algebra operations join, meet, and relative pseudocomplement in terms of height sequences, and we use these results to derive formulas for pseudocomplements and to characterize the regular elements in these lattices.
\end{abstract}

\section{Introduction}
	\label{sec:introduction}
In this article we mostly consider lattice paths from $(0,0)$ to $(n,n)$ that consist only of up- and right-steps and that stay (weakly) above the diagonal $x=y$.  We denote the set of all these paths by $D_{n}^{A}$, and refer to them as \alert{Dyck paths of type $A$}, a notation that is justified in the next paragraphs.

A path $\pf\in D_{n}^{A}$ \alert{dominates} another Dyck path $\pf'$ of the same length, if $\pf'$ stays weakly below $\pf$ at all time, and in that case we write $\pf'\leq_{D}\pf$.  We write $\DD_{n}^{A}$ for the resulting poset.  In fact, $\DD_{n}^{A}$ is a distributive lattice, and probably first appeared in \cite{stanley75fibonacci}*{Example~4} as a partial order on the set of order ideals of a triangular poset with $n-1$ minimal elements.  (It is an easy exercise to work out the isomorphism between these two posets.)  Apart from these two guises, the lattice $\DD_{n}^{A}$ also appears as the Bruhat order on noncrossing partitions~\cite{gobet16noncrossing} and $312$-avoiding permutations~\cites{armstrong09generalized,barcucci05distributive}.  A lot of research has been done on enumerative and structural aspects of $\DD_{n}^{A}$~\cites{ferrari05lattices,ferrari11lattices,ferrari13unimodality,ferrari14edges,ferrari15chains}, and it has interesting connections to the change of basis matrix of the Temperley--Lieb algebra~\cites{cautis03matrix,gobet16temperley}.  

The lattice $\DD_{n}^{A}$ naturally constitutes a principal order filter in the lattice $\LL(n,n)$ of all lattice paths from $(0,0)$ to $(n,n)$ using only up- and right-steps.  In general, $\LL(n,m)$ is also a distributive lattice, and it is clearly a sublattice of Young's lattice.  For a certain choice of parameters $\LL(n,m)$ is isomorphic to the Bruhat order on parabolic quotients of the symmetric group~\cite{stanley80weyl}*{Section~4}.  These lattices have been further studied in \cites{bennett94two,proctor84bruhat} and the references given therein.

The previous paragraphs suggest a strong connection between $\DD_{n}^{A}$ and the symmetric group $\mathfrak{S}_{n}$, which is isomorphic to the Coxeter group $A_{n-1}$.  This is the main motivation for the superscript ``$A$''.  We also remark that the triangular poset with $n-1$ minimal elements also appears naturally in this context, namely as the so-called root poset of $A_{n-1}$.  

It is well known that the cardinality of $D_{n}^{A}$ is given by the $n\th$ Catalan number $\tfrac{1}{n+1}\tbinom{2n}{n}$~\cite{stanley01enumerative}*{Exercise~6.19(i)}.  It is often the case for combinatorial objects associated with the symmetric group and enumerated by the Catalan numbers, that the subset of these objects with a central symmetry is an interesting combinatorial family in its own right.  Famous examples are noncrossing partitions fixed under a half turn, or centrally symmetric triangulations of a convex polygon.  

Something similar happens for Dyck paths of type $A$.  Consider the set $D_{n}^{B}$ of lattice paths from $(0,0)$ to $(2n,2n)$ that stay weakly above the diagonal $x=y$, and that are invariant under reflection about the diagonal $x=2n-y$.  For brevity, identify each of these paths with the subpath consisting of the first $2n$ steps.  The cardinality of $D_{n}^{B}$ is given by the central binomial coefficient $\binom{2n}{n}$, which is also the number of order ideals in the root poset of the Coxeter group $B_{n}$; an explicit bijection was given in \cite{stump13more}*{Section~3.1}.  Consequently, we call the elements of $D_{n}^{B}$ \alert{Dyck paths of type $B$}.  We remark that the Coxeter group $B_{n}$ is isomorphic to the hyperoctahedral group of rank $n$.  It is straightforward to show that the dominance order on $D_{n}^{B}$ forms a distributive lattice, denoted by $\DD_{n}^{B}$, which is isomorphic to the lattice of order ideals of the root poset of the Coxeter group $B_{n}$. 

The main purpose of this article is to outline further structural commonalities between the lattices $\DD_{n}^{A}$ and $\DD_{n}^{B}$, which fit nicely into the combinatorial relationship between the Coxeter groups $A_{n-1}$ and $B_{n}$ that is part of the stream of Coxeter-Catalan combinatorics.  In particular, since $\DD_{n}^{A}$ and $\DD_{n}^{B}$ are both finite distributive lattices, they naturally possess a Heyting algebra structure, and it is our goal to combinatorially understand the relation between these Heyting algebras.  Our first main result is the following theorem.

\begin{theorem}\label{thm:dyck_algebras}
	For $n>0$ the lattice $\DD_{n}^{B}$ of Dyck paths of type $B$ under dominance order forms a Heyting algebra.  The sublattice $\DD_{n}^{A}$ of Dyck paths of type $A$ under dominance order forms a Heyting algebra as well, but it is not a Heyting subalgebra of the former.  Conversely, however, $\DD_{n}^{B}$ is (isomorphic to) a Heyting subalgebra of $\DD_{2n}^{A}$.
\end{theorem}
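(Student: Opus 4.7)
My plan rests on three standard ingredients: a finite distributive lattice is automatically a Heyting algebra with $x\to y=\bigvee\{z : z\wedge x\le y\}$, the lattice of order ideals of a finite poset is distributive by Birkhoff's representation theorem, and the fixed-point set of a lattice automorphism of a distributive lattice is a Heyting subalgebra. The first two assertions of the theorem follow almost immediately: as the introduction recalls, $\DD_n^A$ and $\DD_n^B$ identify with the lattices of order ideals of the root posets of types $A_{n-1}$ and $B_n$ under inclusion, with dominance matching containment. Distributivity of $\DD_n^A$ was already established in \cite{ferrari05lattices} and the type-$B$ argument is identical, so both lattices are finite distributive and therefore Heyting.

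For the claim that $\DD_n^A$ is not a Heyting subalgebra of $\DD_n^B$, I would exhibit a small explicit counterexample, namely a pair $P,Q\in\DD_n^A$ for which the relative pseudocomplement $P\to Q$, computed inside the larger lattice $\DD_n^B$, equals a genuinely type-$B$ path not lying in $\DD_n^A$. The mechanism is clear: the join defining $P\to Q$ ranges over strictly more witnesses in $\DD_n^B$ than in $\DD_n^A$, so a purely type-$B$ witness can dominate every type-$A$ one. A finite search in $\DD_3^B$ or $\DD_4^B$ should produce such a pair, and this is the only non-formal part of the argument.

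For the final assertion I would invoke the alternative description of $\DD_n^B$ recalled in the introduction: a type-$B$ Dyck path of semilength $n$ is exactly a type-$A$ Dyck path of semilength $2n$ that is fixed under a natural involution $\iota$, namely reflection across the antidiagonal of the $2n\times 2n$ square. Since $\iota$ is an order automorphism of the distributive lattice $\DD_{2n}^A$, it is a Heyting-algebra automorphism, so it preserves $\wedge$, $\vee$, $\top$, $\bot$, and $\to$; consequently the fixed-point set $\mathrm{Fix}(\iota)=\DD_n^B$ is closed under all of these operations and forms a Heyting subalgebra of $\DD_{2n}^A$.

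The main obstacle is the middle step: the other two assertions follow from soft general principles once the identifications with order-ideal lattices and with a fixed-point set have been pinned down, whereas producing the explicit pair $(P,Q)$ witnessing the failure of Heyting-subalgebra-hood requires a direct combinatorial search and careful verification that the resulting relative pseudocomplement is not a type-$A$ path. Everything else, including the verification that $\iota$ is an order automorphism and that its fixed-point set is precisely $\DD_n^B$, reduces to matching the conventions for semilength across the two types, which is bookkeeping rather than genuine content.
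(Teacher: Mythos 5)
Your proposal is correct in substance and follows the same overall architecture as the paper: finite distributivity (via componentwise min/max on height sequences, or equivalently via the order-ideal/Birkhoff picture of Proposition~\ref{prop:dyck_paths_root_ideals}) gives the Heyting structure on both lattices, and the last assertion is obtained by identifying $\DD_{n}^{B}$ with the centrally symmetric elements of $\DD_{2n}^{A}$; your ``fixed points of a lattice automorphism are closed under $\wedge$, $\vee$, $\hat{0}$, $\hat{1}$, $\to$'' argument is just a more elementary phrasing of the paper's appeal to the equalizer $\text{Eq}(\text{id},\psi)$ being a Heyting subalgebra (Proposition~\ref{prop:equalizer_subobject}), and it buys you independence from the category-theoretic citation at the cost of spelling out that an order automorphism preserves relative pseudocomplements.

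The one place where you diverge is the middle claim, and there your plan is left unexecuted: you defer the counterexample to ``a finite search in $\DD_{3}^{B}$ or $\DD_{4}^{B}$'' without exhibiting a pair or arguing that one must exist, so as written this step is a gap, albeit a trivially fillable one. No search is needed: take $\pf_{1}=\pf_{2}=\pf$ for any $\pf\in D_{n}^{A}$. By Lemma~\ref{lem:pseudocomplement_comparable}, $\pf\to\pf$ is the top element of whichever algebra you compute in, and the top elements differ, since $\mathfrak{1}^{A}$ (height sequence $(n,n,\ldots,n)$, i.e.\ $u^{n}r^{n}$) is strictly below $\mathfrak{1}^{B}$ (the all-up path $u^{2n}$) in $\DD_{n}^{B}$; hence $D_{n}^{A}$ is not closed under the relative pseudocomplement of $\DD_{n}^{B}$, which is exactly the paper's one-line argument. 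Two small bookkeeping points you should not wave away entirely: the theorem also asserts that $\DD_{n}^{A}$ is a sublattice of $\DD_{n}^{B}$ (easy from the meet/join formulas, but it needs a sentence), and for the last assertion you must check that the bijection of Corollary~\ref{cor:dyck_b_equalizer} carries the dominance order on $D_{n}^{B}$ to the induced order on the fixed-point set, so that the Heyting structure you get on the fixed points is the one on $\DD_{n}^{B}$ and not merely an abstract copy.
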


The second main contribution of this article, is a purely combinatorial description of these Heyting algebras, \ie we give explicit formulas for join, meet, and relative pseudocomplements in these Heyting algebras using only the combinatorial realization of Dyck paths in terms of height sequences.  Moreover, we use these formulas to describe pseudocomplements and characterize the regular elements in these algebras.  A thorough investigation of the Heyting algebra of Dyck paths of type $A$ from a logic-theoretical standpoint was recently carried out in \cite{ferrari15dyck}.  More precisely, in \cite{ferrari15dyck} the Heyting algebra of Dyck paths of type $A$ was related to a fragment of interval temporal logic.  

\smallskip

This article is organized as follows: in Section~\ref{sec:preliminaries} we recall the necessary lattice-theoretic notions of Heyting algebra and distributive lattice.  Moreover, we formally define Dyck paths, realize them in terms of height sequences, and relate Dyck paths of type $B$ to centrally symmetric Dyck paths of type $A$.  Subsequently we characterize the join-prime Dyck paths, and relate the lattices of Dyck paths to the lattices of order ideals of some triangular posets.  In Section~\ref{sec:formulas}, we explicitly describe the height functions of the relative pseudocomplements and pseudocomplements in the Heyting algebras of Dyck paths.  More precisely, we start with the investigation of the Heyting algebra of \emph{all} monotone lattice paths, and derive the formulas in the Heyting algebras of Dyck paths of type $A$ and $B$ from this.  

\section{Preliminaries}
	\label{sec:preliminaries}
In this section we define the basic notions needed in this article.  Throughout this article we use the abbreviation $[n]=\{1,2,\ldots,n\}$.  

\subsection{Heyting Algebras and Distributive Lattices}
	\label{sec:heyting_distributive}
We start by recalling the notions of Heyting algebra and distributive lattice.  For further background we refer the reader to \cite{balbes74distributive} or \cite{blyth05lattices}*{Chapter~7}.  

Let $\LL=(L,\leq)$ be a lattice with least element $\hat{0}$ and greatest element $\hat{1}$.  Given $x,y\in L$ we say that the greatest element $z\in L$ satisfying 
\begin{equation}\label{eq:pseudocomplement}
	x\wedge z\leq y
\end{equation}
is (if it exists) the \alert{relative pseudocomplement} of $x$ with repect to $y$, and we usually write $x\to y$.  If relative pseudocomplements exist for all $x,y\in L$, then $\LL$ is a \alert{Heyting algebra}.  Moreover, if $\LL$ is a Heyting algebra, then the element $x\to\hat{0}$ for $x\in L$ is the \alert{pseudocomplement} of $x$, and we usually write $x^{\cc}$.  An element $x\in L$ is \alert{regular} if $(x^{\cc})^{\cc}=x$.  It is straightforward to verify that the poset $\BB=(B,\leq)$, where $B=\{x\in L\mid x\;\text{is regular}\}$, is a Boolean lattice.

\begin{lemma}[\cite{balbes74distributive}*{Theorem~IX.1.3(iii)}]\label{lem:pseudocomplement_comparable}
	Let $\LL=(L,\leq)$ be a Heyting algebra.  If $x,y\in L$ satisfy $x\leq y$, then $x\to y=\hat{1}$.  
\end{lemma}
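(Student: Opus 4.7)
The plan is to unpack the definition of relative pseudocomplement and check that $\hat{1}$ itself meets the defining condition; since it is the greatest element of $L$ outright, it is automatically the greatest element satisfying any condition it satisfies.

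First I would recall that, by definition, $x \to y$ is the greatest element $z \in L$ such that $x \wedge z \leq y$. So to identify $x \to y$ with $\hat{1}$, it suffices to verify that $z = \hat{1}$ is a valid candidate, i.e.\ that $x \wedge \hat{1} \leq y$. Since $\hat{1}$ is the greatest element of $L$, we have $x \wedge \hat{1} = x$. Combined with the hypothesis $x \leq y$, this gives $x \wedge \hat{1} = x \leq y$, as required.

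Then I would conclude: $\hat{1}$ satisfies the defining inequality \eqref{eq:pseudocomplement} and, being the maximum of $L$, is trivially the greatest such element. Hence $x \to y = \hat{1}$.

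There is no real obstacle here; the argument is entirely formal and relies only on the absorption identity $x \wedge \hat{1} = x$ and the hypothesis $x \leq y$. The only thing to be mindful of is that in a Heyting algebra the relative pseudocomplement is guaranteed to exist, so we do not need to separately argue existence before identifying it with $\hat{1}$.
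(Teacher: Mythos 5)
Your argument is correct and complete: since $x\wedge\hat{1}=x\leq y$, the top element satisfies the defining inequality \eqref{eq:pseudocomplement}, and being the maximum of $L$ it is automatically the greatest such element, so $x\to y=\hat{1}$. The paper itself does not prove this lemma but cites it from the literature, and your one-line verification is exactly the standard argument that reference would give.
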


If every three elements $x,y,z\in L$ satisfy one of the two equivalent laws
\begin{align}\label{eq:meet_distributive}
	x\wedge(y\vee z) & = (x\wedge y)\vee(x\wedge z),\quad\text{and}\\
	x\vee(y\wedge z) & = (x\vee y)\wedge(x\vee z),
\end{align}
then $\LL$ is \alert{distributive}.  The following theorem states the connection between distributive lattices and Heyting algebras.

\begin{theorem}[\cite{blyth05lattices}*{Theorem~7.10}]\label{thm:distributive_heyting}
	Every Heyting algebra is distributive.  Conversely, every finite distributive lattice is a Heyting algebra.
\end{theorem}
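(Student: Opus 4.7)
The plan is to prove the two implications separately. The forward direction (Heyting implies distributive) is a purely algebraic consequence of the Galois-style adjunction $x\wedge z\leq y\iff z\leq x\to y$ supplied by \eqref{eq:pseudocomplement}. The backward direction (finite distributive implies Heyting) is constructive: I would define the relative pseudocomplement as the join of all candidates and use finiteness plus distributivity to verify that this join itself is a candidate.

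For the forward direction, suppose $\LL$ is a Heyting algebra and fix $x,y,z\in L$. The inequality $(x\wedge y)\vee(x\wedge z)\leq x\wedge(y\vee z)$ holds in every lattice, so only the reverse needs proof. Set $w=(x\wedge y)\vee(x\wedge z)$. From $x\wedge y\leq w$ and \eqref{eq:pseudocomplement} I obtain $y\leq x\to w$; similarly $z\leq x\to w$, and hence $y\vee z\leq x\to w$. Applying \eqref{eq:pseudocomplement} once more yields $x\wedge(y\vee z)\leq w$, which is the desired inequality. Since the two distributive laws in \eqref{eq:meet_distributive} are equivalent in any lattice, this suffices.

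For the converse, let $\LL=(L,\leq)$ be a finite distributive lattice and fix $x,y\in L$. The set $S=\{z\in L\mid x\wedge z\leq y\}$ is nonempty (it contains $\hat{0}$), and its join $u=\bigvee S$ exists because $L$ is finite. Iterating \eqref{eq:meet_distributive} over the finite join gives
\[
	x\wedge u \;=\; x\wedge\bigvee_{z\in S}z \;=\; \bigvee_{z\in S}(x\wedge z) \;\leq\; y,
\]
where the last step uses that every $x\wedge z$ with $z\in S$ lies below $y$. Hence $u\in S$, so $u$ is the maximum of $S$, which is precisely the defining property of $x\to y$ in \eqref{eq:pseudocomplement}. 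As this holds for every pair $(x,y)\in L\times L$, $\LL$ is a Heyting algebra.

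The argument is essentially routine; the one point that requires care is the verification that the candidate $\bigvee S$ in the second direction actually belongs to $S$, and this is exactly where distributivity is needed to commute the meet past the finite join. Finiteness removes any worry about the existence of the join or about the stronger infinite distributive identity one would otherwise have to assume; consequently no further hypothesis beyond the two given in the statement is needed.
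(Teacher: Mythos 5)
Your proof is correct; both directions are the standard textbook argument (the adjunction $z\leq x\to y\iff x\wedge z\leq y$ for distributivity, and $x\to y=\bigvee\{z\mid x\wedge z\leq y\}$ with finiteness plus distributivity for the converse). The paper itself gives no proof of this theorem---it is quoted from Blyth---so there is nothing to compare against beyond noting that your argument is the one found in the cited reference.
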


We recall some further background on Heyting algebras, borrowing slightly from category theory.  Let $\KK,\LL$ be two Heyting algebras, and let $f,g:\KK\to\LL$ be two Heyting algebra morphisms.  Define $\text{Eq}(f,g)=\{k\in K\mid f(k)=g(k)\}$ to be the \alert{equalizer} of $f$ and $g$.  We have the following result.

\begin{proposition}\label{prop:equalizer_subobject}
	Let $\KK,\LL$ be Heyting algebras, and let $f,g:\KK\to\LL$ be two Heyting algebra morphisms.  The equalizer $\text{Eq}(f,g)$ is a Heyting subalgebra of $\KK$.
\end{proposition}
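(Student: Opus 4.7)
The plan is to read the word ``maps'' in the statement as \emph{Heyting algebra homomorphisms}, i.e.\ functions $\KK \to \LL$ preserving $\vee$, $\wedge$, $\to$, $\hat{0}$, and $\hat{1}$ (otherwise the proposition is simply false, and the use of the category-theoretic word ``equalizer'' already points toward this reading). Once this is fixed the proof is a routine closure check, but one that I would lay out carefully in order to avoid the one genuine subtlety, namely that the relative pseudocomplement of the subalgebra has to agree with the one inherited from $\KK$.

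First I would observe that $f(\hat{0}) = \hat{0} = g(\hat{0})$ and $f(\hat{1}) = \hat{1} = g(\hat{1})$, so $\hat{0},\hat{1} \in \text{Eq}(f,g)$; in particular the set is nonempty and carries the correct bounds. Then, for any $x,y \in \text{Eq}(f,g)$ and any binary operation $\diamond \in \{\vee,\wedge,\to\}$, I would compute
\[
	f(x \diamond y) \;=\; f(x) \diamond f(y) \;=\; g(x) \diamond g(y) \;=\; g(x \diamond y),
\]
using homomorphy at the outer equalities and $f(x)=g(x)$, $f(y)=g(y)$ in the middle. This shows that $\text{Eq}(f,g)$ is closed under the three Heyting operations of $\KK$.

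The remaining point---and the only place where I expect any thought, mild as it is---is verifying that the restricted operations really furnish a Heyting algebra structure on $\text{Eq}(f,g)$ rather than merely a meet- and join-closed subposet whose intrinsic relative pseudocomplements might differ from those computed in $\KK$. This is immediate from the defining property~\eqref{eq:pseudocomplement}: the element $x \to y$, computed in $\KK$, is already the \emph{greatest} $z \in K$ with $x \wedge z \leq y$, and by the previous paragraph it happens to lie in $\text{Eq}(f,g)$; hence \emph{a fortiori} it is the greatest such $z$ inside $\text{Eq}(f,g)$ as well. Thus $\text{Eq}(f,g)$ is a Heyting subalgebra of $\KK$, as claimed.
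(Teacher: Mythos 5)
Your proof is correct, and it takes a genuinely different route from the paper.  You read ``maps'' as Heyting algebra homomorphisms and then verify everything by hand: $\hat{0},\hat{1}\in\text{Eq}(f,g)$, closure under $\vee$, $\wedge$, $\to$ via $f(x\diamond y)=f(x)\diamond f(y)=g(x)\diamond g(y)=g(x\diamond y)$, and the observation that the relative pseudocomplement computed in $\KK$, once known to lie in $\text{Eq}(f,g)$, is \emph{a fortiori} the greatest element of the subset satisfying $x\wedge z\leq y$, so the inherited operation really is the Heyting implication of the subalgebra.  The paper instead disposes of the statement in one line of category theory: the pair $(\text{Eq}(f,g),e)$, with $e$ the inclusion, is an equalizer of $f$ and $g$ in the category of Heyting algebras, and the result is quoted from Barr--Wells.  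Your reading of ``maps'' as homomorphisms is exactly the hypothesis the paper leaves implicit by working in that category (taken literally, for arbitrary set maps the statement would indeed fail), so the two arguments prove the same thing.  What your approach buys is self-containedness and transparency---it makes visible precisely which properties of homomorphisms are used and why the implication of the subalgebra agrees with that of $\KK$, a point the categorical citation hides inside the fact that equalizers in varieties of algebras are computed on underlying sets.  What the paper's approach buys is brevity and generality: the same one-line argument works verbatim in any category of algebraic structures, at the cost of outsourcing the content of your closure computation to the cited proposition.
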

\begin{proof}
	This follows for instance from \cite{barr98category}*{Proposition~9.1.5}.
\end{proof}

\subsection{Dyck Paths of Type $A$ and $B$}
	\label{sec:dyck_paths}
A \alert{Dyck path of semilength $n$} is a lattice path on $\mathbb{N}^{2}$ which starts at $(0,0)$, which consists of $2n$ steps either of the form $(0,1)$ (so-called \alert{up-steps}) or of the form $(1,0)$ (so-called \alert{right-steps}), and which stays weakly above the diagonal $x=y$.  A Dyck path of semilength $n$ \alert{is of type $A$} if it ends at $(n,n)$.  If we do not pose the restriction on the Dyck path enforcing it to end at $(n,n)$, then it \alert{is of type $B$}.  Let $D_{n}^{A}$ denote the set of Dyck paths of semilength $n$ being of type $A$, and let $D_{n}^{B}$ denote the set of Dyck paths of semilength $n$ being of type $B$.  Clearly we have $D_{n}^{A}\subseteq D_{n}^{B}$.  It is well known that
\begin{displaymath}
	\bigl\lvert D_{n}^{A}\bigr\rvert = \frac{1}{n+1}\binom{2n}{n},\quad\text{and}\quad\bigl\lvert D_{n}^{B}\bigr\rvert = \binom{2n}{n},
\end{displaymath}
and these numbers are known as \alert{Catalan numbers of type $A$ and $B$}, respectively.  See for instance \cite{stanley01enumerative}*{Exercise~6.19(i)} and \cite{nilsson12enumeration}*{Corollary~6}.  

It is standard, see for instance \cite{deutsch99dyck}*{Section~2} or \cite{nilsson12enumeration}*{Section~3.2}, that a Dyck path $\pf\in D_{n}^{B}$ can be encoded by a \alert{Dyck word}, namely a word $w_{\pf}$ of length $2n$ over the alphabet $\{u,r\}$ in which every prefix contains at least as many $u$'s as it contains $r$'s.  If $\pf\in D_{n}^{A}$, then $w_{\pf}$ is additionally required to contain exactly $n$ times the letter $u$ and $n$ times the letter $r$.  See Figure~\ref{fig:dyck_words} for an illustration.

\begin{figure}
	\centering
	\begin{tikzpicture}\small
		\draw(1,.2) node{\begin{tikzpicture}
				\def\x{1};
				\def\y{1};
				\draw[white!80!gray](0,0) grid[step=.4] (4.4,4.4);
				\draw(0,0) -- (4.4,4.4);
				\draw(0,0) -- (0,1.2) -- (.4,1.2) -- (.4,2) -- (.8,2) -- (.8,2.8) -- (2,2.8) -- (2,3.2) -- (2.4,3.2) -- (2.4,4) -- (2.8,4) -- (2.8,4.4) -- (4.4,4.4);
				\draw(2.4,-.6) node{\begin{tabular}{l}
					$\hh = (3,5,7,7,7,8,10,11,11,11,11)$\\
					$w = uuuruuruurrruruururrrr$\\
					\end{tabular}};
			\end{tikzpicture}};
		
		\draw(7,1) node{\begin{tikzpicture}
				\def\x{1};
				\def\y{1};
				\draw[white!80!gray](0,0) grid[step=.4] (4.4,6);
				\draw(0,0) -- (4.4,4.4) -- (2.8,6);
				\draw(0,0) -- (0,1.2) -- (.4,1.2) -- (.4,2) -- (.8,2) -- (.8,2.8) -- (1.6,2.8) -- (1.6,4) -- (2,4) -- (2,5.6) -- (2.8,5.6) -- (2.8,6);
				\draw(2.2,-.6) node{\begin{tabular}{l}
					$\hh = (3,5,7,7,10,14,14,15)$\\
					$w = uuuruuruurruuuruuuurru$\\
					\end{tabular}};
			\end{tikzpicture}};
	\end{tikzpicture}
	\caption{Two Dyck paths of semilength $11$, and their corresponding height sequences and Dyck words.  The left path is of type $A$, and the right path is of type $B$.}
	\label{fig:dyck_words}
\end{figure}
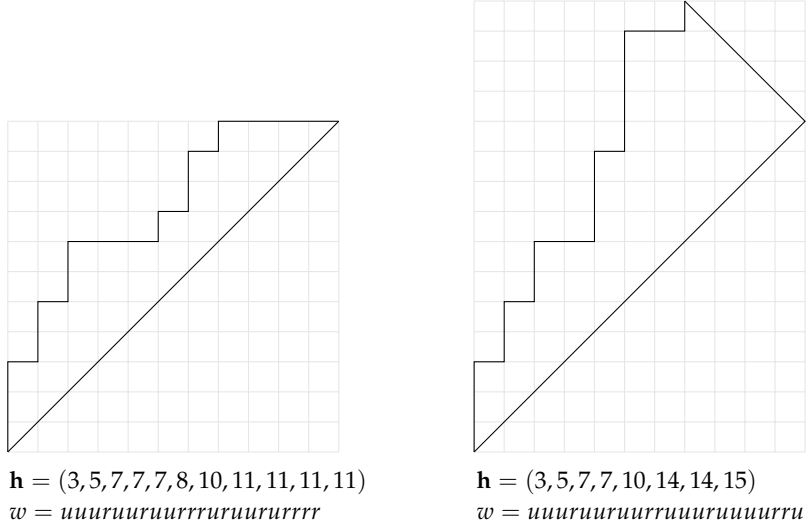

\subsubsection{Type $A$}
	\label{sec:dyck_paths_a}
For $\pf\in D_{n}^{A}$ define a sequence $\hh_{\pf}=(h_{1},h_{2},\ldots,h_{n})$, where $h_{i}$ is the number of $u$'s occurring in $w_{\pf}$ before the $i\th$ occurence of the letter $r$, and call this sequence the \alert{height sequence} of $\pf$.  The next lemma implies that the entry $h_{i}$ in $\hh_{\pf}$ determines precisely the height of $\pf$ at abscissa $i-1$. 

\begin{lemma}\label{lem:height_sequence_a}
	If $\pf\in D_{n}^{A}$, then $\hh_{\pf}=(h_{1},h_{2},\ldots,h_{n})$ satisfies $h_{1}\leq h_{2}\leq\cdots\leq h_{n}$, and $i\leq h_{i}\leq n$ for all $i\in[n]$.  Conversely, each such sequence uniquely determines a Dyck path in $D_{n}^{A}$.  
\end{lemma}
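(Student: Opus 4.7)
The plan is to establish the lemma as a bijection between Dyck paths in $D_n^A$ and sequences satisfying the stated conditions, by reading off the height sequence in one direction and reconstructing a Dyck word in the other.

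For the forward direction, given $\pf\in D_n^A$ with Dyck word $w_\pf$: monotonicity of $\hh_\pf$ is immediate since the running count of $u$'s in a prefix can only grow as the prefix is extended from one occurrence of $r$ to the next. The bound $h_i\leq n$ follows from the fact that $w_\pf$ contains exactly $n$ letters $u$ in total. For the lower bound $h_i\geq i$, I would apply the Dyck prefix condition to the initial segment of $w_\pf$ of length $h_i+i$, which ends at the $i\th$ occurrence of $r$ and therefore contains $h_i$ copies of $u$ and $i$ copies of $r$; the condition that every prefix contains at least as many $u$'s as $r$'s then yields $h_i\geq i$.

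For the converse, given a sequence $(h_1,\ldots,h_n)$ with $h_1\leq\cdots\leq h_n$ and $i\leq h_i\leq n$, I would define the word
\[
	w=u^{h_1}\,r\,u^{h_2-h_1}\,r\,u^{h_3-h_2}\,r\,\cdots\,r\,u^{h_n-h_{n-1}}\,r.
\]
Monotonicity guarantees that each exponent is nonnegative, and the combined bounds $n\leq h_n\leq n$ force $h_n=n$, so $w$ contains exactly $n$ letters $u$ and $n$ letters $r$. To verify the Dyck prefix condition, I would classify any prefix of $w$ according to the number $j$ of letters $r$ it contains and the number $k\geq 0$ of additional letters $u$ occurring after the $j\th$ $r$: such a prefix has $h_j+k$ letters $u$ (with the convention $h_0:=0$), and the hypothesis $h_j\geq j$ yields $h_j+k\geq j$ as required. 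By construction, the height sequence of $w$ coincides with $(h_1,\ldots,h_n)$, and the path reconstruction depends only on the sequence, giving the inverse map.

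The argument is entirely routine; the only care needed is in the boundary cases of the prefix verification, to ensure that the constraint $i\leq h_i$ (rather than only $i\leq h_n$) is invoked at every intermediate position, and that the forced equality $h_n=n$ is recorded so that no trailing $u$'s remain. No genuine combinatorial obstacle arises, and the lemma amounts to a direct translation between the path and sequence realizations underlying $\DD_n^A$.
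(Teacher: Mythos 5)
Your proposal is correct and follows essentially the same route as the paper: the forward direction reads off the bounds from the prefix ending at the $i\th$ occurrence of $r$, and the converse builds the word $u^{h_{1}}ru^{h_{2}-h_{1}}r\cdots ru^{h_{n}-h_{n-1}}r$ and checks the prefix condition from $h_{i}\geq i$. Your explicit bookkeeping of the forced equality $h_{n}=n$ and of prefixes ending inside a block of $u$'s is slightly more detailed than the paper's wording, but it is the same argument.
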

\begin{proof}
	If $\pf$ is a Dyck path of type $A$, then the conditions $h_{1}\leq h_{2}\leq\cdots\leq h_{n}=n$ and $i\leq h_{i}$ for $i\in[n]$ both follow easily from the assumption that $w_{\pf}$ is a Dyck word.
	
	\smallskip
	
	Conversely, if $\hh=(h_{1},h_{2},\ldots,h_{n})$ has the desired properties, then it is quickly checked that 
	\begin{displaymath}
		w_{\hh} = \underbrace{uu\cdots u}_{h_{1}}r\underbrace{uu\cdots u}_{h_{2}-h_{1}}r\cdots r\underbrace{uu\cdots u}_{h_{n}-h_{n-1}}r.
	\end{displaymath}
	is a Dyck word of type $A$. 
\end{proof}

\begin{remark}
	Our notion of a height sequence associated with a Dyck path of type $A$ coincides with the max-vector of a noncrossing partition defined in \cite{barcucci05distributive}*{Section~4}.
\end{remark}

Let $\pf,\pf'\in D_{n}^{A}$ with associated height sequences $\hh_{\pf}=(h_{1},h_{2},\ldots,h_{n})$ and $\hh_{\pf'}=(h'_{1},h'_{2},\ldots,h'_{n})$.  Define $\pf\leq_{D}\pf'$ if and only if $h_{i}\leq h'_{i}$ for $i\in[n]$, and call this partial order the \alert{dominance order on $ D_{n}^{A}$}.  We usually write $\DD_{n}^{A}$ for the poset $\bigl(D_{n}^{A},\leq_{D}\bigr)$.  Figure~\ref{fig:dominance_a4} shows $\DD_{4}^{A}$.  

\begin{lemma}\label{lem:dominance_words_a}
	For any $\pf,\pf'\in D_{n}^{A}$ we have $\pf\leq_{D}\pf'$ if and only if in every prefix of $w_{\pf}$ there are at least as many $r$'s as there are in the prefix of $w_{\pf'}$ of the same length.
\end{lemma}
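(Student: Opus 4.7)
The plan is to show that both conditions in the lemma faithfully encode the same geometric relationship---that the path $\pf$ stays weakly below $\pf'$---by exploiting a single bridging identity between the Dyck word and its height sequence. The identity I would rely on is that the $i$-th occurrence of the letter $r$ in $w_\pf$ sits at position $i+h_i$, since by the very definition of $h_i$ the prefix of $w_\pf$ ending in that letter contains exactly $i$ right-steps and $h_i$ up-steps. Both implications then reduce to symmetric prefix-versus-position arguments, and the statement of Lemma~\ref{lem:height_sequence_a} provides all the structural information we need.

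For the forward direction, I would assume $h_i\leq h'_i$ for all $i$ and fix an arbitrary prefix length $\ell\in[2n]$.  Let $k$ denote the number of $r$'s among the first $\ell$ letters of $w_{\pf'}$; the case $k=0$ is trivial.  For $k\geq 1$, the $k$-th right-step of $\pf'$ lies at position $k+h'_k\leq\ell$, and the hypothesis $h_k\leq h'_k$ forces the $k$-th right-step of $\pf$ to occur at position $k+h_k\leq k+h'_k\leq\ell$ as well.  Hence the first $\ell$ letters of $w_\pf$ contain at least $k$ right-steps, which is the desired inequality.

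For the reverse direction, I would fix $i\in[n]$ and set $\ell^{*}=i+h'_i$, which is precisely the position of the $i$-th right-step of $w_{\pf'}$.  Applying the prefix hypothesis at length $\ell^{*}$ shows that the first $\ell^{*}$ letters of $w_\pf$ contain at least $i$ right-steps, so the $i$-th right-step of $w_\pf$ occurs at or before position $\ell^{*}$; that is, $i+h_i\leq i+h'_i$, and cancellation gives $h_i\leq h'_i$.  I do not anticipate any substantial obstacle: the content of the lemma is a reformulation of dominance in word-theoretic language, and the only care required is to treat $k=0$ as a separate trivial case and to remember that it is the $i$-th letter $r$ itself (not the letter immediately before it) that sits at position $i+h_i$.
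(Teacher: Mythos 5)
Your proof is correct and rests on the same bridging identity as the paper's, namely that the $i$-th occurrence of $r$ in $w_{\pf}$ sits at position $i+h_{i}$, with both directions reduced to comparing these positions against prefix lengths. The only difference is cosmetic: by counting the $r$'s in the prefix of $w_{\pf'}$ and pulling the $k$-th right-step of $w_{\pf}$ back to position $k+h_{k}\leq\ell$, your forward direction avoids the paper's case split on whether the prefix of $w_{\pf}$ ends in $r$ or in $u$.
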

\begin{proof}
	This is a straightforward computation.
\end{proof}

\begin{theorem}[\cite{ferrari05lattices}*{Corollary~2.2}]\label{thm:distributive_a}
	For $n>0$ the poset $\DD_{n}^{A}$ is a distributive lattice.
\end{theorem}

\begin{figure}
	\centering
	\begin{tikzpicture}\small
		\def\x{1.5};
		\def\y{1.15};
		\draw(2*\x,1*\y) node(v1){\pathFour{1/2/3/4}{\cTwo}{.4}{A}};
		\draw(1*\x,2*\y) node(v2){\pathFour{2/2/3/4}{\cTwo}{.4}{A}};
		\draw(2*\x,2*\y) node(v3){\pathFour{1/3/3/4}{\cTwo}{.4}{A}};
		\draw(3*\x,2*\y) node(v4){\pathFour{1/2/4/4}{\cTwo}{.4}{A}};
		\draw(1*\x,3*\y) node(v5){\pathFour{2/3/3/4}{\cOne}{.4}{A}};
		\draw(2*\x,3*\y) node(v6){\pathFour{2/2/4/4}{\cTwo}{.4}{A}};
		\draw(3*\x,3*\y) node(v7){\pathFour{1/3/4/4}{\cOne}{.4}{A}};
		\draw(1*\x,4*\y) node(v8){\pathFour{3/3/3/4}{\cTwo}{.4}{A}};
		\draw(2*\x,4*\y) node(v9){\pathFour{2/3/4/4}{\cOne}{.4}{A}};
		\draw(3*\x,4*\y) node(v10){\pathFour{1/4/4/4}{\cTwo}{.4}{A}};
		\draw(1.5*\x,5*\y) node(v11){\pathFour{3/3/4/4}{\cOne}{.4}{A}};
		\draw(2.5*\x,5*\y) node(v12){\pathFour{2/4/4/4}{\cOne}{.4}{A}};
		\draw(2*\x,6*\y) node(v13){\pathFour{3/4/4/4}{\cOne}{.4}{A}};
		\draw(2*\x,7*\y) node(v14){\pathFour{4/4/4/4}{\cTwo}{.4}{A}};
		\draw(v1) -- (v2);
		\draw(v1) -- (v3);
		\draw(v1) -- (v4);
		\draw(v2) -- (v5);
		\draw(v2) -- (v6);
		\draw(v3) -- (v5);
		\draw(v3) -- (v7);
		\draw(v4) -- (v6);
		\draw(v4) -- (v7);
		\draw(v5) -- (v9);
		\draw(v6) -- (v9);
		\draw(v7) -- (v9);
		\draw(v5) -- (v8);
		\draw(v7) -- (v10);
		\draw(v8) -- (v11);
		\draw(v9) -- (v11);
		\draw(v9) -- (v12);
		\draw(v10) -- (v12);
		\draw(v11) -- (v13);
		\draw(v12) -- (v13);
		\draw(v13) -- (v14);
	\end{tikzpicture}
	\caption{The lattice $\DD_{4}^{A}$.  The highlighted paths are regular.}
	\label{fig:dominance_a4}
\end{figure}

The set $D_{n}^{A}$ comes naturally equipped with a nontrivial automorphism $\psi$, the \alert{reflection map}, which corresponds to a reflection of the lattice path about the diagonal $y=n-x$.  In terms of Dyck words, this automorphism can be expressed as follows.  Let $\pf\in D_{n}^{A}$ have associated Dyck word $w_{\pf}$.  We construct a new word $w'$ from $w_{\pf}$ by simultaneously replacing each $u$ by an $r$, and each $r$ by an $u$.  Subsequently, we construct a word $w''$ from $w'$, by sending the $i\th$ letter of $w'_{\pf}$ to the $(n-i+1)\st$ letter of $w''_{\pf}$.  It is straightforward to check that $w''$ is the Dyck word of some path $\psi(\pf)\in D_{n}^{A}$.  See Figure~\ref{fig:automorphism_a} for an illustration.

%
%

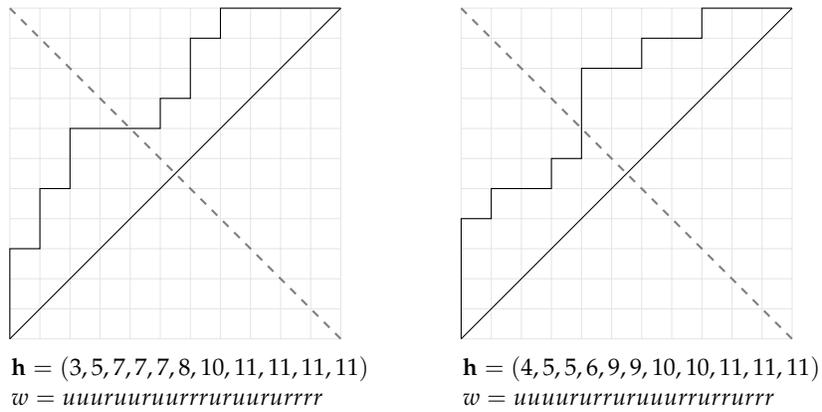
\begin{figure}
	\centering
	\begin{tikzpicture}\small
		\draw(1,1) node{\begin{tikzpicture}
				\def\x{1};
				\def\y{1};
				\draw[white!80!gray](0,0) grid[step=.4] (4.4,4.4);
				\draw[gray,dashed,thick](0,4.4) -- (4.4,0);
				\draw(0,0) -- (4.4,4.4);
				\draw(0,0) -- (0,1.2) -- (.4,1.2) -- (.4,2) -- (.8,2) -- (.8,2.8) -- (2,2.8) -- (2,3.2) -- (2.4,3.2) -- (2.4,4) -- (2.8,4) -- (2.8,4.4) -- (4.4,4.4);
				\draw(2.4,-.6) node{\begin{tabular}{l}
					$\hh = (3,5,7,7,7,8,10,11,11,11,11)$\\
					$w = uuuruuruurrruruururrrr$\\
					\end{tabular}};
			\end{tikzpicture}};
		
		\draw(7,1) node{\begin{tikzpicture}
				\def\x{1};
				\def\y{1};
				\draw[white!80!gray](0,0) grid[step=.4] (4.4,4.4);
				\draw[gray,dashed,thick](0,4.4) -- (4.4,0);
				\draw(0,0) -- (4.4,4.4);
				\draw(0,0) -- (0,1.6) -- (.4,1.6) -- (.4,2) -- (1.2,2) -- (1.2,2.4) -- (1.6,2.4) -- (1.6,3.6) -- (2.4,3.6) -- (2.4,4) -- (3.2,4) -- (3.2,4.4) -- (4.4,4.4);
				\draw(2.4,-.6) node{\begin{tabular}{l}
					$\hh = (4,5,5,6,9,9,10,10,11,11,11)$\\
					$w = uuuururruruuurrurrurrr$\\
					\end{tabular}};
			\end{tikzpicture}};
	\end{tikzpicture}
	\caption{Illustration of the reflection map $\psi$.  The right path is the image of the left path under $\psi$.}
	\label{fig:automorphism_a}
\end{figure}

\begin{lemma}\label{lem:involution_a_height}
	Let $\pf\in D_{n}^{A}$ have height sequence $\hh_{\pf}=(h_{1},h_{2},\ldots,h_{n})$.  The Dyck path $\psi(\pf)$ has height sequence $\hh_{\psi(\pf)}=(h'_{1},h'_{2},\ldots,h'_{n})$, given by
	\begin{displaymath}
		h'_{n-h_{i}+1} = h'_{n-h_{i}+2} = \cdots = h_{n-h_{i-1}} = n-i+1,
	\end{displaymath}
	for any $i\in[n]$ with $h_{i}>h_{i-1}$, and where we set $h_{0}=0$.  
\end{lemma}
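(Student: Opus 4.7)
The plan is to prove the lemma by tracking the right-steps of $\psi(\pf)$ geometrically. The map $\psi$ is the composition of the reflection $\rho\colon(x,y)\mapsto(n-y,n-x)$ in the line $y=n-x$ with a reversal of the direction of traversal; because $\rho$ sends vertical segments to horizontal ones and vice versa, the up-steps of $\pf$ become the right-steps of $\psi(\pf)$. The bulk of the work is to compute where these right-steps land and in what order.

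First I would rewrite $\pf$ column-by-column using $\hh_\pf$: for each $i\in[n]$ the path consists of $h_i - h_{i-1}$ consecutive up-steps at abscissa $i-1$ (rising from $(i-1,h_{i-1})$ to $(i-1,h_i)$), followed by the $i$-th right-step from $(i-1,h_i)$ to $(i,h_i)$, with the convention $h_0=0$. A single up-step $(i-1,h)\to(i-1,h+1)$ has $\rho$-image with endpoints $(n-h,n-i+1)$ and $(n-h-1,n-i+1)$, so after reversing the direction of traversal it becomes a right-step of $\psi(\pf)$ from $(n-h-1,n-i+1)$ to $(n-h,n-i+1)$. Consequently the block of $h_i-h_{i-1}$ up-steps at abscissa $i-1$ in $\pf$ is turned into a block of $h_i-h_{i-1}$ right-steps of $\psi(\pf)$, all at height $n-i+1$, collectively spanning $x\in[n-h_i,\,n-h_{i-1}]$.

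Next I would sort out the relative order of these blocks in $\psi(\pf)$. Since $\psi$ reverses the whole word, the block for $i=n$ appears first in $\psi(\pf)$, then the block for $i=n-1$, and so on down; within a single block, the top-most up-step of $\pf$ is the latest of its block in $\pf$'s traversal, hence becomes the leftmost right-step of its block in $\psi(\pf)$. Reading off the height sequence then amounts to observing that the $k$-th right-step of $\psi(\pf)$ lands at abscissa $k$, so every $k$ in the range $[n-h_i+1,\,n-h_{i-1}]$ satisfies $h'_k = n-i+1$. Indices $i$ with $h_i = h_{i-1}$ correspond to empty blocks and are simply skipped, which explains the hypothesis $h_i > h_{i-1}$.

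The main obstacle I anticipate is the careful bookkeeping of the reversal, in particular checking that as $i$ runs over the indices with $h_i>h_{i-1}$ the ranges $[n-h_i+1,\,n-h_{i-1}]$ are pairwise disjoint and cover $[1,n]$ exactly once. This follows from the telescoping identity $\sum(h_i-h_{i-1}) = h_n - h_0 = n$ together with the observation that if $i'>i$ is the next index after $i$ with $h_{i'}>h_{i'-1}$, then $h_{i'-1}=h_i$, so block $i'$ ends at $n-h_i$, which is exactly one less than $n-h_i+1$, the start of block $i$.
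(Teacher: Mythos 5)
Your argument is correct and is essentially the paper's proof recast in geometric rather than word-level language: the paper tracks the $c=h_{i}-h_{i-1}$ letters $u$ lying between the $(i-1)$-st and $i$-th occurrence of $r$ in $w_{\pf}$ and observes that, after swapping letters and reversing, they become the $(n-h_{i}+1)$-st through $(n-h_{i-1})$-th occurrences of $r$ in $w_{\psi(\pf)}$, each preceded by exactly $n-i+1$ letters $u$ --- which is precisely your block computation under the reflection $(x,y)\mapsto(n-y,n-x)$. Your closing verification that the ranges $[n-h_{i}+1,\,n-h_{i-1}]$ tile $[n]$ is a small extra check that the paper leaves implicit, but it does not change the nature of the argument.
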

\begin{proof}
	Let $\pf\in D_{n}^{A}$ have height sequence $\hh_{\pf}=(h_{1},h_{2},\ldots,h_{n})$, and pick $i\in[n]$ with $h_{i}>h_{i-1}$.  More precisely, say that $h_{i}-h_{i-1}=c>0$.  By definition this means that in $w_{\pf}$ there are exactly $c$-many letters $u$ between the $(i-1)\st$ and the $i\th$ occurrence of the letter $r$.  More precisely, these are the $(h_{i-1}+1)\st,(h_{i-1}+2)\nd,\ldots,h_{i}\th$ letters $u$ in $w_{\pf}$.
	
	Hence, by construction, in $w_{\psi(\pf)}$ occur precisely $c$ consecutive $r$'s between the $(n-i+1)\st$ and $(n-i+2)\nd$ occurrence of the letter $u$.  (If $i=1$, then the previous is to be read as ``after the $n\th$ occurrence of the letter $u$'').  More precisely, these are the $(n-h_{i}+1)\st,(n-h_{i}+2)\nd,\ldots,(n-h_{i-1})\th$ letters $r$ in $w_{\psi(\pf)}$, which implies that $h'_{n-h_{i}+1}=h'_{n-h_{i}+2}=\cdots=h'_{n-h_{i-1}}=n-i+1$, which concludes the proof.
\end{proof}

\begin{lemma}\label{lem:involution_a_automorphism}
	The map $\psi$ is a lattice automorphism of $\DD_{n}^{A}$, \ie $\pf,\pf'\in D_{n}^{A}$ satisfy $\pf\leq_{D}\pf'$ if and only if $\psi(\pf)\leq_{D}\psi(\pf')$.  	
\end{lemma}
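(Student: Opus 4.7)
The plan is to translate the order $\leq_D$ into the prefix-counting criterion of Lemma~\ref{lem:dominance_words_a}, and then observe that the reverse-and-swap operation defining $\psi$ converts prefix statistics of $w_{\pf}$ into suffix statistics of $w_{\pf}$ in a way that preserves the order.

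First I would fix $\pf,\pf'\in D_{n}^{A}$ and unfold $\pf\leq_{D}\pf'$ via Lemma~\ref{lem:dominance_words_a}: for every $k\in\{0,1,\ldots,2n\}$, the length-$k$ prefix of $w_{\pf}$ contains at least as many $r$'s as the length-$k$ prefix of $w_{\pf'}$. Since a length-$k$ prefix contains exactly $k$ letters, this is equivalent to saying that the length-$k$ prefix of $w_{\pf}$ contains at most as many $u$'s as that of $w_{\pf'}$. Using that $w_{\pf}$ and $w_{\pf'}$ each contain exactly $n$ letters $u$ in total, this in turn rephrases as: the length-$(2n-k)$ suffix of $w_{\pf}$ contains at least as many $u$'s as the corresponding suffix of $w_{\pf'}$.

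Next I would recall the construction of $\psi$: the word $w_{\psi(\pf)}$ arises from $w_{\pf}$ by swapping $u\leftrightarrow r$ and then reversing. Under this operation, the length-$j$ prefix of $w_{\psi(\pf)}$ is precisely the reversal and letter-swap of the length-$j$ suffix of $w_{\pf}$. In particular, the number of $r$'s in the length-$j$ prefix of $w_{\psi(\pf)}$ equals the number of $u$'s in the length-$j$ suffix of $w_{\pf}$. Substituting $j=2n-k$ into the reformulation obtained above, this shows that $\pf\leq_{D}\pf'$ is equivalent to: for every $j\in\{0,1,\ldots,2n\}$, the length-$j$ prefix of $w_{\psi(\pf)}$ contains at least as many $r$'s as that of $w_{\psi(\pf')}$. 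By Lemma~\ref{lem:dominance_words_a} applied to $\psi(\pf)$ and $\psi(\pf')$, this is exactly $\psi(\pf)\leq_{D}\psi(\pf')$.

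The chain of equivalences above handles both directions of the ``if and only if'' simultaneously; alternatively, one could prove just the forward direction and deduce the converse from $\psi\circ\psi=\text{id}$, which is established in Lemma~\ref{lem:involution_a_welldefined}. The main subtlety is purely bookkeeping, namely keeping careful track of whether one counts $u$'s versus $r$'s and prefixes versus suffixes of complementary lengths; there is no genuine obstacle. A purely height-sequence-based proof via Lemma~\ref{lem:involution_a_height} is also possible but would require a case analysis over the ``plateaus'' of the height sequences, so I prefer the Dyck-word approach sketched above.
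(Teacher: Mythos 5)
Your proposal is correct and follows essentially the same route as the paper: both reduce $\leq_{D}$ to the prefix $r$-count criterion of Lemma~\ref{lem:dominance_words_a} and then use the definition of $\psi$ (swap $u\leftrightarrow r$ and reverse), the paper handling the converse via the involution property while you run the equivalences in both directions. The only difference is that you spell out the prefix-versus-suffix bookkeeping that the paper leaves implicit in the phrase ``in view of the construction,'' and that bookkeeping is accurate.
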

\begin{proof}
	Suppose that $\pf\leq_{D}\pf'$.  Lemma~\ref{lem:dominance_words_a} implies that in every prefix of $\pf$ the number of $r$'s is at least as big as the number of $r$'s in the prefix of the same length in $w_{\pf'}$.  This implies by construction that the same is true for $\psi(\pf)$ and $\psi(\pf')$, which yields $\psi(\pf)\leq_{D}\psi(\pf')$.  Since $\psi$ is an involution, the converse follows.
\end{proof}

\subsubsection{Type $B$}
	\label{sec:dyck_paths_b}
By definition, a Dyck path of type $B$ having semilength $n$ is a path that stays weakly above the line $x=y$, and consists of $2n$ steps.  Observe that the definition of the reflection map $\psi$ in the last section does not require its input to be a Dyck path of type $A$, in fact, we can apply it to any word over the alphabet $\{u,r\}$.  It is easy to see that in general for $\pf\in D_{n}^{B}$, the image $\psi(\pf)$ need not be a Dyck path again.  The next lemma claims, however, that the concatenation of $\pf\in D_{n}^{B}$ and $\psi(\pf)$ is in $D_{2n}^{A}$.  

\begin{lemma}\label{lem:involution_concatenation_b}
	For any $\pf\in D_{n}^{B}$ the concatenation of $\pf$ and $\psi(\pf)$ is in $D_{2n}^{A}$.
\end{lemma}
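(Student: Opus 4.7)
The plan is to work directly with the Dyck words, interpreting the construction in Section~\ref{sec:dyck_paths_a} of $\psi$ (swap $u \leftrightarrow r$, then reverse the word) as applying to an arbitrary word in $\{u,r\}^{2n}$, as the paper implicitly does when it speaks of $\psi(\pf)$ for $\pf \in D_n^B$. Writing $w := w_{\pf}\cdot w_{\psi(\pf)}$, my first step is to verify the global letter counts: if $w_{\pf}$ contains $k$ letters $u$ and $2n-k$ letters $r$, then $w_{\psi(\pf)}$ contains $2n-k$ letters $u$ and $k$ letters $r$, so $w$ has length $4n$ with exactly $2n$ letters $u$ and $2n$ letters $r$, matching the requirement for a Dyck word in $D_{2n}^A$.

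Next I verify the prefix condition that every prefix of $w$ contains at least as many $u$'s as $r$'s. For a prefix of length at most $2n$, this is immediate since $w_{\pf}$ is itself a type-$B$ Dyck word. The only nontrivial case is a prefix of length $2n+j$ with $1 \le j \le 2n$, consisting of all of $w_{\pf}$ together with the first $j$ letters of $w_{\psi(\pf)}$.

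The key observation is that, by the swap-and-reverse definition of $\psi$, the first $j$ letters of $w_{\psi(\pf)}$ are obtained from the last $j$ letters of $w_{\pf}$ by swapping $u \leftrightarrow r$ and reversing. So if the last $j$ letters of $w_{\pf}$ contain $a$ letters $u$ (and $j-a$ letters $r$), then the first $j$ letters of $w_{\psi(\pf)}$ contain $j-a$ letters $u$ and $a$ letters $r$. The prefix of length $2n+j$ therefore has $k+(j-a)$ letters $u$ and $(2n-k)+a$ letters $r$, so the Dyck condition reduces to the inequality $2k - 2a + j \ge 2n$.

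Finally I observe that this is precisely the type-$B$ Dyck condition on $w_{\pf}$ applied to the complementary prefix of length $2n - j$: that prefix has $k - a$ letters $u$ and $(2n-j)-(k-a) = 2n-j-k+a$ letters $r$, and the requirement $k - a \ge 2n - j - k + a$ rearranges to the same inequality $2k - 2a + j \ge 2n$. Since $w_{\pf}$ is a type-$B$ Dyck word by hypothesis, this holds, completing the verification. The only conceptual step is thus the bookkeeping duality ``suffix of length $j$ of $w_{\pf}$ $\longleftrightarrow$ prefix of length $j$ of $w_{\psi(\pf)}$'' induced by $\psi$, which turns a long-prefix condition on the concatenation into a short-prefix condition on $w_{\pf}$; once this is recognised, there is no further obstacle.
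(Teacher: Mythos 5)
Your proposal is correct and follows essentially the same route as the paper: both reduce the condition on a prefix of length $2n+j$ of the concatenation, via the swap-and-reverse correspondence between the first $j$ letters of $w_{\psi(\pf)}$ and the last $j$ letters of $w_{\pf}$, to the type-$B$ Dyck condition on the complementary prefix of $w_{\pf}$ of length $2n-j$. The only difference is presentational: you verify all long prefixes directly, while the paper argues by contradiction with a minimal failing prefix, but the underlying letter-counting identity is the same.
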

\begin{proof}
	Observe that if $w_{\pf}$ has $k$ letters equal to $r$, then $\pf$ ends at $(k,2n-k)$.  By definition $\pf$ does not cross the diagonal $x=y$, and therefore the concatenation of $\pf$ and $\psi(\pf)$ does not cross this diagonal either.  It thus corresponds to a path in $D_{2n}^{A}$. 
\end{proof}

\begin{corollary}\label{cor:dyck_b_equalizer}
	The sets $D_{n}^{B}$ and $\bigl\{\pf\in D_{2n}^{A}\mid\psi(\pf)=\pf\bigr\}$ are in bijection. 
\end{corollary}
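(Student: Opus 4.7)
The plan is to exhibit the explicit bijection suggested by Lemma~\ref{lem:involution_concatenation_b}. Define $\Phi:D_{n}^{B}\to D_{2n}^{A}$ by sending $\pf$ to the concatenation $\pf\cdot\psi(\pf)$. Lemma~\ref{lem:involution_concatenation_b} guarantees that $\Phi$ is well defined, so my first step is to check that $\Phi(\pf)$ actually lies in the fixed-point set $\bigl\{\qf\in D_{2n}^{A}\mid\psi(\qf)=\qf\bigr\}$. Writing $w_{\Phi(\pf)}=w_{\pf}\,w''$, where $w''$ is obtained from $w_{\pf}$ by the swap-and-reverse operation defining $\psi$, applying $\psi$ to the concatenation reverses the whole word and swaps letters. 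Since swap-and-reverse sends $w_{\pf}\,w''$ to $\bigl(\text{swap-reverse}(w'')\bigr)\bigl(\text{swap-reverse}(w_{\pf})\bigr)=w_{\pf}\,w''$, using the fact that swap-reverse is an involution, we obtain $\psi(\Phi(\pf))=\Phi(\pf)$.

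Next I would construct the inverse. Given $\qf\in D_{2n}^{A}$ with $\psi(\qf)=\qf$, let $w_{\qf}=w_{1}w_{2}$ with $|w_{1}|=|w_{2}|=2n$, and let $\pf$ be the lattice path whose Dyck word is $w_{1}$. Since every prefix of $w_{\qf}$ has at least as many $u$'s as $r$'s, the same holds for $w_{1}$, so $\pf\in D_{n}^{B}$. The fixed-point condition reads $\text{swap-reverse}(w_{2})\cdot\text{swap-reverse}(w_{1})=w_{1}w_{2}$, and comparing the first $2n$ letters gives $w_{2}=\text{swap-reverse}(w_{1})$, which is exactly the Dyck word of $\psi(\pf)$. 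Hence $\qf=\Phi(\pf)$, so $\Phi$ is surjective onto the fixed-point set; injectivity is clear since $\pf$ is recovered as the first $2n$ steps of $\Phi(\pf)$.

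The only mildly subtle point is verifying that the two halves of a $\psi$-fixed path really match up as $\pf$ and $\psi(\pf)$; this reduces entirely to the fact that $\psi$, described on Dyck words as swap-and-reverse, distributes over concatenation in a way that swaps the two halves. Once that observation is in hand, the corollary is immediate.
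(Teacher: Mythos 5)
Your proposal is correct and follows essentially the same route as the paper: the map $\pf\mapsto\pf\cdot\psi(\pf)$ (well defined by Lemma~\ref{lem:involution_concatenation_b}) with inverse given by restricting a $\psi$-fixed path in $D_{2n}^{A}$ to its first $2n$ steps. You merely spell out, via the swap-and-reverse computation on Dyck words, the fixed-point and mutual-inverse checks that the paper declares immediate or straightforward.
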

\begin{proof}
	Lemma~\ref{lem:involution_concatenation_b} describes the map that sends each $\pf\in D_{n}^{B}$ to some $\qf\in D_{2n}^{A}$, and it is immediate from the construction that in this case $\psi(\qf)=\qf$.  Conversely, if we have $\qf\in D_{2n}^{A}$ with $\psi(\qf)=\qf$, then this path is completely determined by its first $2n$ steps, which can be regarded as a path $\pf\in D_{n}^{B}$ in its own right.  That these two maps are mutually inverse is straightforward to verify.
\end{proof}

We call the Dyck paths in $\bigl\{\pf\in D_{2n}^{A}\mid\psi(\pf)=\pf\bigr\}$ \alert{centrally symmetric}.  Now we can use the connection described in Corollary~\ref{cor:dyck_b_equalizer} to define a height sequence for Dyck paths of type $B$.  Let $\pf\in D_{2n}^{A}$ have $\psi(\pf)=\pf$, and let $\qf\in D_{n}^{B}$ denote the subpath consisting of the first $2n$ steps.  Let $\hh_{\pf}=(h_{1},h_{2},\ldots,h_{2n})$, and define $k=\min\bigl\{i\in[n]\mid i+h_{i}\geq 2n\bigr\}$.  The sequence $\hh_{\qf}=(h_{1},h_{2},\ldots,h_{k})$ is the \alert{height sequence} of $\qf$.  We observe that $k+h_{k}=2n$ if $w_{\qf}$ ends with a right-step, and $k+h_{k}=2n+1$ if $w_{\qf}$ ends with an up-step.  It is clear that $h_{i}$ is precisely the number of up-steps occuring before the $i\th$ occurrence of the letter $r$ in $w_{\qf}$, with the exception that if $w_{\qf}$ ends with the letter $u$, then $h_{k}$ is the total number of $u$'s occurring in $w_{\qf}$.  Like in the type $A$ case, the entry $h_{i}$ in $\hh_{\pf}$ determines precisely the height of $\pf$ at abscissa $i-1$. 

\begin{lemma}\label{lem:height_sequence_b}
	If $\pf\in D_{n}^{B}$, then $\hh_{\pf}=(h_{1},h_{2},\ldots,h_{k})$ for $k\in[n]$ satisfies
	\begin{displaymath}
		h_{k} = \begin{cases}2n-k+1, & \text{if}\;w_{\pf}\;\text{ends with}\;u\\ 2n-k, & \text{if}\;w_{\pf}\;\text{ends with}\;r,\end{cases}
	\end{displaymath}
	as well as $h_{1}\leq h_{2}\leq\cdots\leq h_{k-1}\leq 2n-k$ and $h_{i}\geq i$ for $i\in[k]$.  Conversely, each such sequence uniquely determines a Dyck path in $D_{n}^{B}$.  
\end{lemma}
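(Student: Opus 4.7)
The proof will follow the same outline as Lemma~\ref{lem:height_sequence_a}, but I must distinguish two cases according to whether the last letter of $w_{\pf}$ is $r$ or $u$. By the observation preceding the lemma, in either case the first $k-1$ entries $h_{1},\ldots,h_{k-1}$ count $u$'s preceding the corresponding $r$'s of $w_{\pf}$, while the final entry $h_{k}$ either continues this pattern (when $w_{\pf}$ ends in $r$) or records the total number of $u$'s in $w_{\pf}$ (when $w_{\pf}$ ends in $u$).

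For the forward direction I would first pin down $h_{k}$ directly. If $w_{\pf}$ ends in $r$, its $k\th$ and final $r$ sits at position $2n$, leaving $2n-k$ preceding $u$'s and hence $h_{k}=2n-k$. If $w_{\pf}$ ends in $u$, then $w_{\pf}$ contains exactly $k-1$ letters $r$ and therefore $2n-(k-1)=2n-k+1$ letters $u$, so $h_{k}=2n-k+1$. The chain $h_{1}\leq\cdots\leq h_{k-1}$ is copied verbatim from the monotonicity argument of Lemma~\ref{lem:height_sequence_a}. For the bound $h_{k-1}\leq 2n-k$, I would observe that in both endpoint cases the $(k-1)\st$ $r$ occupies a position at most $2n-1$ in $w_{\pf}$---either because the $k\th$ $r$ already fills position $2n$, or because position $2n$ is a $u$---giving $h_{k-1}\leq(2n-1)-(k-1)=2n-k$. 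Finally, $h_{i}\geq i$ for $i<k$ (and for $i=k$ when $w_{\pf}$ ends in $r$) is the usual Dyck-prefix inequality applied at the $i\th$ $r$; when $i=k$ and $w_{\pf}$ ends in $u$ it amounts to $2n-k+1\geq k$, which holds since a $w_{\pf}$ ending in $u$ has at most $n-1$ letters $r$, forcing $k\leq n$.

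For the converse, starting from a sequence $\hh$ satisfying the stated conditions, I would set
\[
	w_{\hh} \;=\; u^{h_{1}}\, r\, u^{h_{2}-h_{1}}\, r\,\cdots\, r\, u^{h_{k-1}-h_{k-2}}\, r\, u^{h_{k}-h_{k-1}},
\]
appending a trailing $r$ exactly when $h_{k}=2n-k$. The monotonicity of $\hh$ ensures that every intermediate block has nonnegative length, and the final $u$-block in the ``no trailing $r$'' case is nonempty because there $h_{k}-h_{k-1}\geq(2n-k+1)-(2n-k)=1$. A direct tally gives length $2n$ in either case, the inequality $h_{i}\geq i$ forces every prefix to contain at least as many $u$'s as $r$'s, so $w_{\hh}\in D_{n}^{B}$, and reading off its heights recovers $\hh$, delivering both existence and uniqueness.

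The only genuine obstacle is juggling the two boundary-case variants of the construction and verifying the sharper bound $h_{k-1}\leq 2n-k$ in the ``ends in $u$'' case; everything else amounts to routine counting and the Dyck-prefix condition.
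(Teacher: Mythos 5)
Your proof is correct and follows essentially the same route as the paper: the value of $h_{k}$ is obtained by counting letters according to whether $w_{\pf}$ ends in $u$ or $r$, the remaining inequalities are checked as in the type-$A$ case, and the converse is established by the same two-case block construction $u^{h_{1}}r\,u^{h_{2}-h_{1}}r\cdots$ with a trailing $r$ precisely when $h_{k}=2n-k$. You in fact spell out a few details (the bound $h_{k-1}\leq 2n-k$ and the inequality $h_{k}\geq k$ in the ``ends in $u$'' case) that the paper dispatches with ``analogously to Lemma~\ref{lem:height_sequence_a}.''
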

\begin{proof}
	If $w_{\pf}$ is a Dyck word of type $B$, then it is straightforward to verify that $\hh_{\pf}$ satisfies the given conditions.
	
	\smallskip
	
	Conversely, let $\hh=(h_{1},h_{2},\ldots,h_{k})$ satisfy the given conditions, where we put $h_{0}=0$ in the case $k=1$.  If $h_{k}=2n-k+1$, then we define 
	\begin{displaymath}
		w_{\hh} = \underbrace{uu\cdots u}_{h_{1}}r\underbrace{uu\cdots u}_{h_{2}-h_{1}}r\cdots r\underbrace{uu\cdots u}_{h_{k}-h_{k-1}},
	\end{displaymath}
	and if $h_{k}=2n-k$, then we define 
	\begin{displaymath}
		w_{\hh} = \underbrace{uu\cdots u}_{h_{1}}r\underbrace{uu\cdots u}_{h_{2}-h_{1}}r\cdots r\underbrace{uu\cdots u}_{h_{k}-h_{k-1}}r.
	\end{displaymath}
	In both cases, it is straightforward to verify that $w_{\hh}$ is a Dyck word of type $B$.
\end{proof}

The next lemma describes how to derive the height sequence of the centrally symmetric Dyck path $\pf\in D_{2n}^{A}$ from a Dyck path $\qf\in D_{n}^{B}$.  

\begin{lemma}\label{lem:dyck_b_automorphism_height}
	Let $\qf\in D_{n}^{B}$ with height sequence $\hh_{\qf}=(h_{1},h_{2},\ldots,h_{k})$.  The corresponding centrally symmetric Dyck path $\pf\in D_{2n}^{A}$ has height sequence $\hh_{\pf}=(h'_{1},h'_{2},\ldots,h'_{2n})$, where $h'_{i}=h_{i}$ for $i\leq k$, and 
	\begin{displaymath}
		h'_{2n-h_{i}+1} = h'_{2n-h_{i}+2} = \cdots = h'_{2n-h_{i-1}} = 2n-i+1,
	\end{displaymath}
	for $i\in[k]$ with $h_{i}>h_{i-1}$ and $h_{0}=0$.  
\end{lemma}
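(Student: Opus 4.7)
The plan is to exploit the description of the centrally symmetric path $\pf\in D_{2n}^{A}$ as the concatenation of $\qf$ with its swap-reverse, as constructed in Lemma~\ref{lem:involution_concatenation_b} and Corollary~\ref{cor:dyck_b_equalizer}. Since $\psi(\pf)=\pf$, the height sequence $\hh_\pf$ is determined in two complementary ways: the first $k$ entries are read directly from $w_\qf$, while the remaining entries are forced by the central symmetry via Lemma~\ref{lem:involution_a_height}.

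First I would verify that $h'_i=h_i$ for all $i\in[k]$. By the construction in Lemma~\ref{lem:involution_concatenation_b}, $w_\pf=w_\qf w''$, so the $i\th$ occurrence of $r$ in $w_\pf$ for $i\le k-1$ is exactly the $i\th$ occurrence of $r$ in $w_\qf$, and counting preceding $u$'s gives $h'_i=h_i$ immediately from the definition of the type-$B$ height sequence. The index $i=k$ splits into two subcases. If $w_\qf$ ends with $r$, then the $k\th$ occurrence of $r$ is still inside $w_\qf$ and the count gives $h'_k=h_k=2n-k$. If $w_\qf$ ends with $u$, then $w_\qf$ contains only $k-1$ letters $r$, and by the description of $w''$ (swap then reverse) the first letter of $w''$ is $r$; thus the $k\th$ occurrence of $r$ in $w_\pf$ sits at position $2n+1$, preceded by all $2n-k+1$ letters $u$ of $w_\qf$, matching the convention $h_k=2n-k+1$ from Lemma~\ref{lem:height_sequence_b}.

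For the remaining entries, I would apply Lemma~\ref{lem:involution_a_height} with $n$ replaced by $2n$ to the path $\pf\in D_{2n}^{A}$. Since $\psi(\pf)=\pf$, we have $\hh_{\psi(\pf)}=\hh_\pf$, so the lemma reads as a self-referential identity on $\hh_\pf$: for every index $i\in[2n]$ with $h'_i>h'_{i-1}$ one has $h'_{2n-h'_i+1}=\cdots=h'_{2n-h'_{i-1}}=2n-i+1$. Plugging in the values $h'_i=h_i$ for $i\in[k]$ obtained in the previous step yields exactly the claimed formula. It remains only to observe that indices $i\in[k]$ with $h_i>h_{i-1}$ already suffice: by Lemma~\ref{lem:height_sequence_b} we have $h_i\le 2n-k$ for $i<k$, so the index ranges $[2n-h_i+1,\,2n-h_{i-1}]$ for $i<k$ live strictly above $k$ and are disjoint from the initial segment; the boundary case $i=k$ either starts at $k+1$ or overlaps with the already-determined value $h'_k=2n-k+1$, and the two assignments agree.

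The only genuinely delicate point is the case analysis at the junction between $w_\qf$ and $w''$, where the parity-like distinction between ``ends with $u$'' and ``ends with $r$'' interacts with the two cases of the value $h_k$ in Lemma~\ref{lem:height_sequence_b}; once this bookkeeping is handled, the rest of the proof is a direct specialization of Lemma~\ref{lem:involution_a_height} to the fixed point $\psi(\pf)=\pf$.
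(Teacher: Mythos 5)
Your proof is correct, and it takes a mildly different route from the paper. The paper's proof of this lemma is just the remark that one repeats the word-level counting from the proof of Lemma~\ref{lem:involution_a_height} (blocks of $u$'s between consecutive $r$'s in $w_{\qf}$ become blocks of $r$'s in the second half of $w_{\pf}$ under swap-and-reverse), whereas you use Lemma~\ref{lem:involution_a_height} as a black box, applied to the centrally symmetric path $\pf\in D_{2n}^{A}$ itself: since $\psi(\pf)=\pf$, that lemma becomes a self-referential identity on $\hh_{\pf}$, and substituting the already-verified initial segment $h'_{i}=h_{i}$ for $i\leq k$ gives exactly the claimed equalities $h'_{2n-h_{i}+1}=\cdots=h'_{2n-h_{i-1}}=2n-i+1$. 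Note that the initial segment is essentially built into the paper's definition of $\hh_{\qf}$ (it is the truncation of $\hh_{\pf}$, together with the remark before Lemma~\ref{lem:height_sequence_b} that $h_{i}$ counts the $u$'s before the $i$-th $r$ in $w_{\qf}$, with the stated exception at $i=k$), so your junction analysis of $w_{\qf}$ versus $w''$ re-proves something the paper takes as given; this is harmless and makes your argument self-contained. What your route buys is that no counting argument needs to be redone: the only real work is the boundary bookkeeping (the value $h'_{k}=2n-k+1$ when $w_{\qf}$ ends in $u$, and the observation that the ranges $[2n-h_{i}+1,\,2n-h_{i-1}]$ lie above $k$, overlapping at most in the consistent entry at index $k$). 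What the paper's route buys is a single uniform word-level argument that does not require quoting the type-$A$ statement at the doubled semilength.
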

\begin{proof}
	This is essentially the same proof as the one of Lemma~\ref{lem:involution_a_height}.  
\end{proof}

\begin{remark}\label{rem:dyck_b_reduction}
	If $\pf\in D_{n}^{A}\subseteq D_{n}^{B}$, then the associated Dyck word $w_{\pf}$ ends with the letter $r$, and its height sequence has precisely $n$ entries.  In this case, the conditions in Lemma~\ref{lem:height_sequence_b} coincide with those in Lemma~\ref{lem:height_sequence_a}.
\end{remark}

\begin{lemma}\label{lem:dyck_b_comparison}
	Let $\pf,\pf'\in D_{n}^{B}$ with associated height sequences $\hh_{\pf}=(h_{1},h_{2},\ldots,h_{k})$ and $\hh_{\pf'}=(h'_{1},h'_{2},\ldots,h'_{k'})$, respectively.  If $k<k'$, then $h_{k}>h'_{k}$.
\end{lemma}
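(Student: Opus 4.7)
The plan is to read off two sharp bounds from Lemma~\ref{lem:height_sequence_b}: a lower bound on the last entry $h_k$ of the shorter sequence, and an upper bound on $h'_k$ coming from the monotonicity of the longer sequence together with the constraint on its penultimate entry.

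First, applying Lemma~\ref{lem:height_sequence_b} to $\pf$, the last height $h_k$ equals either $2n-k$ (if $w_{\pf}$ ends in $r$) or $2n-k+1$ (if $w_{\pf}$ ends in $u$). In either case one has $h_k\geq 2n-k$. Next, for $\pf'$ the lemma also gives the bound $h'_1\leq h'_2\leq\cdots\leq h'_{k'-1}\leq 2n-k'$. Since by hypothesis $k<k'$, we have $k\leq k'-1$, so $h'_k\leq h'_{k'-1}\leq 2n-k'$.

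Combining these estimates,
\begin{displaymath}
	h_k \;\geq\; 2n-k \;\geq\; 2n-(k'-1) \;=\; (2n-k')+1 \;>\; 2n-k' \;\geq\; h'_k,
\end{displaymath}
which is the desired strict inequality $h_k>h'_k$.

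There is no real obstacle here: the whole statement is an arithmetical consequence of the two conditions packaged in Lemma~\ref{lem:height_sequence_b} (the final-entry formula and the penultimate-entry ceiling), so the only thing to be careful about is handling the two parities of endings ($w_{\pf}$ terminating in $u$ versus $r$) uniformly via the inequality $h_k\geq 2n-k$, which already absorbs both cases.
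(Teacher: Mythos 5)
Your proof is correct. It is not quite the argument the paper gives: the paper works directly on the Dyck words, observing that the word of $\pf'$ forces at least one letter to follow the $k\th$ occurrence of $r$ in the word of $\pf$, and reads off $h'_{k}\leq h_{k}-1$ from a letter count; you instead never look at the words again and extract everything arithmetically from Lemma~\ref{lem:height_sequence_b}, combining the final-entry formula $h_{k}\in\{2n-k,\,2n-k+1\}$ (hence $h_{k}\geq 2n-k$) with the chain $h'_{k}\leq h'_{k'-1}\leq 2n-k'$, which applies because $k\leq k'-1$, and the strict inequality then falls out of $k<k'$. Both arguments are one-liners, but yours has the merit of being a self-contained consequence of the characterization lemma: in particular it sidesteps the letter-counting step, which needs a little care in the paper's formulation (when one word ends in $u$ and the other in $r$ the two words may contain the same number of $r$'s, so the relevant point is really the position of the $k\th$ letter $r$, not a strict comparison of their total counts). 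Your chain also delivers the same quantitative conclusion $h_{k}\geq h'_{k}+1$ that the paper records as $h'_{k}\leq h_{k}-1$, so nothing is lost.
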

\begin{proof}
	Lemma~\ref{lem:height_sequence_b} implies that $h_{k}\in\{2n-k,2n-k+1\}$, and $h_{k'}\in\{2n-k',2n-k'+1\}$.  If $k<k'$, then we immediately get $2n-k+1>2n-k>2n-k'$ and $2n-k+1>2n-k'+1$, as well as $2n-k\geq 2n-k'+1$.  If $2n-k=2n-k'+1$, then both paths $\pf$ and $\pf'$ end at the same height, which forces $k=k'$, a contradiction. 
\end{proof}

\begin{figure}
	\centering
	\begin{tikzpicture}\small
		\def\x{1.5};
		\def\y{1.45};
		\draw(2*\x,1*\y) node(v1){\dyckBThree{0/.4/.4/.4/.4/.8/.8/.8/.8/1.2/1.2/1.2}{\cTwo}{.4}};
		\draw(1*\x,2*\y) node(v2){\dyckBThree{0/.4/0/.8/.4/.8/.8/.8/.8/1.2/1.2/1.2}{\cTwo}{.4}};
		\draw(2*\x,2*\y) node(v3){\dyckBThree{0/.4/.4/.4/.4/.8/.4/1.2/.8/1.2/1.2/1.2}{\cTwo}{.4}};
		\draw(3*\x,2*\y) node(v4){\dyckBThree{0/.4/.4/.4/.4/.8/.8/.8/.8/1.2/.8/1.6}{\cTwo}{.4}};
		\draw(1*\x,3*\y) node(v5){\dyckBThree{0/.4/0/.8/.4/.8/.4/1.2/.8/1.2/1.2/1.2}{\cOne}{.4}};
		\draw(2*\x,3*\y) node(v6){\dyckBThree{0/.4/0/.8/.4/.8/.8/.8/.8/1.2/.8/1.6}{\cTwo}{.4}};
		\draw(3*\x,3*\y) node(v7){\dyckBThree{0/.4/.4/.4/.4/.8/.4/1.2/.8/1.2/.8/1.6}{\cOne}{.4}};
		\draw(1*\x,4*\y) node(v8){\dyckBThree{0/.4/0/.8/0/1.2/.4/1.2/.8/1.2/1.2/1.2}{\cTwo}{.4}};
		\draw(2*\x,4*\y) node(v9){\dyckBThree{0/.4/0/.8/.4/.8/.4/1.2/.8/1.2/.8/1.6}{\cOne}{.4}};
		\draw(3*\x,4*\y) node(v10){\dyckBThree{0/.4/.4/.4/.4/.8/.4/1.2/.4/1.6/.8/1.6}{\cOne}{.4}};
		\draw(1*\x,5*\y) node(v11){\dyckBThree{0/.4/0/.8/0/1.2/.4/1.2/.8/1.2/.8/1.6}{\cOne}{.4}};
		\draw(2*\x,5*\y) node(v12){\dyckBThree{0/.4/0/.8/.4/.8/.4/1.2/.4/1.6/.8/1.6}{\cOne}{.4}};
		\draw(3*\x,5*\y) node(v13){\dyckBThree{0/.4/.4/.4/.4/.8/.4/1.2/.4/1.6/.4/2}{\cTwo}{.4}};
		\draw(1.5*\x,6*\y) node(v14){\dyckBThree{0/.4/0/.8/0/1.2/.4/1.2/.4/1.6/.8/1.6}{\cOne}{.4}};
		\draw(2.5*\x,6*\y) node(v15){\dyckBThree{0/.4/0/.8/.4/.8/.4/1.2/.4/1.6/.4/2}{\cOne}{.4}};
		\draw(1.5*\x,7*\y) node(v16){\dyckBThree{0/.4/0/.8/0/1.2/0/1.6/.4/1.6/.8/1.6}{\cOne}{.4}};
		\draw(2.5*\x,7*\y) node(v17){\dyckBThree{0/.4/0/.8/0/1.2/.4/1.2/.4/1.6/.4/2}{\cOne}{.4}};
		\draw(2*\x,8*\y) node(v18){\dyckBThree{0/.4/0/.8/0/1.2/0/1.6/.4/1.6/.4/2}{\cOne}{.4}};
		\draw(2*\x,9*\y) node(v19){\dyckBThree{0/.4/0/.8/0/1.2/0/1.6/0/2/.4/2}{\cOne}{.4}};
		\draw(2*\x,10*\y) node(v20){\dyckBThree{0/.4/0/.8/0/1.2/0/1.6/0/2/0/2.4}{\cTwo}{.4}};
		\draw(v1) -- (v2);
		\draw(v1) -- (v3);
		\draw(v1) -- (v4);
		\draw(v2) -- (v5);
		\draw(v2) -- (v6);
		\draw(v3) -- (v5);
		\draw(v3) -- (v7);
		\draw(v4) -- (v6);
		\draw(v4) -- (v7);
		\draw(v5) -- (v8);
		\draw(v5) -- (v9);
		\draw(v6) -- (v9);
		\draw(v7) -- (v9);
		\draw(v7) -- (v10);
		\draw(v8) -- (v11);
		\draw(v9) -- (v11);
		\draw(v9) -- (v12);
		\draw(v10) -- (v12);
		\draw(v10) -- (v13);
		\draw(v11) -- (v14);
		\draw(v12) -- (v14);
		\draw(v12) -- (v15);
		\draw(v13) -- (v15);
		\draw(v14) -- (v16);
		\draw(v14) -- (v17);
		\draw(v15) -- (v17);
		\draw(v16) -- (v18);
		\draw(v17) -- (v18);
		\draw(v18) -- (v19);
		\draw(v19) -- (v20);
	\end{tikzpicture}
	\caption{The lattice $\DD_{3}^{B}$.  The highlighted paths are regular.}
	\label{fig:dominance_b3}
\end{figure}

Let $\pf,\pf'\in D_{n}^{B}$ have associated height sequences $\hh_{\pf}=(h_{1},h_{2},\ldots,h_{k})$ and $\hh_{\pf'}=(h'_{1},h'_{2},\ldots,h'_{k'})$ for $k,k'\in[n]$.  Define $\pf\leq_{D}\pf'$ if and only if $k\geq k'$ and $h_{i}\leq h'_{i}$ for $i\in[k']$, and call this partial order the \alert{dominance order on $D_{n}^{B}$}.  We usually write $\DD_{n}^{B}$ for the poset $\bigl(D_{n}^{B},\leq_{D}\bigr)$.  Figure~\ref{fig:dominance_b3} shows $\DD_{3}^{B}$.  The following result extends Theorem~\ref{thm:distributive_a}.

\begin{theorem}\label{thm:distributive_b}
	For $n>0$ the poset $\DD_{n}^{B}$ is a distributive lattice.
\end{theorem}
\begin{proof}
	Let $\pf,\pf'\in D_{n}^{B}$ have height sequences $\hh_{\pf}=(h_{1},h_{2},\ldots,h_{k})$ and $\hh_{\pf'}=(h'_{1},h'_{2},\ldots,h'_{k'})$, and assume without loss of generality that $k\geq k'$.  It is straightforward to verify that their meet can be defined via the height sequence
	\begin{displaymath}
		\hh_{\pf\wedge_{D}\pf'} = \bigl(\min\{h_{1},h'_{1}\},\min\{h_{2},h'_{2}\},\ldots,\min\{h_{k'},h'_{k'}\},h_{k'+1},h_{k'+2},\ldots,h_{k}\bigr),
	\end{displaymath}
	and their join can be defined via the height sequence
	\begin{displaymath}
		\hh_{\pf\vee_{D}\pf'} = \bigl(\max\{h_{1},h'_{1}\},\max\{h_{2},h'_{2}\},\ldots,\max\{h_{k'},h'_{k'}\}\bigr).
	\end{displaymath}
	Since $\min$ and $\max$ are distributive, the result follows.
\end{proof}

In fact, we can strengthen Corollary~\ref{cor:dyck_b_equalizer} as follows.

\begin{lemma}\label{lem:dyck_b_equalizer_poset}
	The reflection map $\psi$ is a poset isomorphism from $\DD_{n}^{B}$ to $\Bigl(\bigl\{\qf\in D_{2n}^{A}\mid\psi(\qf)=\qf\bigr\},\leq_{D}\Bigr)$.  
\end{lemma}
\begin{proof}
	Let $\pf,\pf'\in D_{n}^{B}$.  If $\pf\leq_{D}\pf'$, then $\pf$ stays weakly below $\pf'$, and by construction the same is true for $\psi(\pf)$ and $\psi(\pf')$.  Let $\qf$ and $\qf'$ denote the concatenation of $\pf$ and $\psi(\pf)$, respectively $\pf'$ and $\psi(\pf')$.  We clearly have $\psi(\qf)=\qf$ and $\psi(\qf')=\qf'$, and the previous shows that $\qf$ stays weakly below $\qf'$, which yields $\qf\leq_{D}\qf'$.  The opposite direction is trivial.
\end{proof}

\subsection{Join-Prime Dyck Paths}
	\label{sec:join_prime_paths}
In general, many properties of a Heyting algebra can be understood by looking at the induced subposet of join-prime elements.  An element $p$ in a finite lattice $\LL=(L,\leq)$ is \alert{join-prime} if $p$ is not the least element, and if for any two elements $x,y\in L$ we have that $p\leq x\vee y$ implies $p\leq x$ or $p\leq y$.  Moreover, $p$ is \alert{join-irreducible} if it is not the least element, and for any $X\subseteq L$ with $p=\bigvee X$ it follows that $p\in X$.  In other words, join-irreducible elements in a finite lattice are precisely those elements with a unique lower cover.  We have the following straightforward result.

\begin{lemma}[\cite{balbes74distributive}*{Theorem~III.1.2(i)}]\label{lem:distributive_prime_irreducible}
	If $\LL$ is a finite distributive lattice, then an element is join-prime if and only if it is join-irreducible.
\end{lemma}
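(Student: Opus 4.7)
The plan is to prove the biconditional in two directions, with distributivity only needed for one of them. Recall that both definitions implicitly exclude $\hat{0}$: a join-prime element is explicitly required to not be $\hat{0}$, while a join-irreducible element cannot be $\hat{0}$ because $\hat{0} = \bigvee\emptyset$ would force $\hat{0}\in\emptyset$, a contradiction.

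For the direction that join-prime implies join-irreducible, I would argue by induction on the cardinality of $X\subseteq L$ with $p=\bigvee X$. The case $|X|=1$ is immediate since $p\in X$. For $|X|\geq 2$, split $X=\{x\}\cup X'$ so that $p \leq x \vee \bigvee X'$; the join-prime property gives $p\leq x$ or $p\leq\bigvee X'$. In the first case, since $x\leq p$ as well (because $x\in X$ and $p=\bigvee X$), we get $p=x\in X$. In the second case, apply the induction hypothesis to the smaller set $X'$ — except we need $p=\bigvee X'$, not just $p\leq\bigvee X'$. This is fine because $\bigvee X'\leq \bigvee X=p$ already, so $p=\bigvee X'$, and induction finishes the argument. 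Notice that this direction never uses distributivity.

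For the direction that join-irreducible implies join-prime, this is exactly where distributivity enters. Suppose $p$ is join-irreducible and $p\leq x\vee y$. Then $p = p\wedge(x\vee y)$, and distributivity rewrites this as
\begin{equation*}
	p = (p\wedge x)\vee(p\wedge y).
\end{equation*}
Applying the join-irreducibility condition to the two-element set $\{p\wedge x, p\wedge y\}$, we conclude $p=p\wedge x$ or $p=p\wedge y$, that is, $p\leq x$ or $p\leq y$. Together with $p\neq\hat{0}$ (inherited from join-irreducibility), this shows $p$ is join-prime.

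I do not expect any real obstacle here. The only subtlety is being careful about the edge case $X=\emptyset$ (which the definitions handle by excluding $\hat{0}$) and making sure the induction step in the forward direction uses an equality $p=\bigvee X'$ rather than just an inequality, which follows automatically once $p\leq\bigvee X'$ is established. Everything else is a direct application of the distributive law and the definitions.
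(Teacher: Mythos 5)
Your proof is correct, and the direction ``join-irreducible $\Rightarrow$ join-prime'' is essentially identical to the paper's: meet with $x\vee y$, distribute, and apply irreducibility to the two-element join. The other direction is where you diverge. The paper argues by contraposition using the remark (stated after the definition, not proved) that join-irreducible elements of a finite lattice are exactly those with a unique lower cover: if $p$ is not join-irreducible it has two distinct lower covers $x,y$, whence $p=x\vee y$ but $p\not\leq x$ and $p\not\leq y$, so $p$ is not join-prime. You instead work directly from the subset definition, by induction on $\lvert X\rvert$ for $p=\bigvee X$, peeling off one element at a time and using primality plus the automatic inequalities $x\leq p$ and $\bigvee X'\leq p$ to force $p\in X$. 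Your route buys two things: it never invokes the unique-lower-cover characterization (which itself needs a small finiteness argument the paper leaves implicit), and it handles the $\hat{0}$ case cleanly, whereas the paper's contrapositive silently skips $p=\hat{0}$ (no lower covers there, but $\hat{0}$ is neither prime nor irreducible, so no harm). The paper's route buys brevity once the cover description of join-irreducibles is taken for granted, and it makes visible the covering-pair picture that is used later in the paper (e.g.\ in Lemma~\ref{lem:join_prime_b}). You are also right that distributivity is needed only for the irreducible-to-prime implication; the paper's proof has the same asymmetry, though it does not remark on it.
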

%

Let us now characterize the join-prime elements of $\DD_{n}^{B}$. 

\begin{lemma}\label{lem:join_prime_b}
	An element $\pf\in D_{n}^{B}$ with height sequence $\hh_{\pf}=(h_{1},h_{2},\ldots,h_{k})$ is join-prime if and only if there exists a unique index $i\in[k]$ such that $h_{i}>i$ and $h_{i}>h_{i-1}$, where we put $h_{0}=0$ if necessary. 
\end{lemma}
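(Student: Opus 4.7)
By Lemma~\ref{lem:distributive_prime_irreducible} and Theorem~\ref{thm:distributive_b}, the join-prime elements of $\DD_{n}^{B}$ coincide with the join-irreducible ones, which in a finite lattice are precisely those having a unique lower cover. The plan is therefore to describe the lower covers of $\pf$ in $\DD_{n}^{B}$ explicitly and to show they are in bijection with the set of indices $i\in[k]$ such that $h_{i}>i$ and $h_{i}>h_{i-1}$ (setting $h_{0}=0$).

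For each such $i$ I would construct a lower cover $\pf_{i}\lessdot\pf$ by exhibiting its height sequence, in three cases. If $i<k$, I take $\pf_{i}$ to have height sequence $(h_{1},\ldots,h_{i-1},h_{i}-1,h_{i+1},\ldots,h_{k})$, retaining the final-letter type of $\pf$; the two hypotheses on $i$ translate directly into the monotonicity and lower-bound conditions of Lemma~\ref{lem:height_sequence_b}. If $i=k$ and $w_{\pf}$ ends in $u$, so $h_{k}=2n-k+1$, I take $\pf_{i}$ to have height sequence $(h_{1},\ldots,h_{k-1},2n-k)$, now ending in $r$; both hypotheses at $i=k$ are automatic here, since the trailing $u$ forces $h_{k}>h_{k-1}$ and $k\leq n$ forces $h_{k}>k$. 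If $i=k$ and $w_{\pf}$ ends in $r$, so $h_{k}=2n-k$, I take $\pf_{i}$ to have height sequence $(h_{1},\ldots,h_{k-1},h_{k}-1,h_{k})$ of length $k+1$, now ending in $u$; here $h_{k}>k$ amounts to $k<n$ and yields the bound $2n-k-1\geq k$, while $h_{k}>h_{k-1}$ provides the required monotonicity. In every case, comparing height sequences via the meet formula of Theorem~\ref{thm:distributive_b} shows $\pf_{i}\leq_{D}\pf$ with no element lying strictly between, so $\pf_{i}\lessdot\pf$.

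For the converse, I would show that every lower cover of $\pf$ arises as some $\pf_{i}$. Given $\qf\lessdot\pf$, the meet formula of Theorem~\ref{thm:distributive_b} forces $\hh_{\qf}$ to agree with $\hh_{\pf}$ everywhere except for a single coordinated modification: in the interior this means decrementing exactly one entry (Case $i<k$), while at the right end it means one of the two boundary modifications described in the two other cases above. If $\hh_{\qf}$ differed from $\hh_{\pf}$ in more than one independent coordinate, then raising one of the differing coordinates back toward $\pf$ and taking the meet with $\pf$ would produce a strict intermediate element, contradicting the covering relation.

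The main obstacle is the boundary case $i=k$: the operation on the height sequence is not merely ``decrement one entry'', since it can change the sequence length and toggle whether $w_{\pf}$ ends in $u$ or in $r$, and one must verify carefully that the two boundary sub-cases are mutually exclusive and that together with the interior case they produce all lower covers without redundancy. Once the bijection between lower covers of $\pf$ and valid indices $i\in[k]$ is in hand, $\pf$ has a unique lower cover if and only if exactly one index satisfies both inequalities, and by the initial reduction this is equivalent to being join-prime, as claimed.
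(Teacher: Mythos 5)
Your proof is correct and follows essentially the same route as the paper: it reduces join-primeness to join-irreducibility via distributivity and identifies the lower covers of $\pf$ with exactly the indices $i$ satisfying $h_{i}>i$ and $h_{i}>h_{i-1}$, obtained by decrementing the height sequence. Your explicit handling of the boundary case $i=k$ with $w_{\pf}$ ending in $r$ (where the cover's height sequence is $(h_{1},\ldots,h_{k-1},h_{k}-1,h_{k})$ of length $k+1$) is in fact more careful than the paper's uniform ``decrement $h_{i}$'' description, but the underlying argument is the same.
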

\begin{proof}
	In view of Theorem~\ref{thm:distributive_b} and Lemma~\ref{lem:distributive_prime_irreducible} it suffices to characterize the join-irreducible elements of $\DD_{n}^{B}$.
	
	First suppose that $\hh_{\pf}$ has the desired form, \ie there exists a unique index $i\in[k]$ such that $h_{i}>i$ and $h_{i}>h_{i-1}$.  We necessarily have $h_{i-1}=i-1$, and $h_{i}=h_{i+1}=\cdots=h_{\min\{h_{i},k\}}$.  It is now easy to check that $(h_{1},h_{2},\ldots,h_{i-1},h_{i}-1,h_{i+1},\ldots,h_{k})$ determines a Dyck path $\pf'\in D_{n}^{B}$, which is the unique lower cover of $\pf$ in $\DD_{n}^{B}$.  Hence $\pf$ is join-irreducible.
		
	Conversely, suppose that $\hh_{\pf}$ is not of the desired form.  Then we may have that $h_{i}=i$ for all $i\in[k]$, which implies that $\pf$ is the least element of $\DD_{n}^{B}$, and is thus not join-irreducible.  Otherwise, there are at least two indices $i$ and $j$ with $j>i$ such that $h_{i}>i$ and $h_{i}>h_{i-1}$ as well as $h_{j}>j$ and $h_{j}>h_{j-1}$.  It follows that both $(h_{1},h_{2},\ldots,h_{i-1},h_{i}-1,h_{i+1},\ldots,h_{k})$ and $(h_{1},h_{2},\ldots,h_{j-1},h_{j}-1,h_{j+1},\ldots,h_{k})$ determine lower covers of $\pf$ that are mutually incomparable.  Hence $\pf$ is not join-irreducible.
\end{proof}

The join-prime elements of $\DD_{n}^{A}$ are recovered by considering the case $k=n$ in Lemma~\ref{lem:join_prime_b}, and they have been characterized previously in \cite{ferrari11lattices}*{Proposition~3.7}.

As a final part of this section, we describe the subposet of join-prime elements of $\DD_{n}^{A}$ and $\DD_{n}^{B}$, respectively.  First of all, let us denote the set of join-irreducible elements of a finite lattice $\LL$ by $J(\LL)$, and write $J_{n}^{A}=J\bigl(\DD_{n}^{A}\bigr)$ and $J_{n}^{B}=J\bigl(\DD_{n}^{B}\bigr)$.

Define $T_{n}^{A}=\bigl\{(i,j)\mid 1\leq i<j\leq n\bigr\}$ and $T_{n}^{B}=\bigl\{(i,j)\mid 1\leq i<j\leq 2n+1-i\bigr\}$.  Consider the partial order on $\mathbb{N}^{2}$ defined by $(a,b)\leq (a',b')$ if and only if $a\geq a'$ and $b\leq b'$.  

\begin{lemma}\label{lem:join_prime_posets}
	We have $\bigl(J_{n}^{A},\leq_{D}\bigr)\cong\bigl(T_{n}^{A},\leq)$ and $\bigl(J_{n}^{B},\leq_{D}\bigr)\cong\bigl(T_{n}^{B},\leq\bigr)$ for every $n>0$.
\end{lemma}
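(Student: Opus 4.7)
The plan is to build an explicit bijection $\Phi\colon J_n^B \to T_n^B$ by sending each join-prime path to its unique ``jump coordinates'', and then verify that $\Phi$ preserves and reflects the order; the type-$A$ case will drop out by restriction via Remark~\ref{rem:dyck_b_reduction}. Concretely, given $\pf \in J_n^B$ with height sequence $\hh_\pf = (h_1,\ldots,h_k)$, Lemma~\ref{lem:join_prime_b} produces a unique index $i$ with $h_i > i$ and $h_i > h_{i-1}$, and I set $\Phi(\pf) = (i,\ell)$ with $\ell := h_i$.

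The first step is to show that the whole sequence $\hh_\pf$ is in fact determined by the pair $(i,\ell)$, and to identify the range of $\Phi$. Uniqueness of $i$ forces $h_j = j$ for every $j < i$ (the smallest such $j < i$ with $h_j > j$ would automatically satisfy $h_j > h_{j-1}$, violating uniqueness). For $j > i$, the same uniqueness together with $h_{j-1} \leq h_j$ and $h_j \geq j$ leaves only two local behaviours: either $h_j = h_{j-1}$ (flat at the current height) or $h_j = j$ (rejoins the staircase — not a jump since $h_j = j$). Walking forward from position $i$, the sequence therefore stays at $\ell$ while $\ell \geq j$, and once $j > \ell$ it is forced to take the staircase values $\ell+1,\ell+2,\ldots$. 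Combining this with the endpoint formula of Lemma~\ref{lem:height_sequence_b}, the admissible pairs split into three disjoint subcases: (a) $\ell \leq n$, where the path catches the staircase and ends at $(n,n)$ with $k=n$; (b) $n < \ell \leq 2n-i$, where the path stays flat at $\ell$ to the end and ends with a right step, so $k = 2n-\ell$; and (c) $\ell = 2n+1-i$, where the path stays flat at $\ell$ and ends with an up step, so $k = i$. These three subcases together cover exactly $T_n^B$, giving bijectivity; restricting to subcase (a) yields the type-$A$ bijection $J_n^A \cong T_n^A$.

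The second step is the order verification. For the direction $\pf \leq_D \pf' \Rightarrow (i,\ell) \leq (i',\ell')$, I evaluate $h_j \leq h'_j$ at indices chosen to expose $i$ and $\ell$. At $j = i$ (after handling $i > k'$ by instead working at $j = k'$), we have $h_i = \ell > i$, hence $h'_i \geq \ell > i$; since $h'_i$ can take only the value $j$ or $\ell'$, this rules out $h'_i = i$ and forces $i$ to lie inside the bump region of $\pf'$, giving $i' \leq i$ and $\ell \leq \ell'$. For the converse, given $i \geq i'$ and $\ell \leq \ell'$, a short case analysis on $j$ relative to the four thresholds $i',i,\ell,\ell'$ shows $h_j \leq h'_j$ in every region, and the length inequality $k \geq k'$ is immediate from the explicit formulas for $k$ in Step~1.

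The main difficulty is the bookkeeping in type $B$: the three subcases produce paths with different lengths and different end-behaviours, and I will need to check the order correspondence across mixed configurations (where $\pf$ and $\pf'$ fall into different subcases), particularly near the boundary $\ell = n$ and when $i > k'$ so that $j = i$ is not in $[k']$. Once the three subcases are laid out explicitly, however, the verification reduces to routine inequalities between $i,\ell,i',\ell'$, and the type-$A$ statement is the special case where only subcase (a) is available.
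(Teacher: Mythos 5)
Your proposal is correct and follows essentially the same route as the paper: both send a join-prime path to its unique jump pair $(i,h_i)$, observe that the path is the staircase with a single bump (so $h_s=\max\{s,h_i\}$ beyond the jump), and verify both directions of the order correspondence by comparing entries of the height sequences. The extra endpoint subcases you track in type $B$ (and the $i>k'$ situation) are exactly the bookkeeping the paper passes over silently, so they refine rather than change the argument.
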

\begin{proof}
	It is sufficient to consider only the type-$B$ case, since type $A$ is just a restriction of it.  
	
	Lemma~\ref{lem:join_prime_b} implies that the elements in $J_{n}^{B}$ can be indexed by a pair $(i,h_{i})$ for $i\in[k]$, where $k$ is the length of the corresponding height sequence, and $h_{i}$ is some value in $\{i+1,i+2,\ldots,2n+1-i\}$.  Write $\pf_{ij}$ for the corresponding path, where $j=h_{i}$.  Now pick $\pf_{ij},\pf_{i'j'}\in J_{n}^{B}$ with height sequences $\hh_{\pf_{ij}}=(h_{1},h_{2},\ldots,h_{k})$ and $\hh_{\pf_{i'j'}}=(h'_{1},h'_{2},\ldots,h'_{k'})$.  Lemma~\ref{lem:join_prime_b} implies that $h_{i}=j$ is the first entry of $\hh_{\pf}$ that is strictly greater than its index, and likewise for $h'_{i'}=j'$ and $\hh_{\pf_{i'j'}}$.  
	
	Suppose that $\pf_{ij}\leq_{D}\pf_{i'j'}$.  If $i<i'$, then $h'_{i}=i$, and we get $j=h_{i}\leq h'_{i}=i<j$, which is a contradiction.  If $i\geq i'$ and $j>j'$, then $j=h_{i}\leq h'_{i}=\max\{i,j'\}<j$, which is a contradiction.  Hence $(i,j)\leq(i',j')$ as desired.
	
	Conversely suppose that $i'\leq i<j\leq j'$, and let $s\in[n]$.  If $s<i'$, then $h_{s}=s=h'_{s}$.  If $i'\leq s<i$, then $h'_{s}=j'$, and $h_{s}=s$, which implies $h_{s}\leq h'_{s}$.  If $i\leq s$, then $h_{s}=\max\{j,s\}\leq\max\{j',s\}=h'_{s}$.  Hence $\pf_{ij}\leq_{D}\pf_{i'j'}$.  
\end{proof}

We remark that the type $A$-case of Lemma~\ref{lem:join_prime_posets} follows also from \cite{ferrari11lattices}*{Theorem~3.4}.  

\begin{corollary}\label{cor:dyck_paths_root_ideals}
	For $n>0$ we have $\DD_{n}^{A}\cong\II\bigl((T_{n}^{A},\leq)\bigr)$ and $\DD_{n}^{B}\cong\II\bigl((T_{n}^{B},\leq\bigr)$.  
\end{corollary}
\begin{proof}
	This follows from Theorems~\ref{thm:distributive_a} and \ref{thm:distributive_b}, and Lemma~\ref{lem:join_prime_posets} using G.~Birkhoff's representation theorem of finite distributive lattices~\cite{birkhoff37rings}*{Theorem~5}.
\end{proof}

\section{Formulas in the Heyting Algebra of Dyck Paths}
	\label{sec:formulas}
We begin this section with the proof of Theorem~\ref{thm:dyck_algebras}, since we have already gathered all the ingredients.  Subsequently, we provide explicit, combinatorial formulas for the computation of relative pseudocomplents and pseudocomplements in the Heyting algebras of Dyck paths of type $A$ and $B$, respectively, and we identify the regular elements.  

\begin{proof}[Proof of Theorem~\ref{thm:dyck_algebras}]
	The fact that both $\DD_{n}^{A}$ and $\DD_{n}^{B}$ are Heyting algebras follows from Theorem~\ref{thm:distributive_heyting} as well as Theorems~\ref{thm:distributive_a} and \ref{thm:distributive_b}, respectively.  The fact that $\DD_{n}^{A}$ is not a Heyting subalgebra of $\DD_{n}^{B}$ follows easily from Lemma~\ref{lem:pseudocomplement_comparable}.  Pick $\pf\in D_{n}^{A}$.  Said lemma implies $\pf\to^{A}\pf=\mathfrak{1}^{A}$ and $\pf\to^{B}\pf=\mathfrak{1}^{B}$, but $\mathfrak{1}^{A}<_{D}\mathfrak{1}^{B}$ in $\DD_{n}^{B}$.  The fact that $\DD_{n}^{A}$ is a sublattice of $\DD_{n}^{B}$ is a straightforward computation.  
	
	For the last part, recall that $\{\pf\in D_{2n}^{A}\mid\psi(\pf)=\pf\bigr\}$ is precisely the equalizer $\text{Eq}(\text{id},\psi)$.  Now, Lemma~\ref{lem:dyck_b_equalizer_poset} states that $\DD_{n}^{B}$ is isomorphic (as a lattice) to $\text{Eq}(\text{id},\psi)$.  Proposition~\ref{prop:equalizer_subobject} implies that $\text{Eq}(\text{id},\psi)$ is a Heyting subalgebra of $\DD_{2n}^{A}$, and since the operation $\to^{A}$ is defined completely in lattice-theoretic terms, the claim follows.
\end{proof}

\subsection{Monotone Lattice Paths}
	\label{sec:lattice_paths}
Before we compute the explicit formulas for the height functions of (relative) pseudocomplements and regular elements in the Heyting algebras of Dyck paths of type $A$ and $B$, we consider a slightly more general setting.  More precisely, for the moment we consider all lattice paths from $(0,0)$ to $(n,m)$ consisting only of up- and right-steps, and denote the set of all such paths by $L_{n,m}$.  Any $\pf\in L_{n,m}$ is uniquely determined by a \alert{height sequence} $\hh_{\pf}=(h_{1},h_{2},\ldots,h_{n})$, where $h_{1}\leq h_{2}\leq\cdots\leq h_{n}\leq m$.  Again, we consider this set equipped with the \alert{dominance order}, defined by $\pf\leq_{D}\pf'$ whenever $\hh_{\pf}$ is componentwise weakly smaller than $\hh_{\pf'}$.  It is straightforward to verify that the resulting poset $\LL_{n,m}=\bigl(L_{n,m},\leq_{D}\bigr)$ is a distributive lattice, and in view of Theorem~\ref{thm:distributive_heyting} it also forms a Heyting algebra.  See Figure~\ref{fig:paths_3} for an illustration.  

\begin{figure}
	\centering
	\begin{tikzpicture}\small
		\def\x{1.5};
		\def\y{1.15};
		\draw(2*\x,1*\y) node(v1){\pathThree{0/0/0}{\cTwo}{.4}{}};
		\draw(2*\x,2*\y) node(v2){\pathThree{0/0/1}{\cOne}{.4}{}};
		\draw(1.5*\x,3*\y) node(v3){\pathThree{0/1/1}{\cOne}{.4}{}};
		\draw(2.5*\x,3*\y) node(v4){\pathThree{0/0/2}{\cOne}{.4}{}};
		\draw(1*\x,4*\y) node(v5){\pathThree{1/1/1}{\cOne}{.4}{}};
		\draw(2*\x,4*\y) node(v6){\pathThree{0/1/2}{\cOne}{.4}{}};
		\draw(3*\x,4*\y) node(v7){\pathThree{0/0/3}{\cOne}{.4}{}};
		\draw(1*\x,5*\y) node(v8){\pathThree{1/1/2}{\cOne}{.4}{}};
		\draw(2*\x,5*\y) node(v9){\pathThree{0/2/2}{\cOne}{.4}{}};
		\draw(3*\x,5*\y) node(v10){\pathThree{0/1/3}{\cOne}{.4}{}};
		\draw(1*\x,6*\y) node(v11){\pathThree{1/2/2}{\cOne}{.4}{}};
		\draw(2*\x,6*\y) node(v12){\pathThree{1/1/3}{\cOne}{.4}{}};
		\draw(3*\x,6*\y) node(v13){\pathThree{0/2/3}{\cOne}{.4}{}};
		\draw(1*\x,7*\y) node(v14){\pathThree{2/2/2}{\cOne}{.4}{}};
		\draw(2*\x,7*\y) node(v15){\pathThree{1/2/3}{\cOne}{.4}{}};
		\draw(3*\x,7*\y) node(v16){\pathThree{0/3/3}{\cOne}{.4}{}};
		\draw(1.5*\x,8*\y) node(v17){\pathThree{2/2/3}{\cOne}{.4}{}};
		\draw(2.5*\x,8*\y) node(v18){\pathThree{1/3/3}{\cOne}{.4}{}};
		\draw(2*\x,9*\y) node(v19){\pathThree{2/3/3}{\cOne}{.4}{}};
		\draw(2*\x,10*\y) node(v20){\pathThree{3/3/3}{\cTwo}{.4}{}};
		\draw(v1) -- (v2);
		\draw(v2) -- (v3);
		\draw(v2) -- (v4);
		\draw(v3) -- (v5);
		\draw(v3) -- (v6);
		\draw(v4) -- (v6);
		\draw(v4) -- (v7);
		\draw(v5) -- (v8);
		\draw(v6) -- (v8);
		\draw(v6) -- (v9);
		\draw(v6) -- (v10);
		\draw(v7) -- (v10);
		\draw(v8) -- (v11);
		\draw(v8) -- (v12);
		\draw(v9) -- (v11);
		\draw(v9) -- (v13);
		\draw(v10) -- (v12);
		\draw(v10) -- (v13);
		\draw(v11) -- (v14);
		\draw(v11) -- (v15);
		\draw(v12) -- (v15);
		\draw(v13) -- (v15);
		\draw(v13) -- (v16);
		\draw(v14) -- (v17);
		\draw(v15) -- (v17);
		\draw(v15) -- (v18);
		\draw(v16) -- (v18);
		\draw(v17) -- (v19);
		\draw(v18) -- (v19);
		\draw(v19) -- (v20);		
	\end{tikzpicture}		
	\caption{The lattice $\LL_{3,3}$.  The highlighted paths are regular.}
	\label{fig:paths_3}
\end{figure}

It is straightforward to verify that $\DD_{n}^{A}$ is isomorphic to the interval $[\mathfrak{0}^{A},\mathfrak{1}]$ in $\LL_{n,n}$, where $\hh_{\mathfrak{0}^{A}}=(1,2,\ldots,n)$ and $\hh_{\mathfrak{1}}=(n,n,\ldots,n)$.  Moreover, $\DD_{n+1}^{A}$ is isomorphic to the interval $[\qf,\mathfrak{1}]$ in $\LL_{n,n}$, where $\hh_{\qf}=(0,1,\ldots,n)$.  This is because if $\pf\in D_{n+1}^{A}$ has height sequence $\hh_{\pf}=(h_{1},h_{2},\ldots,h_{n+1})$, then $h_{i}\geq i$, and $h_{n+1}=n+1$.  So the sequence $(h_{1}-1,h_{2}-1,\ldots,h_{n}-1)$ is the height sequence of some path in $[\qf,\mathfrak{1}]$, and this is clearly a bijection.  The following theorem states an explicit formula for the computation of the relative pseudocomplement in $\LL_{n,m}$ in terms of height sequences. 

\begin{theorem}\label{thm:rel_pseudocomp_paths}
	Let $\pf_{1},\pf_{2}\in L_{n,m}$ with height sequences $\hh_{\pf_{1}}=\bigl(h_{1}^{(1)},h_{2}^{(1)},\ldots,h_{n}^{(1)}\bigr)$ and $\hh_{\pf_{2}}=\bigl(h_{1}^{(2)},h_{2}^{(2)},\ldots,h_{n}^{(2)}\bigr)$.  The relative pseudocomplement $\pf_{1}\to_{D}\pf_{2}$ is the path $\pf\in L_{n,m}$ determined by the height sequence $\hh_{\pf}=(h_{1},h_{2},\ldots,h_{n})$, with
	\begin{displaymath}
		h_{i} = \begin{cases}m & \text{if}\; i=n\;\text{and}\;h_{i}^{(1)}\leq h_{i}^{(2)},\\h_{i+1}, & \text{if}\;i<n\;\text{and}\;h_{i}^{(1)}\leq h_{i}^{(2)},\\h_{i}^{(2)}, & \text{if}\;i\leq n\;\text{and}\;h_{i}^{(1)}>h_{i}^{(2)}.\end{cases}
	\end{displaymath}
\end{theorem}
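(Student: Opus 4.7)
The plan is to unfold the lattice-theoretic definition \eqref{eq:pseudocomplement} and turn it into a pointwise optimization on height sequences. Since the meet in $\LL_{n,m}$ is still componentwise minimum (by the same argument as in Theorem~\ref{thm:distributive_a}), the condition $\pf_{1}\wedge_{D}\pf\leq_{D}\pf_{2}$ is equivalent to $\min\bigl\{h_{i}^{(1)},h_{i}\bigr\}\leq h_{i}^{(2)}$ for every $i\in[n]$. I would call an index $i$ \emph{free} if $h_{i}^{(1)}\leq h_{i}^{(2)}$, in which case the inequality holds for any $h_{i}$, and \emph{tight} if $h_{i}^{(1)}>h_{i}^{(2)}$, in which case it forces $h_{i}\leq h_{i}^{(2)}$. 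Computing $\pf_{1}\to_{D}\pf_{2}$ thus amounts to finding the componentwise largest weakly increasing sequence $(h_{1},\ldots,h_{n})$ with $h_{n}\leq m$ that obeys this pointwise cap at every tight position.

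To maximize, I would process the positions from right to left: set $h_{n}=m$ if $n$ is free, set $h_{i}=h_{i}^{(2)}$ at every tight position, and fill every remaining free position by the value $h_{i+1}$ already chosen to its right. This is exactly the piecewise rule in the statement. It is useful to also record the non-recursive description that falls out of this: $h_{i}$ equals $h_{j}^{(2)}$ where $j$ is the smallest tight index with $j\geq i$, and equals $m$ if no such $j$ exists. In other words, $h_{i}$ is the minimum, over all $j\geq i$, of the binding cap at $j$, which is precisely what one must take in order to simultaneously respect every pointwise cap and monotonicity.

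It remains to verify three things, which I expect to be routine case-checks. First, $\hh_{\pf}$ is weakly increasing: at a free $i$ one has $h_{i}=h_{i+1}$ by construction, and at a tight $i$ one has $h_{i}=h_{i}^{(2)}\leq h_{i+1}$, using either that $i+1$ is tight (so $h_{i+1}=h_{i+1}^{(2)}\geq h_{i}^{(2)}$ by monotonicity of $\hh_{\pf_{2}}$) or, via the non-recursive description, that $h_{i+1}$ equals some $h_{k}^{(2)}$ with $k>i$ or equals $m$. Second, the defining inequality holds: at free positions it is automatic, and at tight positions one has $\min\{h_{i}^{(1)},h_{i}\}=h_{i}^{(2)}$. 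Third, maximality: given any $\pf'\in L_{n,m}$ with $\pf_{1}\wedge_{D}\pf'\leq_{D}\pf_{2}$ and height sequence $(h'_{1},\ldots,h'_{n})$, pick the smallest tight $j\geq i$, if any. If $j\leq n$, the constraint at position $j$ combined with $h_{j}^{(1)}>h_{j}^{(2)}$ forces $h'_{j}\leq h_{j}^{(2)}$, and monotonicity of $\hh_{\pf'}$ yields $h'_{i}\leq h'_{j}\leq h_{j}^{(2)}=h_{i}$; if no tight $j\geq i$ exists, then $h'_{i}\leq m=h_{i}$. The only slightly delicate step is this last one, since it is exactly where one invokes the monotonicity of the competing path $\pf'$ to reach forward to the nearest tight position.
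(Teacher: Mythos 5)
Your proposal is correct and follows essentially the same route as the paper's proof: verify that the constructed sequence is a valid height sequence, check the meet inequality componentwise via the fact that the meet is the componentwise minimum, and establish maximality. The only cosmetic differences are that you prove maximality directly (showing every competing $\pf'$ lies below $\pf$ by reaching forward to the nearest tight index, rather than the paper's contradiction argument at a maximal index where $h_{i}<h'_{i}$), and that your non-recursive description of $h_{i}$ is precisely the content of Corollary~\ref{cor:rel_pseudocomp_paths}.
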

\begin{proof}
	First of all we need to show that $\hh_{\pf}$ is indeed a height sequence of some $\pf\in L_{n,m}$, which means that we need to show that $h_{i}\leq h_{i+1}$ for all $i\in[n-1]$ and $h_{n}\leq m$.  The second condition is immediate from the construction.  For the first condition, we observe that there are two options.  Either $h_{i}=h_{i+1}$, and we are done, or $h_{i}=h_{i}^{(2)}$.  If $h_{i}=h_{i}^{(2)}$, then we need to look at $h_{i+1}$.  Either we have $h_{i+1}=h_{i+2}=\cdots=h_{n}$, where $h_{n}\in\{m,h_{n}^{(2)}\}$, and we are done since $h_{i}^{(2)}\leq h_{n}^{(2)}\leq m$ by virtue of the fact that $\pf_{2}\in L_{n,m}$, or there is some minimal $j>i$ with $h_{j}=h_{j}^{(2)}$.  In that case we have $h_{i+1}=h_{i+2}=\cdots=h_{j}=h_{j}^{(2)}$, and again we are done since $h_{i}^{(2)}\leq h_{j}^{(2)}$.  
	
	Now we need to show that $\pf$ satisfies $\pf_{1}\wedge_{D}\pf\leq_{D}\pf_{2}$.  Since the meet in $\LL_{n,m}$ is given by taking the componentwise minimum of the height sequences, it suffices to show that $\min\bigl\{h_{i}^{(1)},h_{i}\bigr\}\leq h_{i}^{(2)}$ for all $i\in[n]$.  If $h_{i}^{(1)}>h_{i}^{(2)}$, then we have $\min\bigl\{h_{i}^{(1)},h_{i}\bigr\}=\min\bigl\{h_{i}^{(1)},h_{i}^{(2)}\bigr\}\leq h_{i}^{(2)}$; and if $h_{i}^{(1)}\leq h_{i}^{(2)}$, then we can check that $h_{i}^{(1)}\leq h_{i}$, which yields $\min\bigl\{h_{i}^{(1)},h_{i}\bigr\}=h_{i}^{(1)}\leq h_{i}^{(2)}$ as desired.
	
	
	Now suppose that there is some other $\pf'\in L_{n,m}$ with $\pf_{1}\wedge_{D}\pf'\leq\pf_{2}$, and let $\hh_{\pf'}=(h'_{1},h'_{2},\ldots,h'_{n})$.  If $\pf<_{D}\pf'$, then there must be a maximal index $i\in[n]$ with $h_{i}<h'_{i}$.  If $i=n$, then $h_{n}<h'_{n}\leq m$, which rules out the option that $h_{n}=m$.  By construction, we therefore have $h_{n}=h_{n}^{(2)}$.  This can only occur if $h_{n}^{(1)}>h_{n}^{(2)}$.  The choice of $\pf'$ requires $\min\bigl\{h_{n}^{(1)},h'_{n}\bigr\}\leq h_{n}^{(2)}$, which yields the contradiction $h'_{n}\leq h_{n}^{(2)}=h_{n}<h'_{n}$.  Therefore, we have $i<n$.  If $h_{i}^{(1)}\leq h_{i}^{(2)}$, then $h_{i}=h_{i+1}$, and the maximality of $i$ implies $h'_{i}>h_{i+1}=h'_{i+1}$, which contradicts the assumption that $\pf'\in L_{n,m}$.  If $h_{i}^{(1)}>h_{i}^{(2)}$, then $h'_{i}>h_{i}=h_{i}^{(2)}$, and hence $\min\bigl\{h_{i}^{(1)},h'_{i}\bigr\}>h_{i}^{(2)}$, which contradicts $\pf_{1}\wedge_{D}\pf'\leq_{D}\pf_{2}$.  Hence $\pf$ is indeed the relative pseudocomplement of $\pf_{1}$ with respect to $\pf_{2}$.  
\end{proof}

The formula in Theorem~\ref{thm:rel_pseudocomp_paths} is inductive from the right.  We can rephrase Theorem~\ref{thm:rel_pseudocomp_paths} as follows.

\begin{corollary}\label{cor:rel_pseudocomp_paths}
	Let $\pf_{1},\pf_{2}\in L_{n,m}$ with height sequences $\hh_{\pf_{1}}=\bigl(h_{1}^{(1)},h_{2}^{(1)},\ldots,h_{n}^{(1)}\bigr)$ and $\hh_{\pf_{2}}=\bigl(h_{1}^{(2)},h_{2}^{(2)},\ldots,h_{n}^{(2)}\bigr)$, and set $\bigl\{i\in[n]\mid h_{i}^{(1)}>h_{i}^{(2)}\bigr\}=\{i_{1},i_{2},\ldots,i_{s}\}$ as well as $i_{0}=0$.  The relative pseudocomplement $\pf_{1}\to_{D}\pf_{2}$ is the path $\pf\in L_{n,m}$ determined by the height sequence $\hh_{\pf}=(h_{1},h_{2},\ldots,h_{n})$, with
	\begin{displaymath}
		h_{i_{k-1}+1}=h_{i_{k-1}+2}=\cdots=h_{i_{k}}=h_{i_{k}}^{(2)},
	\end{displaymath}
	for $k\in[s]$.  If $i_{s}<n$, then we additionally have $h_{i_{s}+1}=h_{i_{s}+2}=\cdots=h_{i_{n}}=m$.
\end{corollary}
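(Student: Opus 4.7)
The plan is to deduce Corollary~\ref{cor:rel_pseudocomp_paths} directly from Theorem~\ref{thm:rel_pseudocomp_paths} by a downward induction on the index $i\in[n]$. The rule in the theorem determines $h_i$ in terms of $h_{i+1}$ whenever $h_i^{(1)}\leq h_i^{(2)}$, so iterating the rule simply propagates the value of $h_i$ leftward until we reach an index $i_k$ at which $h_{i_k}^{(1)}>h_{i_k}^{(2)}$, where the value is reset to $h_{i_k}^{(2)}$. Thus the indices $i_1,i_2,\ldots,i_s$ exhaustively mark the positions where a new value is introduced, and between two consecutive such positions the height function of $\pf_1\to_D \pf_2$ is constant.

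First I would handle the tail to the right of $i_s$. If $i_s=n$, this tail is empty and there is nothing to check. If $i_s<n$, then for every $i\in\{i_s+1,i_s+2,\ldots,n\}$ we have $h_i^{(1)}\leq h_i^{(2)}$. Applying the first case of Theorem~\ref{thm:rel_pseudocomp_paths} at $i=n$ gives $h_n=m$, and the second case then yields $h_i=h_{i+1}$ for $i_s<i<n$. A downward induction delivers $h_{i_s+1}=h_{i_s+2}=\cdots=h_n=m$, matching the final assertion of the corollary.

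Next, for each $k\in[s]$, I would process the block of indices from $i_{k-1}+1$ up to $i_k$. Since $h_{i_k}^{(1)}>h_{i_k}^{(2)}$, the third case of Theorem~\ref{thm:rel_pseudocomp_paths} applies regardless of whether $i_k=n$, giving $h_{i_k}=h_{i_k}^{(2)}$. For indices $i$ with $i_{k-1}<i<i_k$ we have $h_i^{(1)}\leq h_i^{(2)}$ and $i<n$, so the second case yields $h_i=h_{i+1}$. A downward induction from $i=i_k-1$ to $i=i_{k-1}+1$ then produces $h_{i_{k-1}+1}=\cdots=h_{i_k}=h_{i_k}^{(2)}$, as claimed.

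There is no real obstacle; the corollary is merely a repackaging of the inductive formula. The only mild care needed is the bookkeeping of the boundary cases $i_s=n$ and $s=0$. In the latter case (corresponding to $\pf_1\leq_D\pf_2$) the set of reset indices is empty and the formula returns the constant sequence $(m,m,\ldots,m)$, i.e.\ the top element $\hat{1}$, which agrees with Lemma~\ref{lem:pseudocomplement_comparable}.
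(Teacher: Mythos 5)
Your proposal is correct and follows essentially the same route as the paper: both arguments simply unwind the case formula of Theorem~\ref{thm:rel_pseudocomp_paths} from right to left, noting that the value $h_{i_k}^{(2)}$ (or $m$ beyond $i_s$) propagates leftward across each block of indices where $h_i^{(1)}\leq h_i^{(2)}$. Your write-up is just a more explicit version of the paper's one-paragraph verification, with the boundary cases $i_s=n$ and $s=0$ spelled out.
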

\begin{proof}
	Pick $i\in[n]$.  If $h_{i}^{(1)}>h_{i}^{(2)}$, then $h_{i}=h_{i}^{(2)}$.  If $h_{i}^{(1)}\leq h_{i}^{(2)}$, then $h_{i}=h_{i+1}$ whenever $i<n$, and $h_{i}=m$, whenever $i>i_{s}$.  These are precisely the conditions in Theorem~\ref{thm:rel_pseudocomp_paths} for the height sequence of the relative pseudocomplement of $\pf_{1}$ with respect to $\pf_{2}$. 
\end{proof}

The following are immediate corollaries of Theorem~\ref{thm:rel_pseudocomp_paths}.

\begin{corollary}\label{cor:pseudocomp_paths}
	Let $\pf\in L_{n,m}$ with height sequence $\hh_{\pf}=(h_{1},h_{2},\ldots,h_{n})$.  The pseudocomplement of $\pf$ is either the least element $\mathfrak{0}$ whenever $\pf\neq\mathfrak{0}$, or the greatest element $\mathfrak{1}$ otherwise.
\end{corollary}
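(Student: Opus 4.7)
The plan is to derive this directly from Theorem~\ref{thm:rel_pseudocomp_paths} (equivalently, from Corollary~\ref{cor:rel_pseudocomp_paths}) by specializing $\pf_{2}$ to the least element $\mathfrak{0}$, whose height sequence is $(0,0,\ldots,0)$. Write $\hh_{\pf}=(h_{1}^{(1)},h_{2}^{(1)},\ldots,h_{n}^{(1)})$, set $h_{i}^{(2)}=0$ for all $i$, and let $(h_{1},h_{2},\ldots,h_{n})$ be the height sequence produced by the recursion in Theorem~\ref{thm:rel_pseudocomp_paths}. The proof then splits into two cases according to whether $\pf=\mathfrak{0}$.

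First I would handle the case $\pf=\mathfrak{0}$, where $h_{i}^{(1)}=0=h_{i}^{(2)}$ for every $i\in[n]$. Here the hypothesis $h_{i}^{(1)}\leq h_{i}^{(2)}$ is always satisfied, so the first clause of the formula gives $h_{n}=m$, and the second clause gives $h_{i}=h_{i+1}$ for $i<n$. A straightforward downward induction yields $h_{i}=m$ for all $i$, which is precisely the height sequence of $\mathfrak{1}$.

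Next I would treat the case $\pf\neq\mathfrak{0}$. Since $\hh_{\pf}$ is weakly increasing and not identically zero, we must have $h_{n}^{(1)}>0=h_{n}^{(2)}$, so the third clause of the formula applies at $i=n$ and forces $h_{n}=h_{n}^{(2)}=0$. I then argue by downward induction on $i$ that $h_{i}=0$ for all $i\in[n]$: if $h_{i}^{(1)}>0$ then the third clause gives $h_{i}=0$ directly, and if $h_{i}^{(1)}=0$ then the second clause gives $h_{i}=h_{i+1}=0$ by the inductive hypothesis. Consequently $\hh_{\pf\to_{D}\mathfrak{0}}=(0,0,\ldots,0)$, which is the height sequence of $\mathfrak{0}$.

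There is really no significant obstacle here; the whole statement is a mechanical specialization of the general formula, and the only point that requires a moment's thought is the monotonicity observation $h_{n}^{(1)}>0$, which guarantees that the third clause triggers at the rightmost position and propagates the value $0$ all the way to the left.
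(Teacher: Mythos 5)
Your proof is correct and follows essentially the same route as the paper: both cases are obtained by specializing Theorem~\ref{thm:rel_pseudocomp_paths} to $\pf_{2}=\mathfrak{0}$. The only cosmetic differences are that the paper dispatches the case $\pf=\mathfrak{0}$ via Lemma~\ref{lem:pseudocomplement_comparable} instead of reading it off the formula, and in the case $\pf\neq\mathfrak{0}$ it deduces $h'_{1}=\cdots=h'_{n}=0$ from $h'_{n}=0$ and the monotonicity of the height sequence rather than by your clause-by-clause downward induction.
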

\begin{proof}
	We observe that the least element $\mathfrak{0}$ in $\LL_{n,m}$, which is given by the height sequence $(0,0,\ldots,0)$, has a unique upper cover, namely the path $\mathfrak{a}$ given by the height sequence $\hh_{\mathfrak{a}}=(0,0,\ldots,0,1)$.  Hence the greatest path $\qf$ such that $\pf\wedge_{D}\qf\leq_{D}\mathfrak{0}$ is $\qf=\mathfrak{0}$, unless $\pf=\mathfrak{0}$, in which case we get $\qf=\mathfrak{1}$.
\end{proof}

\begin{corollary}\label{cor:regular_paths}
	A path $\pf\in L_{n,m}$ is regular if and only if $\pf$ is either the least or the greatest element of $\LL_{n,m}$. 
\end{corollary}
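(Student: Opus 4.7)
The plan is to derive Corollary~\ref{cor:regular_paths} as an immediate consequence of Corollary~\ref{cor:pseudocomp_paths}, which already gives a complete description of the pseudocomplement map on $\LL_{n,m}$. Since regularity asks exactly that $(\pf^{\cc})^{\cc}=\pf$, and since Corollary~\ref{cor:pseudocomp_paths} tells us that the pseudocomplement takes only two possible values (namely $\mathfrak{0}$ and $\mathfrak{1}$), the result will follow from a short case check.

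First I would record what Corollary~\ref{cor:pseudocomp_paths} says: for any $\qf\in L_{n,m}$, one has $\qf^{\cc}=\mathfrak{1}$ if $\qf=\mathfrak{0}$, and $\qf^{\cc}=\mathfrak{0}$ otherwise. Assume $\mathfrak{0}\neq\mathfrak{1}$ in $\LL_{n,m}$ (this holds as soon as $n,m>0$; the degenerate case is trivial). Then applying the formula a second time gives $\mathfrak{0}^{\cc\cc}=\mathfrak{1}^{\cc}=\mathfrak{0}$ and $\mathfrak{1}^{\cc\cc}=\mathfrak{0}^{\cc}=\mathfrak{1}$, so both $\mathfrak{0}$ and $\mathfrak{1}$ are regular.

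Conversely, suppose $\pf\in L_{n,m}$ is neither $\mathfrak{0}$ nor $\mathfrak{1}$. Then Corollary~\ref{cor:pseudocomp_paths} gives $\pf^{\cc}=\mathfrak{0}$, and applying it once more yields $(\pf^{\cc})^{\cc}=\mathfrak{0}^{\cc}=\mathfrak{1}\neq\pf$. Hence $\pf$ is not regular, which completes both directions.

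There is no real obstacle here; the entire content sits in Corollary~\ref{cor:pseudocomp_paths}, and the only thing to be mindful of is to separate out the trivial edge case in which $\LL_{n,m}$ is a one-element lattice (when $n=0$ or $m=0$), where $\mathfrak{0}=\mathfrak{1}$ is simultaneously the least and greatest element and is regular for vacuous reasons.
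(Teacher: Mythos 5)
Your proof is correct and follows essentially the same route as the paper: both arguments reduce the claim to the two-valued description of the pseudocomplement in Corollary~\ref{cor:pseudocomp_paths} and then check $(\pf^{\cc})^{\cc}$ in the cases $\pf=\mathfrak{0}$, $\pf=\mathfrak{1}$, and $\pf$ neither. Your extra remark about the degenerate one-element lattice is harmless but not needed, since the paper works with $n,m>0$.
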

\begin{proof}
	By definition $\pf\in L_{n,m}$ is regular if and only if $(\pf\to\mathfrak{0})\to\mathfrak{0}=\pf$.  If $\pf=\mathfrak{0}$, then Corollary~\ref{cor:pseudocomp_paths} implies $(\mathfrak{0}\to\mathfrak{0})\to\mathfrak{0}=\mathfrak{1}\to\mathfrak{0}=\mathfrak{0}$, and if $\pf\neq\mathfrak{0}$, then Corollary~\ref{cor:pseudocomp_paths} implies $(\pf\to\mathfrak{0})\to\mathfrak{0}=\mathfrak{0}\to\mathfrak{0}=\mathfrak{1}$, and hence $\pf=\mathfrak{1}$ is the only other regular element of $\LL_{n,m}$.  
\end{proof}

\subsection{Type $A$}
	\label{sec:formula_a}
As mentioned earlier, $\DD_{n}^{A}$ is an interval in $\LL_{n,n}$.  Hence we can use Theorem~\ref{thm:rel_pseudocomp_paths} to derive an explicit formula for the relative pseudocomplements in $\DD_{n}^{A}$. 

\begin{theorem}\label{thm:rel_pseudocomp_a}
		Let $\pf_{1},\pf_{2}\in D_{n,n}^{A}$ with height sequences $\hh_{\pf_{1}}=\bigl(h_{1}^{(1)},h_{2}^{(1)},\ldots,h_{n}^{(1)}\bigr)$ and $\hh_{\pf_{2}}=\bigl(h_{1}^{(2)},h_{2}^{(2)},\ldots,h_{n}^{(2)}\bigr)$, respectively.  The relative pseudocomplement $\pf_{1}\to_{D}\pf_{2}$ is the path $\pf\in D_{n}^{A}$ determined by the height sequence $\hh_{\pf}=(h_{1},h_{2},\ldots,h_{n})$, with
	\begin{displaymath}
		h_{i} = \begin{cases}h_{i+1}, & \text{if}\;i<n\;\text{and}\;h_{i}^{(1)}\leq h_{i}^{(2)},\\h_{i}^{(2)}, & \text{if}\;i=n,\;\text{or}\;i<n\;\text{and}\;h_{i}^{(1)}>h_{i}^{(2)}.\end{cases}
	\end{displaymath}
\end{theorem}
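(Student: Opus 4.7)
The plan is to reduce this theorem to Theorem~\ref{thm:rel_pseudocomp_paths} by recognizing $\DD_{n}^{A}$ as a Heyting subalgebra of $\LL_{n,n}$. As remarked right before Theorem~\ref{thm:rel_pseudocomp_paths}, $\DD_{n}^{A}$ is the principal filter $[\mathfrak{0}^{A},\mathfrak{1}]$ of $\LL_{n,n}$, where $\hh_{\mathfrak{0}^{A}}=(1,2,\ldots,n)$ and $\hh_{\mathfrak{1}}=(n,n,\ldots,n)$. Since meets and joins in $\LL_{n,n}$ are the componentwise minimum and maximum, both preserve the pointwise bound by $(1,2,\ldots,n)$, so $\DD_{n}^{A}$ is a sublattice of $\LL_{n,n}$.

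The next step is to show that this filter is actually a Heyting subalgebra. For arbitrary $\pf_{1},\pf_{2}\in\DD_{n}^{A}$, let $\pf$ be the relative pseudocomplement of $\pf_{1}$ with respect to $\pf_{2}$ computed inside $\LL_{n,n}$ via Theorem~\ref{thm:rel_pseudocomp_paths}. Since $\mathfrak{0}^{A}\leq_{D}\pf_{2}$, we have $\pf_{1}\wedge_{D}\mathfrak{0}^{A}=\mathfrak{0}^{A}\leq_{D}\pf_{2}$, hence $\mathfrak{0}^{A}\leq_{D}\pf$ by the defining property of the relative pseudocomplement. Consequently $\pf$ already lies in $[\mathfrak{0}^{A},\mathfrak{1}]$, and the maximality of $\pf$ in $\LL_{n,n}$ implies maximality in the sublattice $\DD_{n}^{A}$ as well. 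Thus the relative pseudocomplement in $\DD_{n}^{A}$ coincides with the one given by Theorem~\ref{thm:rel_pseudocomp_paths}.

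It only remains to simplify Theorem~\ref{thm:rel_pseudocomp_paths} in the type-$A$ setting. Every $\pf\in D_{n}^{A}$ has $h_{n}=n$ by Lemma~\ref{lem:height_sequence_a}, so $h_{n}^{(1)}=h_{n}^{(2)}=n$. Hence the case $i=n$ with $h_{i}^{(1)}>h_{i}^{(2)}$ is vacuous, and the case $i=n$ with $h_{i}^{(1)}\leq h_{i}^{(2)}$ yields $h_{n}=m=n=h_{n}^{(2)}$. This allows us to merge both $i=n$ subcases into the single clause ``$i=n$, or $i<n$ and $h_{i}^{(1)}>h_{i}^{(2)}$'' with value $h_{i}^{(2)}$, while the remaining clause ``$i<n$ and $h_{i}^{(1)}\leq h_{i}^{(2)}$'' with value $h_{i+1}$ is unchanged. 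This is precisely the formula in the statement.

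The only genuinely new content beyond invoking Theorem~\ref{thm:rel_pseudocomp_paths} is the observation that a principal filter of a Heyting algebra is always a Heyting subalgebra, which is the small obstacle to overcome; once that is in hand the result is essentially a bookkeeping exercise. An alternative route would be to redo the proof of Theorem~\ref{thm:rel_pseudocomp_paths} verbatim inside $\DD_{n}^{A}$ (checking that $\hh_{\pf}$ is weakly increasing and satisfies $h_{i}\geq i$, then verifying $\pf_{1}\wedge_{D}\pf\leq_{D}\pf_{2}$ componentwise, then arguing maximality by contradiction at the largest index where a hypothetical larger $\pf'$ disagrees with $\pf$), but the filter argument is cleaner and avoids duplication.
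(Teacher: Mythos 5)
Your argument is correct, but it is not the route the paper takes. The paper proves Theorem~\ref{thm:rel_pseudocomp_a} by redoing the verification of Theorem~\ref{thm:rel_pseudocomp_paths} inside $\DD_{n}^{A}$: it checks directly that the displayed sequence is weakly increasing with $h_{n}=n$, proves $h_{i}\geq i$ by a maximal-bad-index argument (so that Lemma~\ref{lem:height_sequence_a} applies), and then observes that the verification of $\pf_{1}\wedge_{D}\pf\leq_{D}\pf_{2}$ and of maximality goes through almost verbatim. You instead use the structural fact that $\DD_{n}^{A}$ is the interval $[\mathfrak{0}^{A},\mathfrak{1}]$ in $\LL_{n,n}$ with the same top element: since $\pf_{1}\wedge_{D}\mathfrak{0}^{A}\leq_{D}\mathfrak{0}^{A}\leq_{D}\pf_{2}$, the relative pseudocomplement computed in $\LL_{n,n}$ satisfies $\mathfrak{0}^{A}\leq_{D}\pf$, hence lies in the interval and is a fortiori the greatest element of the interval with the defining property. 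This is a genuine shortcut: it makes the membership check $h_{i}\geq i$ and the maximality argument automatic, leaving only the (correct) bookkeeping that collapses the case $i=n$ using $h_{n}^{(1)}=h_{n}^{(2)}=n=m$. What the paper's direct computation buys in exchange is a self-contained argument at the level of height sequences that serves as the template for the type-$B$ case (Theorem~\ref{thm:rel_pseudocomp_b}), where no such clean interval embedding with matching top is available. One small caveat on wording: calling the principal filter a ``Heyting subalgebra'' is loose, since $[\mathfrak{0}^{A},\mathfrak{1}]$ does not contain the bottom of $\LL_{n,n}$ and the pseudocomplements genuinely differ (compare Corollary~\ref{cor:pseudocomp_paths} with Corollary~\ref{cor:pseudocomp_a}); the filter is closed under $\wedge$, $\vee$ and $\to$, which is exactly what your second paragraph proves and all that the argument needs, so this is a terminological slip rather than a gap.
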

\begin{proof}
	Recall that $h_{n}^{(1)}=h_{n}^{(2)}=n$, hence the simplification of the statement (compared to Theorem~\ref{thm:rel_pseudocomp_paths}) is justified.   We only need to show that $\hh_{\pf}$ is indeed the height sequence of some $\pf\in D_{n}^{A}$.  Proving that $h_{1}\leq h_{2}\leq\cdots\leq h_{n}=n$ works analogously to the proof of Theorem~\ref{thm:rel_pseudocomp_paths}.  It remains to show that $h_{i}\geq i$ for $i\in[n]$.  This is clearly satisfied for $i=n$.  So choose $i<n$ maximal such that $h_{i}<i$.  We have two choices. Either $h_{i}=h_{i+1}$ and the maximality of $i$ implies that $i>h_{i}=h_{i+1}\geq i+1$ which is a contradiction, or $h_{i}=h_{i}^{(2)}$ and since $\hh_{\pf_{2}}$ is the height sequence of some $\pf\in D_{n}^{A}$ we conclude $i>h_{i}=h_{i}^{(2)}\geq i$ which is a contradiction as well.  We conclude that $h_{i}\geq i$ for all $i\in[n]$, and according to Lemma~\ref{lem:height_sequence_a} $\hh_{\pf}$ is indeed the height sequence of some $\pf\in\DD_{n}^{A}$.  The fact that $\pf$ is the relative pseudocomplement of $\pf_{1}$ with respect to $\pf_{2}$ works almost verbatim to the proof of Theorem~\ref{thm:rel_pseudocomp_paths}.  
\end{proof}

\begin{corollary}\label{cor:rel_pseudocomp_a}
	Let $\pf_{1},\pf_{2}\in D_{n}^{A}$ with height sequences $\hh_{\pf_{1}}=\bigl(h_{1}^{(1)},h_{2}^{(1)},\ldots,h_{n}^{(1)}\bigr)$ and $\hh_{\pf_{2}}=\bigl(h_{1}^{(2)},h_{2}^{(2)},\ldots,h_{n}^{(2)}\bigr)$, and set $\bigl\{i\in[n]\mid h_{i}^{(1)}>h_{i}^{(2)}\bigr\}=\{i_{1},i_{2},\ldots,i_{s}\}$ as well as $i_{0}=0$ and $i_{s+1}=n$.  The relative pseudocomplement $\pf_{1}\to_{D}\pf_{2}$ is the path $\pf\in D_{n}^{A}$ determined by the height sequence $\hh_{\pf}=(h_{1},h_{2},\ldots,h_{n})$, with
	\begin{displaymath}
		h_{i_{k-1}+1}=h_{i_{k}+2}=\cdots=h_{i_{k}}=h_{i_{k}}^{(2)},
	\end{displaymath}
	for $k\in[s+1]$.
\end{corollary}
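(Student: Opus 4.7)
The plan is to derive the explicit block formula of the corollary directly from the right-to-left recursion in Theorem~\ref{thm:rel_pseudocomp_a}; no new lattice-theoretic input is needed, only a careful reorganization of the recursion.

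First I would note that Theorem~\ref{thm:rel_pseudocomp_a} determines $h_n$ outright and then computes $h_{n-1}, h_{n-2}, \ldots, h_{1}$ in decreasing order. At $i=n$ we have $h_n = h_n^{(2)} = n$. For $i < n$, the recursion either \emph{resets} $h_i$ to $h_i^{(2)}$ (precisely when $i \in \{i_1, i_2, \ldots, i_s\}$, i.e.\ when $h_i^{(1)} > h_i^{(2)}$) or \emph{copies} $h_i = h_{i+1}$ (precisely when $h_i^{(1)} \leq h_i^{(2)}$). Thus $h_i$ is constant between consecutive reset points.

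Next I would iterate this observation block by block. Fix $k \in [s]$ and consider the block of indices $i_{k-1}+1, \ldots, i_k$. Every index $i$ strictly between $i_{k-1}$ and $i_k$ satisfies $h_i^{(1)} \leq h_i^{(2)}$, so the copy rule gives $h_{i_{k-1}+1} = h_{i_{k-1}+2} = \cdots = h_{i_k-1} = h_{i_k}$, while the reset rule at $i_k$ gives $h_{i_k} = h_{i_k}^{(2)}$. Concatenating, one obtains the stated identity $h_{i_{k-1}+1} = \cdots = h_{i_k} = h_{i_k}^{(2)}$ for $k \in [s]$.

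For the rightmost block, with $k = s+1$ and $i_{s+1} = n$, note that $h_n^{(1)} = h_n^{(2)} = n$, so $n \notin \{i_1, \ldots, i_s\}$; the first clause of Theorem~\ref{thm:rel_pseudocomp_a} still sets $h_n = h_n^{(2)} = n$, and for $i_s < i < n$ the copy rule again applies, propagating this value leftward. This yields $h_{i_s+1} = \cdots = h_n = h_n^{(2)} = h_{i_{s+1}}^{(2)}$, which is exactly the $k = s+1$ case. The only mild subtlety here is the boundary convention $i_{s+1} = n$: it encodes not a genuine reset index but rather the fact that the recursion explicitly initializes $h_n = h_n^{(2)}$; aside from accounting for this, the argument is purely bookkeeping and poses no real obstacle.
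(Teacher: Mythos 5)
Your proposal is correct and takes essentially the same route as the paper: the published proof simply adapts Corollary~\ref{cor:rel_pseudocomp_paths} (which is exactly this unfolding of the right-to-left recursion, carried out in $\LL_{n,m}$), observing the same two facts you isolate, namely that $i_{s}<n$ and $h_{n}^{(2)}=n$. Your block-by-block unrolling of Theorem~\ref{thm:rel_pseudocomp_a}, including the treatment of the rightmost block via the convention $i_{s+1}=n$, is the same bookkeeping argument performed directly in type $A$.
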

\begin{proof}
	This is the statement of Corollary~\ref{cor:rel_pseudocomp_paths} adapted to the current situation.  Observe that we always have $i_{s}<n$ and $h_{n}^{(2)}=n$.
\end{proof}

\begin{corollary}\label{cor:pseudocomp_a}
	Let $\pf\in D_{n}^{A}$ with height sequence $\hh_{\pf}=(h_{1},h_{2},\ldots,h_{n})$.  The pseudocomplement of $\pf$ is the Dyck path $\pf^{\cc}\in D_{n}^{A}$ determined by the height sequence $\hh_{\pf^{c}}=(h_{1}^{\cc},h_{2}^{\cc},\ldots,h_{n}^{\cc})$ with
	\begin{displaymath}
		h_{i}^{\cc} = \begin{cases}h_{i+1}^{\cc}, & \text{if}\;i<n\;\text{and}\;h_{i}=i,\\ i, & \text{if}\;i=n,\;\text{or}\;i<n\;\text{and}\;h_{i}>i.\end{cases}
	\end{displaymath}
\end{corollary}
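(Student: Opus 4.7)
The plan is to obtain this corollary as a direct specialization of Theorem~\ref{thm:rel_pseudocomp_a}. By definition, the pseudocomplement of $\pf$ is the relative pseudocomplement $\pf\to_{D}\mathfrak{0}^{A}$, where $\mathfrak{0}^{A}$ is the least element of $\DD_{n}^{A}$. Under the identification of Dyck paths with height sequences from Lemma~\ref{lem:height_sequence_a}, the staircase path $\mathfrak{0}^{A}$ has height sequence $(1,2,\ldots,n)$, since this is the componentwise smallest sequence satisfying $i\leq h_{i}\leq n$.

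With this choice of $\pf_{2}$, I would plug into the formula of Theorem~\ref{thm:rel_pseudocomp_a}, setting $\hh_{\pf_{1}}=\hh_{\pf}=(h_{1},h_{2},\ldots,h_{n})$ and $h_{i}^{(2)}=i$. The substitutions then read: $h_{i}^{(1)}\leq h_{i}^{(2)}$ becomes $h_{i}\leq i$, while $h_{i}^{(1)}>h_{i}^{(2)}$ becomes $h_{i}>i$. The key observation, and the only nontrivial input, is that for any $\pf\in D_{n}^{A}$ Lemma~\ref{lem:height_sequence_a} forces $h_{i}\geq i$, so the inequality $h_{i}\leq i$ is equivalent to the equality $h_{i}=i$. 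This turns the two cases of Theorem~\ref{thm:rel_pseudocomp_a} into the two cases stated in Corollary~\ref{cor:pseudocomp_a}, and in the case $i=n$ we moreover have $h_{n}^{(2)}=n$, matching the boundary value $h_{n}^{\cc}=n$ coming from the first branch of Theorem~\ref{thm:rel_pseudocomp_a}.

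No further verification is needed: Theorem~\ref{thm:rel_pseudocomp_a} already guarantees that the resulting sequence is a valid height sequence of some element of $D_{n}^{A}$, and that this element is in fact the relative pseudocomplement. There is no genuine obstacle here; the only thing to be careful about is bookkeeping between the two regimes $h_{i}=i$ and $h_{i}>i$, and the observation that since $h_{n}=n$ always holds in $D_{n}^{A}$, the recursion is well-founded (it terminates at $h_{n}^{\cc}=n$).
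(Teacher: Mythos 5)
Your proposal is correct and matches the paper's own proof: the paper likewise identifies the least element $\mathfrak{0}^{A}$ with the height sequence $(1,2,\ldots,n)$ and obtains the formula by specializing Theorem~\ref{thm:rel_pseudocomp_a} to $\pf_{2}=\mathfrak{0}^{A}$, with the same observation that $h_{i}\leq i$ collapses to $h_{i}=i$ because $h_{i}\geq i$ holds in $D_{n}^{A}$.
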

\begin{proof}
	By definition the least element of $\DD_{n}^{A}$ is the Dyck path $\mathfrak{0}^{A}$ determined by the height sequence $(1,2,\ldots,n)$, and the pseudocomplement of $\pf$ is $\pf\to\mathfrak{0}^{A}$.  Now the result follows by applying Theorem~\ref{thm:rel_pseudocomp_a}. 
\end{proof}

\begin{proposition}\label{prop:regular_a}
	A Dyck path $\pf\in D_{n}^{A}$ is regular if and only if it its height sequence $\hh_{\pf}=(h_{1},h_{2},\ldots,h_{n})$ satisfies either $h_{i}=i$ or if $h_{i}>i$, say $h_{i}=s$, then $h_{i}=h_{i+1}=\cdots=h_{s}=s$.
\end{proposition}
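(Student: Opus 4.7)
The plan is to compute $(\pf^{\cc})^{\cc}$ by applying Corollary~\ref{cor:pseudocomp_a} twice and then compare the resulting height sequence with $\hh_{\pf}$ coordinate by coordinate.  The main point is to unfold the right-to-left recursion for the pseudocomplement into a closed form, so that the second application can be carried out cleanly.

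Unwinding the recursion of Corollary~\ref{cor:pseudocomp_a} downward from the base case $h_n^{\cc}=n$, a short induction on $n-i$ shows that $h_i^{\cc}$ equals the smallest index $j\in\{i,i+1,\ldots,n\}$ for which $j=n$ or $h_j>j$.  In particular, for $i<n$ one has $h_i^{\cc}=i$ precisely when $h_i>i$, and $h_i^{\cc}>i$ precisely when $h_i=i$.  Feeding this into Corollary~\ref{cor:pseudocomp_a} once more, now with $\pf^{\cc}$ in place of $\pf$, the same telescoping yields
\begin{displaymath}
  h_i^{\cc\cc}\;=\;\min\bigl\{j\in\{i,i+1,\ldots,n\}\mid h_j=j\bigr\},
\end{displaymath}
where the set on the right is nonempty because $h_n=n$.

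It remains to determine when $h_i^{\cc\cc}=h_i$ for every $i\in[n]$.  If $h_i=i$ the two values trivially coincide.  If $h_i=c>i$, then monotonicity of $\hh_{\pf}$ forces $h_j\geq c>j$ for every $i\leq j<c$, so $h_i^{\cc\cc}\geq c$; equality $h_i^{\cc\cc}=c$ is therefore equivalent to $h_c=c$, and this, combined with $h_i=c$ and monotonicity, is equivalent to $h_i=h_{i+1}=\cdots=h_c=c$.  Running the argument index by index yields exactly the characterization claimed in the proposition.  The one step requiring real care is the double right-to-left induction that produces the closed form for $h_i^{\cc\cc}$; once it is in hand the comparison with $h_i$ is a direct case distinction using only monotonicity.
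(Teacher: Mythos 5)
Your proof is correct and follows essentially the same route as the paper: both arguments compute $(\pf^{\cc})^{\cc}$ by applying Corollary~\ref{cor:pseudocomp_a} twice and then compare the result with $\hh_{\pf}$ using monotonicity and $h_{n}=n$. Your closed form $h_{i}^{\cc\cc}=\min\bigl\{j\geq i\mid h_{j}=j\bigr\}$ is just a compact repackaging of the paper's case-by-case analysis, and the downward induction you invoke does establish it.
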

\begin{proof}
	Let $\pf\in D_{n}^{A}$ have height sequence $\hh_{\pf}=(h_{1},h_{2},\ldots,h_{n})$, and let $\hh_{\pf^{\cc}}=\bigl(h_{1}^{\cc},h_{2}^{\cc},\ldots,h_{n}^{\cc}\bigr)$ and $\hh_{(\pf^{\cc})^{\cc}}=\bigl(h_{1}^{\cc\cc},h_{2}^{\cc\cc},\ldots,h_{n}^{\cc\cc}\bigr)$ be the height sequences of $\pf^{\cc}$ and $(\pf^{\cc})^{\cc}$, respectively.

	First suppose that $\pf$ is regular.  If $\pf$ is the least element, its height sequence clearly satisfies the desired conditions.  So suppose that $\pf$ is not the least element.  Hence we can find some $i\in[n-1]$ with $h_{i}>i$.  Corollary~\ref{cor:pseudocomp_a} implies that $h_{i}=h_{i}^{\cc\cc}=h_{i+1}^{\cc\cc}=h_{i+1}$, since $\pf$ is regular.  Since $h_{n}=n$, there must be some minimal index $s>i$ such that $h_{s}=s$, and we find that $h_{i}=h_{i+1}=\cdots=h_{s}=s$ as desired.  

	\smallskip

	Conversely, suppose that $\hh_{\pf}$ has the given properties.  Then either $h_{i}=i$ for all $i\in[n]$, and $\pf$ is the least element and hence regular.  Or there is some $i\in[n-1]$ with $h_{i}>i$, say $h_{i}=s$.  By assumption we have $h_{i}=h_{i+1}=\cdots=h_{s}=s$, and Corollary~\ref{cor:pseudocomp_a} implies $h_{j}^{\cc}=j$ for $j<s$ and $h_{s}^{\cc}=h_{s+1}^{\cc}$.  By using Corollary~\ref{cor:pseudocomp_a} once again, we obtain $h_{i}^{\cc\cc}=h_{i+1}^{\cc\cc}=\cdots=h_{s}^{\cc\cc}=s$ as desired.
\end{proof}

\begin{example}
	The highlighted Dyck paths in Figure~\ref{fig:dominance_a4} are the regular elements of $\DD_{4}^{A}$.  The Dyck path $\pf\in D_{4}^{A}$ given by the height sequence $\hh_{\pf}=(2,3,3,4)$ is for instance not regular, since $\pf^{\cc}$ has height sequence $\hh_{\pf^{\cc}}=(1,2,4,4)$, and $(\pf^{\cc})^{\cc}$ has height sequence $\hh_{(\pf^{\cc})^{\cc}}=(3,3,3,4)$.
\end{example}

A Dyck path $\pf\in D_{n}^{A}$ has a \alert{return at $i$} if the coordinate $(i,i)$ belongs to the path, or equivalently if the prefix of $w_{\pf}$ having length $2i$ contains precisely $i$ times the letter $u$ and $i$ times the letter $r$.  Two returns, say at $i$ and $j$ for $i<j$, are \alert{consecutive} if $\pf$ does not have a return at some $k$ with $i<k<j$.  The returns at $0$ and $n$ are \alert{trivial}.  We can now reformulate Proposition~\ref{prop:regular_a} as follows.

\begin{corollary}\label{cor:regular_paths_a}
	A Dyck path $\pf\in D_{n}^{A}$ is regular if and only if for every pair $(i,j)$ with $0\leq i<j\leq n$ the following is satisfied: if $\pf$ has consecutive returns at $i$ and $j$, then the coordinate $(i,j)$ belongs to the path. 
\end{corollary}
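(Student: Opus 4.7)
The plan is to translate the corollary's condition into the height-sequence language of Proposition~\ref{prop:regular_a} via two basic observations. First, I would establish that $\pf$ has a return at $i$ if and only if $h_{i} = i$ (with the convention $h_{0} = 0$ and recalling that $h_{n} = n$ always). This follows immediately from comparing the definition of a return (the length-$2i$ prefix of $w_{\pf}$ contains exactly $i$ letters $u$ and $i$ letters $r$) with the definition $h_{i} = $ number of $u$'s before the $i\th$ occurrence of $r$. Second, for $0 \leq i < n$ and $h_{i} \leq j$, the point $(i,j)$ lies on $\pf$ if and only if $j \leq h_{i+1}$, since at abscissa $i$ the path visits precisely the lattice points $(i,y)$ with $h_{i} \leq y \leq h_{i+1}$.

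With this dictionary in hand, suppose $i < j$ are consecutive returns, so that $h_{i} = i$, $h_{j} = j$, and $h_{k} > k$ for $i < k < j$. The condition ``$(i,j)$ lies on $\pf$'' then becomes $h_{i+1} \geq j$; combined with monotonicity and $h_{j} = j$, this collapses to
\[
	h_{i+1} = h_{i+2} = \cdots = h_{j-1} = j,
\]
a condition that is vacuously true when $j = i+1$. The corollary therefore reduces to showing that $\pf$ is regular if and only if this block-of-$j$'s equality holds between every pair of consecutive returns.

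For the forward direction, I would apply Proposition~\ref{prop:regular_a} at index $k = i+1$ whenever $j > i+1$. Setting $c = h_{i+1}$, regularity forces $h_{c} = c$, so $c$ is itself a return. Since $i+1 \leq c \leq h_{j} = j$ and no return lies strictly between $i$ and $j$, we must have $c = j$, and then $h_{i+1} = \cdots = h_{j-1} = j$, as required. Conversely, for any $k$ with $h_{k} > k$, let $i$ be the largest return smaller than $k$ and $j$ the smallest return at least $k$ (both exist since $0$ and $n$ are always returns). The hypothesis gives $h_{i+1} = \cdots = h_{j-1} = j$, hence $h_{k} = j$, and then $h_{k} = h_{k+1} = \cdots = h_{j} = j$, which is precisely the regularity condition at $k$ with $c = j$.

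Once the dictionary between returns and height-sequence entries is set up, the rest is a bookkeeping exercise. The only mild care required concerns the degenerate cases $j = i+1$, the trivial returns at $0$ and $n$, and the final index $k = n-1$; none of these presents any genuine difficulty.
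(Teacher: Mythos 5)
Your proof is correct and follows essentially the same route as the paper: translate returns and membership of $(i,j)$ into height-sequence conditions ($h_{i}=i$, respectively $h_{i+1}\geq j$) and reduce the statement to Proposition~\ref{prop:regular_a}, checking both directions between each pair of consecutive returns.  One remark: in your converse direction you (rightly) allow the left return to be the trivial one at $0$, which the corollary's stated range $1\leq i<j\leq n$ formally excludes---the paper's own proof implicitly does the same, and the pair $(0,j)$ is genuinely needed, as the non-regular path with $\hh_{\pf}=(2,3,3,4)$ shows, so this is a defect of the statement's wording rather than of your argument.
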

\begin{proof}
	Let $\pf\in D_{n}^{A}$ have height sequence $\hh_{\pf}=(h_{1},h_{2},\ldots,h_{n})$ and suppose that $\pf$ has two consecutive returns at $i$ and $j$ for $i<j$.
	
	First suppose that $\pf$ passes through $(i,j)$.  If $j=i+1$, then we have $h_{i}=i$ and $h_{i+1}=i+1$.  If $j>i+1$, then (since the path passes through $(i,j)$) we have $h_{i}=i$ and $h_{i+1}=h_{i+2}=\cdots=h_{j}=j$.  It follows that $\hh_{\pf}$ satisfies the conditions of Proposition~\ref{prop:regular_a}, and $\pf$ is thus regular.
	
	Conversely, suppose that $\pf$ does not pass through $(i,j)$.  Then we have $h_{i+1}=j'<j$.  Since $i$ and $j$ are consecutive returns it follows that $h_{j'}>j'$, and there must be some minimal $s\in[n]$ with $i+1<s<j'$ and $h_{i+1}=h_{s}<h_{s+1}$.  Again since $i$ and $j$ are consecutive returns, we obtain $h_{i+1}=h_{i+2}=\cdots=h_{s}<s$, which in view of Proposition~\ref{prop:regular_a} implies that $\pf$ is not regular.
\end{proof}

Since the regular elements of a Heyting algebra form a Boolean subalgebra, it is immediately clear that the number of regular elements equals $2^{k}$ for some $k\in\mathbb{N}$.  In view of Corollary~\ref{cor:regular_paths_a} we can be more precise.

\begin{corollary}\label{cor:number_regular_a}
	The number of regular elements of $\DD_{n}^{A}$ is $2^{n-1}$ for $n>0$. 
\end{corollary}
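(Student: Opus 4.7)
The plan is to exhibit an explicit bijection between the regular elements of $\DD_{n}^{A}$ and the power set of $\{1, 2, \ldots, n-1\}$, then conclude by counting. The natural candidate map sends a regular path $\pf$ to its set of non-trivial returns $R_{\pf} = \{i \in \{1, \ldots, n-1\} \mid h_{i} = i\}$; Proposition~\ref{prop:regular_a} already strongly suggests that a regular path's height sequence is rigidly determined by where these returns sit.

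For injectivity I would unpack Proposition~\ref{prop:regular_a} in detail. Let $\pf$ be regular with full return set $0 = r_{0} < r_{1} < \cdots < r_{k} = n$, and fix an index $i$ with $r_{s-1} < i < r_{s}$. Since $i$ is not a return, $h_{i} > i$, and the proposition forces $h_{i} = h_{i+1} = \cdots = h_{c} = c$ for $c = h_{i}$. In particular $h_{c} = c$, so $c$ is itself a return; and since $c > i > r_{s-1}$, the equality $h_{r_{s}} = r_{s}$ pins down $c = r_{s}$ (otherwise $r_{s}$ would lie strictly between $i$ and $c$, giving $h_{r_{s}} = c > r_{s}$, a contradiction). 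Hence $h_{j} = r_{s}$ for every $j \in \{r_{s-1}+1, \ldots, r_{s}\}$, and the entire sequence $\hh_{\pf}$ is recovered from $R_{\pf}$ alone.

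For surjectivity, given any subset $\{r_{1} < r_{2} < \cdots < r_{k-1}\} \subseteq \{1, \ldots, n-1\}$, I would set $r_{0} = 0$, $r_{k} = n$, and define a candidate height sequence by $h_{i} = r_{s}$ whenever $r_{s-1} < i \leq r_{s}$. This sequence is weakly increasing, satisfies $h_{n} = n$, and obeys $h_{i} \geq i$; hence by Lemma~\ref{lem:height_sequence_a} it is the height sequence of a genuine $\pf \in D_{n}^{A}$. The block structure of $\hh_{\pf}$ matches the pattern demanded by Proposition~\ref{prop:regular_a} verbatim, so $\pf$ is regular with $R_{\pf} = \{r_{1}, \ldots, r_{k-1}\}$.

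Combining the two steps produces the desired bijection, giving the count $2^{n-1}$. I do not anticipate serious obstacles: the heavy lifting has been packaged into Proposition~\ref{prop:regular_a} (and, equivalently, Corollary~\ref{cor:regular_paths_a}), and the only delicate point is the identification $c = r_{s}$ in the injectivity step, which is forced by the minimality of $r_{s}$ among returns strictly exceeding $r_{s-1}$.
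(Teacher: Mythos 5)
Your proposal is correct and takes essentially the same approach as the paper: the paper's proof also counts regular elements by identifying them bijectively with subsets of $[n-1]$ via their sets of non-trivial returns, merely citing Corollary~\ref{cor:regular_paths_a} (the geometric reformulation of Proposition~\ref{prop:regular_a}) where you unwind Proposition~\ref{prop:regular_a} directly. Your detailed injectivity and surjectivity arguments, including the identification $c=r_{s}$, simply make explicit what the paper leaves implicit.
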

\begin{proof}
	Corollary~\ref{cor:regular_paths_a} implies that for every subset $\{i_{1},i_{2},\ldots,i_{k}\}$ of $[n-1]$ there exists a unique Dyck path $\pf\in D_{n}^{A}$ such that $i_{j}$ is a non-trivial return for $j\in[k]$, and these are precisely the regular paths of $\DD_{n}^{A}$.  This yields the claim.
\end{proof}

\subsection{Type $B$}
	\label{sec:formula_b}
In this section we compute explicit formulas for the relative pseudocomplement and the pseudocomplement in $\DD_{n}^{B}$, and we characterize the regular elements.  In view of Theorem~\ref{thm:dyck_algebras} we can view $\DD_{n}^{B}$ as a Heyting subalgebra of $\DD_{2n}^{A}$, and could in principle derive these formulas from Theorem~\ref{thm:rel_pseudocomp_a}.  However, the formula for the height function of the centrally symmetric Dyck paths in Lemma~\ref{lem:dyck_b_automorphism_height} seems to be rather unhandy, so we prefer a direct computation.  

\begin{theorem}\label{thm:rel_pseudocomp_b}
	Let $\pf_{1},\pf_{2}\in D_{n}^{B}$ with height sequences $\hh_{\pf_{1}}=\bigl(h_{1}^{(1)},h_{2}^{(1)},\ldots,h_{k_{1}}^{(1)}\bigr)$ and $\hh_{\pf_{2}}=\bigl(h_{1}^{(2)},h_{2}^{(2)},\ldots,h_{k_{2}}^{(2)}\bigr)$, respectively.  The relative pseudocomplement $\pf_{1}\to\pf_{2}$ is the Dyck path $\pf\in D_{n}^{B}$ determined by the height sequence $\hh_{\pf}=(h_{1},h_{2},\ldots,h_{k})$ with
	\begin{displaymath}
		k = \begin{cases}k_{2}, & \text{if}\;k_{1}<k_{2},\;\text{or}\;k_{1}\geq k_{2}\;\text{and}\;h_{k_{2}}^{(1)}>h_{k_{2}}^{(2)},\\ \max\bigl\{i\in[k_{2}]\mid h_{i}^{(1)}>h_{i}^{(2)}\bigr\}+1, & \text{if}\;k_{1}\geq k_{2}\;\text{and}\;h_{k_{2}}^{(1)}\leq h_{k_{2}}^{(2)}.\end{cases}
	\end{displaymath}
	If $k_{1}<k_{2}$, then
	\begin{displaymath}
		h_{i} = \begin{cases}h_{i+1}, & \text{if}\;i\leq k_{1}\;\text{and}\;h_{i}^{(1)}\leq h_{i}^{(2)},\\ h_{i}^{(2)}, & \text{if}\;i>k_{1},\;\text{or}\;i\leq k_{1}\;\text{and}\;h_{i}^{(1)}>h_{i}^{(2)},\end{cases}
	\end{displaymath}
	and if $k_{1}\geq k_{2}$, then
	\begin{displaymath}
		h_{i} = \begin{cases}2n-k+1, & \text{if}\;i=k\;\text{and}\;h_{k}^{(1)}\leq h_{k}^{(2)},\\ h_{i+1}, & \text{if}\;i<k\;\text{and}\;h_{i}^{(1)}\leq h_{i}^{(2)},\\ h_{i}^{(2)}, & \text{if}\;i\leq k\;\text{and}\;h_{i}^{(1)}>h_{i}^{(2)}.\end{cases}
	\end{displaymath}
\end{theorem}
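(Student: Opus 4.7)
The plan is to parallel the proofs of Theorems~\ref{thm:rel_pseudocomp_paths} and \ref{thm:rel_pseudocomp_a} and verify three things in turn: (i) the proposed sequence $\hh_{\pf}$ satisfies the conditions of Lemma~\ref{lem:height_sequence_b}, so it actually determines an element of $D_{n}^{B}$; (ii) $\pf_{1}\wedge_{D}\pf\leq_{D}\pf_{2}$, using the explicit meet formula from Theorem~\ref{thm:distributive_b}; and (iii) any $\pf'\in D_{n}^{B}$ with $\pf_{1}\wedge_{D}\pf'\leq_{D}\pf_{2}$ satisfies $\pf'\leq_{D}\pf$. Throughout I would treat the two cases $k_{1}<k_{2}$ and $k_{1}\geq k_{2}$ separately, since the output length $k$ is governed by very different considerations in each.

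For step (i), in the case $k_{1}<k_{2}$ the output length is $k=k_{2}$, the boundary value $h_{k_{2}}$ is inherited directly from $\hh_{\pf_{2}}$, and weak monotonicity together with the bound $h_{i}\geq i$ follow from the same ``chase right to the next disagreement'' argument as in Theorem~\ref{thm:rel_pseudocomp_paths}. In the case $k_{1}\geq k_{2}$ the output length $k\in[k_{2}]$ is chosen so that either the last index where $h_{i}^{(1)}>h_{i}^{(2)}$ equals $k$, or $k$ is the successor of that index; in the latter subcase the first clause of the height formula forces $h_{k}=2n-k+1$, which matches a type-$B$ Dyck word that ends in an up-step. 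Monotonicity and the lower bound $h_{i}\geq i$ then go through verbatim as in Theorem~\ref{thm:rel_pseudocomp_a}.

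For step (ii), I would invoke the meet formula from Theorem~\ref{thm:distributive_b}: the sequence $\hh_{\pf_{1}\wedge_{D}\pf}$ has length $\max\{k_{1},k\}$, is computed entrywise as $\min\{h_{i}^{(1)},h_{i}\}$ up to $\min\{k_{1},k\}$, and is copied from the longer of the two beyond that. The required inequality $\min\{h_{i}^{(1)},h_{i}\}\leq h_{i}^{(2)}$ for $i\leq\min\{k,k_{2}\}$ reduces in each subcase to the comparison already carried out in Theorem~\ref{thm:rel_pseudocomp_paths}, and the length condition $\max\{k_{1},k\}\geq k_{2}$ follows from the definition of $k$. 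For step (iii), given $\pf'$ with height sequence $(h'_{1},\ldots,h'_{k'})$ satisfying $\pf_{1}\wedge_{D}\pf'\leq_{D}\pf_{2}$, I take a maximal index $i\leq\min\{k,k'\}$ with $h'_{i}>h_{i}$: if $h_{i}=h_{i+1}$, or $h_{i}$ equals the prescribed boundary value at $i=k$, then maximality forces $h'_{i}$ to violate monotonicity or the type-$B$ boundary constraint of Lemma~\ref{lem:height_sequence_b}, contradicting $\pf'\in D_{n}^{B}$; if $h_{i}=h_{i}^{(2)}$, then $\min\{h_{i}^{(1)},h'_{i}\}>h_{i}^{(2)}$ contradicts the meet inequality. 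The length constraint $k'\leq k$ is handled by applying the same dichotomy at the right endpoint.

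The main obstacle will be the bookkeeping in the subcase $k_{1}\geq k_{2}$ with $h_{k_{2}}^{(1)}\leq h_{k_{2}}^{(2)}$, where the output length $k$ is strictly smaller than $k_{2}$. Here I need to verify both that the prescribed endpoint value $h_{k}=2n-k+1$ is compatible with Lemma~\ref{lem:height_sequence_b} (in particular, that $k\geq 1$ whenever $\pf_{1}\not\leq_{D}\pf_{2}$, and that shortening the sequence does not accidentally drop some $h_{i}$ below $i$), and also that no $\pf'$ of length strictly less than $k$ can beat $\pf$ while still satisfying the meet inequality. Once those endpoint compatibilities are sorted out, the heart of the proof becomes a direct type-$B$ translation of the type-$A$ argument, with the monotone-lattice-path proof of Theorem~\ref{thm:rel_pseudocomp_paths} as a template.
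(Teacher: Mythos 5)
Your plan follows essentially the same route as the paper's proof: verify via Lemma~\ref{lem:height_sequence_b} that the proposed sequence is a genuine type-$B$ height sequence, check $\pf_{1}\wedge_{D}\pf\leq_{D}\pf_{2}$ using the explicit meet formula of Theorem~\ref{thm:distributive_b} with the same case split $k_{1}<k_{2}$ versus $k_{1}\geq k_{2}$, and establish maximality by a maximal disagreeing index plus a separate argument that no valid competitor can have a height sequence shorter than $k$ (the paper runs this last point first, as a minimality-of-$k$ discussion, but it is the same dichotomy at the right endpoint that you describe). One small correction: since longer height sequences are smaller in the type-$B$ dominance order, the length condition needed for $\pf'\leq_{D}\pf$ is $k'\geq k$, so what your endpoint argument must exclude is $k'<k$, not $k'\leq k$ as written.
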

\begin{proof}
	Let us first verify that the chosen $k$ is indeed the smallest possible value.  By definition of the dominance order, we observe that the length of the height sequence of $\pf_{1}\wedge_{D}\pf$ must be at least $k_{2}$ in order to satisfy $\pf_{1}\wedge_{D}\pf\leq_{D}\pf_{2}$, and this length is $\max\{k_{1},k\}$.  So, if $k_{1}<k_{2}$, then we have to put $k=k_{2}$.  If $k_{1}\geq k_{2}$, then we can make $k$ as small as possible.  First suppose that $h_{k_{2}}^{(1)}>h_{k_{2}}^{(2)}$.  In view of Lemma~\ref{lem:dyck_b_comparison}, this forces $k_{1}=k_{2}$, and if we choose $k<k_{1}=k_{2}$, then the entry at position $k_{2}$ in the height sequence of $\pf_{1}\wedge_{D}\pf$ is larger than $h_{k_{2}}^{(2)}$, which contradicts $\pf_{1}\wedge_{D}\pf\leq_{D}\pf_{2}$.  Hence the smallest possible value is $k=k_{2}$.  Now suppose that $h_{k_{2}}^{(1)}\leq h_{k_{2}}^{(2)}$.  In order to guarantee that $\pf_{1}\wedge_{D}\pf\leq_{D}\pf_{2}$, we have to choose $k$ at least to be equal to $\max\bigl\{i\in[k_{2}]\mid h_{i}^{(1)}>h_{i}^{(2)}\bigr\}$, and say this value is $j$.  If $h_{j}$ is the last entry of a height sequence of some Dyck path of type $B$, then Lemma~\ref{lem:height_sequence_b} implies $h_{j}\in\{2n-j+1,2n-j\}$.  At the same time, the condition on $\pf$ being a candidate for the relative pseudocomplement forces $h_{j}\leq h_{j}^{(2)}$, and since $j<k_{2}$ Lemma~\ref{lem:height_sequence_b} implies $h_{j}^{(2)}\leq 2n-k_{2}$.  If we put these facts together, we obtain $h_{j}\leq h_{j}^{(2)}\leq 2n-k_{2}<2n-j\leq h_{j}$, which is a contradiction.  We therefore have to choose $k=j+1$, and can then make the corresponding value in the height sequence of $\pf$ as big as possible.

	\smallskip
	
	Now we verify that $\hh_{\pf}$ is indeed the height sequence of some $\pf\in D_{n}^{B}$.  By construction, we have $k\leq k_{2}\leq n$, and either $h_{k}=2n-k+1$ or $h_{k}=h_{k_{2}}^{(2)}$.  Next we show that $h_{i}\leq h_{i+1}$ for $i\in[k]$.  If $h_{i}=h_{i+1}$, we are done, otherwise $h_{i}=h_{i}^{(2)}$.  In this case we need to look at $h_{i+1}$.  If $h_{i+1}=h_{i+2}=\cdots=h_{k}$, then there are two choices, either $h_{k}=h_{k}^{(2)}\geq h_{i}^{(2)}=h_{i}$, and we are done, or $h_{k}=2n-k+1\geq 2n-k_{2}+1\geq h_{k_{2}}^{(2)}\geq h_{i}^{(2)}=h_{i}$ since $k\leq k_{2}$.  If there is some minimal $s<k$ with $h_{s}=h_{s}^{(2)}$, then we obtain $h_{i}=h_{i+1}=\cdots=h_{s}=h_{s}^{(2)}\geq h_{i}^{(2)}=h_{i}$, and we are done.  Lastly we show that $h_{i}\geq i$ for $i\in[k]$.  This is by construction satisfied for $h_{k}$.  So choose $i<k$ maximal such that $h_{i}<i$.  We have two choices, either $h_{i}=h_{i+1}$ and the maximality of $i$ implies $i>h_{i}=h_{i+1}\geq i+1$ which is a contradiction, or $h_{i}=h_{i}^{(2)}$ and the fact that $\pf_{2}$ is a Dyck path of type $B$ implies $i>h_{i}=h_{i}^{(2)}\geq i$, which is a contradiction as well.  Hence Lemma~\ref{lem:height_sequence_b} implies that $\hh_{\pf}$ is indeed the height sequence of some $\pf\in D_{n}^{B}$.  
	
	\smallskip

	Now we show that $\pf$ satisfies $\pf_{1}\wedge_{D}\pf\leq_{D}\pf_{2}$, and let us write $\hh_{\pf_{1}\wedge\pf}=(\bar{h}_{1},\bar{h}_{2},\ldots,\bar{h}_{\bar{k}})$.  We distinguish two cases.
	
	(i) Let $k_{1}<k_{2}$.  In this case we have $\bar{k}=k_{2}$, and $\bar{h}_{i}=h_{i}=h_{i}^{(2)}$ for $i>k_{1}$.  For $i<k_{1}$, we have $\bar{h}_{i}=\min\bigl\{h_{i}^{(1)},h_{i}\bigr\}\leq h_{i}^{(2)}$, and we are done.
	
	(ii) Let $k_{1}\geq k_{2}$.  Since $k\leq k_{2}$, we have $\bar{k}=k_{1}$.  If $i>k$, then by construction $\bar{h}_{i}=h_{i}^{(1)}\leq h_{i}^{(2)}$, and if $i\leq k$, then $\bar{h}_{i}=\min\bigl\{h_{i}^{(1)},h_{i}\bigr\}\leq h_{i}^{(2)}$.  

	 \smallskip

	It remains to show that $\pf$ is indeed the relative pseudocomplement of $\pf_{1}$ with respect to $\pf_{2}$.  Suppose there is some $\pf'\in D_{n}^{B}$ with $\pf_{1}\wedge_{D}\pf'\leq_{D}\pf_{2}$ and $\pf<_{D}\pf'$.  By the choice of $k$, we conclude that $\pf'$ has height sequence $\hh_{\pf'}=(h'_{1},h'_{2},\ldots,h'_{k})$, and by definition of the dominance order there must be a maximal index $i\in[k]$ with $h_{i}<h'_{i}$.  If $i=k$, then by construction we have either $h'_{k}>2n-k+1$, which contradicts Lemma~\ref{lem:height_sequence_b}, or $h'_{k}>h_{k}^{(2)}$, which contradicts the assumption $\pf_{1}\wedge_{D}\pf'\leq_{D}\pf_{2}$.  If $i<k$, then we have two choices again.  Either $h'_{i}>h_{i}=h_{i+1}=h'_{i+1}$ by the maximality of $i$, and this contradicts Lemma~\ref{lem:height_sequence_b}, or $h'_{i}>h_{i}=h_{i}^{(2)}$, which contradicts the assumption $\pf_{1}\wedge_{D}\pf'\leq_{D}\pf_{2}$ again.  This concludes the proof.	
\end{proof}

The following corollary is immediate.

\begin{corollary}\label{cor:pseudocomp_b}
	Let $\pf\in D_{n}^{B}$ with height sequence $\hh_{\pf}=(h_{1},h_{2},\ldots,h_{k})$.  The pseudocomplement of $\pf$ is the Dyck path $\pf^{\cc}\in D_{n}^{B}$ determined by the height sequence $\hh_{\pf^{\cc}}=\bigl(h_{1}^{\cc},h_{2}^{\cc},\ldots,h_{k'}^{\cc}\bigr)$ with
	\begin{displaymath}
		k' = \begin{cases}n, & \text{if}\;k<n,\;\text{or}\;k=n\;\text{and}\;h_{n}=n+1,\\ \max\bigl\{i\in[n]\mid h_{i}>i\bigr\}+1, & \text{if}\;k=n\;\text{and}\;h_{n}=n.\end{cases}
	\end{displaymath}
	If $k<n$, then
	\begin{displaymath}
		h_{i}^{\cc} = \begin{cases}h_{i+1}^{\cc}, & \text{if}\;i\leq k\;\text{and}\;h_{i}=i,\\ i, & \text{if}\;i>k,\;\text{or}\;i\leq k\;\text{and}\;h_{i}>i,\end{cases}
	\end{displaymath}
	and if $k=n$, then
	\begin{displaymath}
		h_{i}^{\cc} = \begin{cases}2n-k'+1, & \text{if}\;i=k'\;\text{and}\;h_{k'}=k',\\ h_{i+1}^{\cc}, & \text{if}\;i<k'\;\text{and}\;h_{i}=i,\\ i, & \text{if}\;i\leq k'\;\text{and}\;h_{i}>i.\end{cases}
	\end{displaymath}
\end{corollary}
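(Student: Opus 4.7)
The plan is to recognize the corollary as a direct application of Theorem~\ref{thm:rel_pseudocomp_b} with $\pf_2$ taken to be the least element $\mathfrak{0}^B$ of $\DD_n^B$, since by definition $\pf^{\cc}=\pf\to_{D}\mathfrak{0}^B$. First I would identify $\mathfrak{0}^B$: among all Dyck paths of type $B$, the componentwise smallest height sequence (and the one with the maximal length $k$, as required by the type-$B$ dominance order) is $(1,2,\ldots,n)$, which is the type-$A$ staircase viewed inside $\DD_n^B$. Thus $\mathfrak{0}^B$ corresponds to $k_2=n$ and $h_i^{(2)}=i$ for all $i\in[n]$.

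Next I would substitute these values into the formulas of Theorem~\ref{thm:rel_pseudocomp_b}, taking $\pf_1=\pf$, $k_1=k$, and $h_i^{(1)}=h_i$. Because Lemma~\ref{lem:height_sequence_b} guarantees $h_i\geq i$, the comparison $h_i^{(1)}\leq h_i^{(2)}$ collapses to the equality $h_i=i$, while $h_i^{(1)}>h_i^{(2)}$ becomes $h_i>i$. Matching cases for the length $k'$: the theorem's $k_1<k_2$ case gives $k<n$ and $k'=n$; the $k_1\geq k_2$ case with $h_{k_2}^{(1)}>h_{k_2}^{(2)}$ forces $k=n$ and $h_n=n+1$ (again yielding $k'=n$); and the $k_1\geq k_2$ case with $h_{k_2}^{(1)}\leq h_{k_2}^{(2)}$ forces $k=n$, $h_n=n$, and $k'=\max\bigl\{i\in[n-1]\mid h_i>i\bigr\}+1$. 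Combined, these are exactly the two cases in the corollary's formula for $k'$.

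Then I would translate the theorem's formulas for the individual entries $h_i^{\cc}$. The $k_1<k_2$ branch of the theorem (here $k<n$) yields precisely the corollary's first displayed formula (with $h_i^{(2)}=i$), and the $k_1\geq k_2$ branch (here $k=n$) yields the second, since $2n-k+1$ in the theorem becomes $2n-k'+1$ in the present notation. The only subtlety is that when $k=n$ and $h_n=n+1$ we are in the $k_1\geq k_2$ branch, so the corollary's $k=n$ formulas apply: at the index $i=k'=n$, the third clause $h_i>i$ triggers and returns $h_n^{\cc}=n$, producing a path ending in a right-step, consistent with Lemma~\ref{lem:height_sequence_b}.

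The proof really is mechanical substitution; the main care required is bookkeeping. The one genuine check is that the $h_i^{\cc}$ just produced forms a valid type-$B$ height sequence of length $k'$ terminating in the correct value $2n-k'$ or $2n-k'+1$, but this is already built into Theorem~\ref{thm:rel_pseudocomp_b}, so no additional argument beyond the substitution is needed.
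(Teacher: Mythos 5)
Your proposal is correct and follows exactly the paper's route: the paper likewise identifies the least element of $\DD_{n}^{B}$ as the path with height sequence $(1,2,\ldots,n)$ and obtains the corollary by substituting it as $\pf_{2}$ in Theorem~\ref{thm:rel_pseudocomp_b}, using $h_{i}\geq i$ to turn the comparisons into the dichotomy $h_{i}=i$ versus $h_{i}>i$. Your case bookkeeping (including the observation that $k=n$, $h_{n}=n+1$ lands in the $k_{1}\geq k_{2}$ branch and yields $h_{n}^{\cc}=n$) is just a more explicit write-up of the same two-line argument.
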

\begin{proof}
	By definition, the pseudocomplement of $\pf\in D_{n}^{B}$ is $\pf\to\mathfrak{0}$, where $\mathfrak{0}$ is the least element of $\DD_{n}^{B}$, whose height sequence is $\hh_{\mathfrak{0}}=(1,2,\ldots,n)$.  The result follows by applying Theorem~\ref{thm:rel_pseudocomp_b}.
\end{proof}

\begin{proposition}\label{prop:regular_b}
	A Dyck path $\pf\in D_{n}^{B}$ is regular if and only if its height sequence $\hh_{\pf}=(h_{1},h_{2},\ldots,h_{k})$ satisfies one of the following conditions:
	\begin{enumerate}[(1)]
		\item $h_{k}=n$, and for every $i\in[k-1]$ we have either $h_{i}=i$ or if $h_{i}>i$, say $h_{i}=s$, then $h_{i}=h_{i+1}=\cdots=h_{s}=s$;\quad or  
		\item $h_{k}=2n-k+1,h_{k-1}=k-1$, and for every $i\in[k-2]$ we have either $h_{i}=i$ or if $h_{i}>i$, say $h_{i}=s$, then $h_{i}=h_{i+1}=\cdots=h_{s}=s$.
	\end{enumerate}
\end{proposition}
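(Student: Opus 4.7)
The plan is to mimic the proof of Proposition~\ref{prop:regular_a}, applying Corollary~\ref{cor:pseudocomp_b} twice to compute $(\pf^{\cc})^{\cc}$ and determining precisely when this path equals $\pf$.  Write $\hh_{\pf}=(h_{1},h_{2},\ldots,h_{k})$, $\hh_{\pf^{\cc}}=\bigl(h_{1}^{\cc},\ldots,h_{k'}^{\cc}\bigr)$, and $\hh_{(\pf^{\cc})^{\cc}}=\bigl(h_{1}^{\cc\cc},\ldots,h_{k''}^{\cc\cc}\bigr)$; regularity amounts to $k''=k$ and $h_{i}^{\cc\cc}=h_{i}$ for all $i\in[k]$.

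The first step is to track how the length of the height sequence behaves under pseudocomplementation.  Corollary~\ref{cor:pseudocomp_b} distinguishes three situations for $\pf$: (a) $k<n$, (b) $k=n$ with $h_{n}=n+1$, or (c) $k=n$ with $h_{n}=n$.  In situations (a) and (b) we obtain $k'=n$ and $h_{n}^{\cc}=n$, so $\pf^{\cc}$ falls into situation (c); in situation (c) we have $k'=\max\bigl\{i\in[n-1]\mid h_{i}>i\bigr\}+1$ with $h_{k'}^{\cc}=2n-k'+1$, so $\pf^{\cc}$ falls into situation (a) or (b).  Thus the involution $\pf\mapsto\pf^{\cc}$ swaps \{(a),(b)\}\ with (c), which explains why two conditions appear in the proposition: condition~(1) corresponds to situation (c), and condition~(2) corresponds to situations (a) and (b).

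For the forward direction, suppose $\hh_{\pf}$ satisfies (1) or (2).  The prescribed \emph{plateau structure}---each position is either a diagonal point $h_{i}=i$ or the start of a plateau $h_{i}=h_{i+1}=\cdots=h_{c}=c$---is inherited by $\hh_{\pf^{\cc}}$ via Corollary~\ref{cor:pseudocomp_b}: each plateau $h_{i}=\cdots=h_{c}=c$ of $\pf$ corresponds to an equality chain $h_{i}^{\cc}=h_{i+1}^{\cc}=\cdots$ (so $h_{c}^{\cc}$ gets ``propagated backwards''), and each $h_{i}=i$ gives $h_{i}^{\cc}=i$.  Working through the two cases, one checks that $\hh_{\pf^{\cc}}$ itself has the same plateau shape (ending with whichever condition is dual to the one $\pf$ satisfies).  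A second application of Corollary~\ref{cor:pseudocomp_b} then recovers $\hh_{\pf}$ exactly.

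For the converse, I would contrapose: suppose $\hh_{\pf}$ violates both conditions.  Either $\pf$ has an ``illegal'' ending (meaning $h_{k}$ or $h_{k-1}$ does not match either admissible case), in which case tracking lengths through Corollary~\ref{cor:pseudocomp_b} gives $k''\neq k$; or there is a minimal index $i\in[k-1]$ (or $[k-2]$) with $h_{i}=c>i$ but $h_{c}\neq c$.  In this latter situation, following the same calculation as in Proposition~\ref{prop:regular_a}, computing $\pf^{\cc}$ produces a value $h_{i}^{\cc}=i$ whose backward-propagation when computing $(\pf^{\cc})^{\cc}$ stops before reaching $c$, producing $h_{i}^{\cc\cc}<c=h_{i}$.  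The main obstacle is the careful bookkeeping needed to handle the length change $k\to k'\to k''$---something absent in the type-$A$ setting---together with a simultaneous treatment of paths ending in $r$ versus $u$; once this case split is set up cleanly, both directions reduce to repeated application of Corollary~\ref{cor:pseudocomp_b}.
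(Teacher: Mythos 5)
Your overall plan is the paper's plan: apply Corollary~\ref{cor:pseudocomp_b} twice, keep track of how the length of the height sequence changes, and split according to whether the path ends on the diagonal or with an up-step; the observation that pseudocomplementation exchanges the case $k=n$, $h_{n}=n$ with the cases $k<n$ or $h_{n}=n+1$ is exactly the case structure the paper uses (the paper proves ``regular $\Rightarrow$ conditions'' directly from $h_{i}=h_{i}^{\cc\cc}$ rather than by contraposition, but that is a cosmetic difference). However, several of the concrete mechanisms you describe are wrong and would need repair. First, you have the two clauses of Corollary~\ref{cor:pseudocomp_b} interchanged: a plateau $h_{i}=\cdots=h_{c}=c$ produces the values $h_{j}^{\cc}=j$ for $i\leq j<c$ (not an equality chain), while it is the diagonal touches $h_{j}=j$ that produce the propagation $h_{j}^{\cc}=h_{j+1}^{\cc}$. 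Second, and more seriously, your converse argument rests on the claim that a plateau violation yields $h_{i}^{\cc\cc}<c=h_{i}$. This can never happen: in any Heyting algebra $\pf\leq_{D}(\pf^{\cc})^{\cc}$, so every entry of $\hh_{(\pf^{\cc})^{\cc}}$ is at least the corresponding entry of $\hh_{\pf}$. What actually goes wrong is an overshoot: if $h_{i}=c>i$ but $h_{c}>c$, the backward propagation in the second complementation runs to the first diagonal touch $d>c$, giving $h_{i}^{\cc\cc}=d>h_{i}$ (compare the paper's example $\hh_{\pf}=(2,3,3,4)$ with $\hh_{(\pf^{\cc})^{\cc}}=(3,3,3,4)$).

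Third, the claim that an ``illegal ending'' always shows up as $k''\neq k$ is false in one case: if $k<n$, the path ends with a right-step ($h_{k}=2n-k$), $h_{k-1}=k-1$, and the plateau conditions hold, then one computes $k''=k$ but $h_{k}^{\cc\cc}=2n-k+1\neq 2n-k=h_{k}$; for instance $n=3$, $\hh_{\pf}=(1,4)$ has $\hh_{\pf^{\cc}}=(2,2,3)$ and $\hh_{(\pf^{\cc})^{\cc}}=(1,5)$. So non-regularity of such paths is detected by the last entry, not by the length. None of these errors is fatal to the strategy---once the action of Corollary~\ref{cor:pseudocomp_b} is applied with the cases the right way around, both directions go through essentially as in the paper---but as written the key inequality you invoke in the converse is one that is never satisfied, so the argument does not yet constitute a proof.
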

\begin{proof}
	Let $\pf\in D_{n}^{B}$ have height sequence $\hh_{\pf}=(h_{1},h_{2},\ldots,h_{k})$, and let $\hh_{\pf^{\cc}}=\bigl(h_{1}^{\cc},h_{2}^{\cc},\ldots,h_{k'}^{\cc}\bigr)$ and $\hh_{(\pf^{\cc})^{\cc}}=\bigl(h_{1}^{\cc\cc},h_{2}^{\cc\cc},\ldots,h_{k''}^{\cc\cc}\bigr)$ be the height sequences of $\pf^{\cc}$ and $(\pf^{\cc})^{\cc}$, respectively.

	\smallskip
	
	First suppose that $\pf$ is regular.  We distinguish two cases.
	
	(i)  Let $h_{k}=n$.  This can only happen if $k=n$.  If $h_{i}=i$ for all $i\in[n]$, then $\pf$ is the least element of $\DD_{n}^{B}$, and it clearly satisfies Condition~(1).  Otherwise, there is some $i\in[n-1]$ with $h_{i}>i$.  Since $\pf$ is regular, it follows from Corollary~\ref{cor:pseudocomp_b} that $h_{i}=h_{i}^{\cc\cc}=h_{i+1}^{\cc\cc}=h_{i+1}$.  Since $h_{n}=n$, there must be some minimal index $s\in[n]$ with $s>i$ such that $h_{s}=s$, and we find that $h_{i}=h_{i+1}=\cdots=h_{s}=s$, and $\hh_{\pf}$ thus satisfies Condition~(1).
	
	(ii) Let $h_{k}>n$.  Suppose there is some $i\in[k-2]$ with $h_{i}>i$.  Since $\pf$ is regular, it follows from Corollary~\ref{cor:pseudocomp_b} that $h_{i}=h_{i}^{\cc\cc}=h_{i+1}^{\cc\cc}=h_{i+1}$.  For the moment we claim that $h_{k-1}=k-1$, which implies that there is some minimal index $s\in[k-1]$ with $s>i$ such that $h_{s}=s$, and we find that $h_{i}=h_{i+1}=\cdots=h_{s}=s$.  In order to establish $h_{k-1}=k-1$, we distinguish two more cases:\\
	(iia) Let $k=n$.  Then $h_{n}=n+1$, and Corollary~\ref{cor:pseudocomp_b} implies $k'=n$ and $h_{n}^{\cc}=n$.  Since $\pf$ is regular, we have $k''=k=n$, and thus $h_{n-1}^{\cc}>n-1$, which in view of Corollary~\ref{cor:pseudocomp_b} yields $h_{n-1}^{\cc\cc}=n-1$.  Thus $h_{n-1}=n-1$, since $\pf$ is regular.\\
	(iib) Let $k<n$.  Then $h_{k}=2n-k+1$, and Corollary~\ref{cor:pseudocomp_b} implies $k'=n$ and $h_{s}^{\cc}=s$ for $s\geq k$.  Since $\pf$ is regular, we have $k''=k$, and thus $h_{k-1}^{\cc}>k-1$, which in view of Corollary~\ref{cor:pseudocomp_b} yields $h_{k-1}^{\cc\cc}=k-1$.  Thus $h_{k-1}=k-1$, since $\pf$ is regular.
	
	Hence $\hh_{\pf}$ satisfies Condition~(2).
	
	\smallskip
	
	Conversely, suppose first that $\hh_{\pf}$ has the properties given in Condition~(1).  In view of Lemma~\ref{lem:height_sequence_b}, we conclude $k=n$, and Corollary~\ref{cor:pseudocomp_b} implies $k'\leq n$, and $h_{k'}^{\cc}>n$, which in turn implies $k''=n$ and $h_{n}^{\cc\cc}=n=h_{n}$.  If $i<n$ with $h_{i}=i$, then Corollary~\ref{cor:pseudocomp_b} implies $h_{i}^{\cc}=h_{i+1}^{\cc}\geq i+1$, which in turn implies $h_{i}^{\cc\cc}=i$.  If $i<n$ with $h_{i}>i$, then by assumption if $h_{i}=s$, we have $h_{i}=h_{i+1}=\cdots=h_{s}=s$.  The previous implies that $h_{s}^{\cc\cc}=s$, and Corollary~\ref{cor:pseudocomp_b} implies $h_{t}^{\cc\cc}=h_{t+1}^{\cc\cc}$ for $t\in\{i,i+1,\ldots,s-1\}$.  Hence we have $h_{i}^{\cc\cc}=h_{i+1}^{\cc\cc}=\cdots=h_{c}^{\cc\cc}=c$, and consequently $\hh_{\pf}=\hh_{(\pf^{\cc})^{\cc}}$, which implies that $\pf$ is regular.
	
	Now suppose that $\hh_{\pf}$ has the properties given in Condition~(2).  First say that $k=n$.  Corollary~\ref{cor:pseudocomp_b} implies $k'=n, h_{n}^{\cc}=n$ and $h_{n-1}^{\cc}=h_{n}^{\cc}=n$.  If we apply Corollary~\ref{cor:pseudocomp_b} again, we get $k''=n, h_{n}^{\cc\cc}=n+1=h_{n}$ and $h_{n-1}^{\cc\cc}=n-1=h_{n-1}$.  Now say that $k<n$.  Corollary~\ref{cor:pseudocomp_b} implies $k'=n$, as well as $h_{s}^{\cc}=s$ for $s\in\{k,k+2,\ldots,n\}$ and $h_{k-1}^{\cc}=k$.  If we apply Corollary~\ref{cor:pseudocomp_b} again, we get $k''=k, h_{k}^{\cc}=2n-k+1=h_{k}$ and $h_{k-1}^{\cc\cc}=k-1=h_{k-1}$.  For $i<k-2$, we obtain $h_{i}=h_{i}^{\cc\cc}$ as in the previous paragraph.  Hence we have $\hh_{\pf}=\hh_{(\pf^{\cc})^{\cc}}$, which implies that $\pf$ is regular.
\end{proof}

\begin{example}
	The highlighted Dyck paths in Figure~\ref{fig:dominance_b3} are the regular elements of $\DD_{3}^{B}$.  The Dyck path $\pf\in D_{3}^{B}$ given by the height sequence $\hh_{\pf}=(2,4)$ is for instance not regular, since $\pf^{\cc}$ has height sequence $\hh_{\pf^{\cc}}=(1,2,3)$, and $(\pf^{\cc})^{\cc}$ has height sequence $\hh_{(\pf^{\cc})^{\cc}}=(6)$.
\end{example}

For a Dyck path $\pf\in D_{n}^{B}$ define a \alert{return at $i$} as in the type-$A$ case.  Additionally, $\pf$ has an \alert{upper end at $i$} if the coordinate $(i,2n-i)$ belongs to the path and $0\leq i<n$, or equivalently if $w_{\pf}$ contains the letter $u$ precisely $2n-i$ times.  A return at $i$ and an upper end at $j$ are \alert{consecutive} if $\pf$ does not have a return at some $k$ for $k>i$.  

\begin{corollary}\label{cor:regular_paths_b}
	A Dyck path $\pf\in D_{n}^{B}$ is regular if and only if for every pair $(i,j)$ with $0\leq i<j\leq n$ the following is satisfied: if $\pf$ has consecutive returns at $i$ and $j$, then the coordinate $(i,j)$ belongs to the path, and if $\pf$ has a return at $i$ and an upper end at $j$ which are consecutive, then $i=j<n$.
\end{corollary}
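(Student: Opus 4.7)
The approach is to reduce Corollary~\ref{cor:regular_paths_b} to Proposition~\ref{prop:regular_b} by translating the two geometric conditions into statements about the height sequence. Concretely, a return at $i$ corresponds exactly to $h_i = i$, while the monotonicity of $\hh_{\pf}$ together with the endpoint bounds in Lemma~\ref{lem:height_sequence_b} forces any upper end at $j<n$ to be the endpoint of $\pf$: it must occur either at $j = k-1$ (if $\pf$ ends with an up-step, so $h_k = 2n-k+1$) or at $j = k$ (if $\pf$ ends with a right-step and $k < n$, so $h_k = 2n-k$), since an upper end at any interior abscissa would violate $h_i \leq 2n-k$ for $i < k$. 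With these two dictionaries in hand, the rest is a direct match with the two alternatives in Proposition~\ref{prop:regular_b}.

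For the forward implication, I apply Proposition~\ref{prop:regular_b}. If its Condition~(1) holds, then $\pf$ is a type-$A$ Dyck path, so it has no upper end at any $j < n$ (making the ``upper end'' clause of the corollary vacuous), and the height-sequence condition there is precisely the one in Proposition~\ref{prop:regular_a}, so Corollary~\ref{cor:regular_paths_a} delivers the consecutive-returns clause of Corollary~\ref{cor:regular_paths_b}. If Condition~(2) holds instead, then $\pf$ ends with an up-step, its unique upper end sits at $j = k-1$, and $h_{k-1} = k-1$ makes $k-1$ the last return of $\pf$; hence $i = j = k-1 < n$, which is exactly the alignment the corollary demands. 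The consecutive-returns clause on the prefix $[1,k-1]$ follows by an argument identical to that of Corollary~\ref{cor:regular_paths_a}.

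For the converse, I split on whether $\pf$ admits an upper end at some $j < n$. If it does not, then $\pf$ must end at $(n,n)$, i.e., $k = n$ and $h_n = n$, and applying Corollary~\ref{cor:regular_paths_a} inside $\DD_{n}^{A}$ to the height sequence yields Condition~(1) of Proposition~\ref{prop:regular_b}. If it does, then the corollary's second clause pins the last return $i$ to coincide with $j < n$, so $h_j = j$. The geometric dichotomy above rules out $j = k$: combining $h_k = k$ with $h_k = 2n-k$ forces $k = n$, hence $j = n$, contradicting $j < n$. We are thus in the case $j = k-1$, $\pf$ ends with an up-step, and $h_{k-1} = k-1$; the consecutive-returns clause on $[0,k-1]$ then yields Condition~(2) of Proposition~\ref{prop:regular_b}. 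The main obstacle is getting the upper-end dichotomy straight and verifying that it aligns neatly with the two alternatives in Proposition~\ref{prop:regular_b}; once this correspondence is nailed down, the rest is essentially a rerun of the type-$A$ argument.
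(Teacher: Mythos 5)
Your proposal is correct and follows essentially the same route as the paper: both reduce the statement to Proposition~\ref{prop:regular_b} by translating returns into the condition $h_{i}=i$ and the (necessarily unique, endpoint-located) upper end into the dichotomy $h_{k}=2n-k+1$ versus $h_{k}=2n-k$, and then match the resulting height-sequence constraints with Conditions~(1) and~(2). The only cosmetic difference is that you delegate the consecutive-returns part to Proposition~\ref{prop:regular_a} and Corollary~\ref{cor:regular_paths_a} (implicitly reading the latter with the trivial return at $0$ included, as its proof allows), whereas the paper reruns that type-$A$ argument inline.
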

\begin{proof}
	Let $\pf\in D_{n}^{B}$ have height sequence $\hh_{\pf}=(h_{1},h_{2},\ldots,h_{k})$.  
	
	First of all, we assume that $\pf$ has no upper end (and hence it has a return at $n$).  Hence $k=n$ and $h_{n}=n$.  Suppose that $\pf$ has two consecutive returns at $i$ and $j$ for $i<j$.  Say that $\pf$ passes through $(i,j)$.  If $j=i+1$, then we have $h_{i}=i$ and $h_{i+1}=i+1$.  If $j>i+1$, then (since the path passes through $(i,j)$) we have $h_{i}=i$ and $h_{i+1}=h_{i+2}=\cdots h_{j}=j$.  Hence $\hh_{\pf}$ satisfies Condition~(1) of Proposition~\ref{prop:regular_b}, and $\pf$ is thus regular.  Say now that $\pf$ does not pass through $(i,j)$.  Then we have $h_{i+1}=j'<j$.  Since $i$ and $j$ are consecutive returns it follows that $h_{j'}>j'$, and there must be some minimal $s\in[n]$ with $i+1<s<j'$ and $h_{i+1}=h_{s}<h_{s+1}$.  Again since $i$ and $j$ are consecutive returns, we obtain $h_{i+1}=h_{i+2}=\cdots=h_{s}<s$, which in view of Proposition~\ref{prop:regular_b} implies that $\pf$ is not regular.
	
	Now assume that $\pf$ has a return at $i$ and an upper end at $j$, which are consecutive.  If $i=j<n$, then $k=j+1$ and $h_{k}=2n-k+1$ as well as $h_{k-1}=h_{i}=i=k-1$.  In view of the previous paragraph, $\hh_{\pf}$ satisfies Condition~(2) of Proposition~\ref{prop:regular_b}, and $\pf$ is thus regular.  If $i<j<n$, then $k=j$ and $h_{k}=2n-k$ or $h_{k-1}>k-1$, which in view of Proposition~\ref{prop:regular_b} implies that $\pf$ is not regular.
\end{proof}

\begin{corollary}\label{cor:number_regular_b}
	The number of regular elements of $\DD_{n}^{B}$ is $2^{n}$ for $n>0$.
\end{corollary}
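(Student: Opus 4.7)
The plan is to partition the regular elements of $\DD_n^B$ according to the two cases of Proposition~\ref{prop:regular_b} and count each separately. The two cases are mutually exclusive, since $h_k=n$ forces $k=n$ by Lemma~\ref{lem:height_sequence_b}, and consequently the alternative $h_k = 2n-k+1 = n+1$ cannot hold simultaneously. So it suffices to count each case and add.

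For Case~(1) of Proposition~\ref{prop:regular_b}, the path satisfies $k=n$ and $h_n=n$, so it ends at $(n,n)$ and is in fact a type-$A$ Dyck path. The stated conditions on the remaining entries of the height sequence coincide verbatim with the regularity condition of Proposition~\ref{prop:regular_a}. Hence Case~(1) contributes exactly the regular elements of $\DD_n^A$, of which there are $2^{n-1}$ by Corollary~\ref{cor:number_regular_a}. For Case~(2), the path has $h_k = 2n-k+1$ and $h_{k-1}=k-1$ for some $k\in[n]$; by the second clause of Corollary~\ref{cor:regular_paths_b}, such a path consists of a regular type-$A$ Dyck path of semilength $k-1$ from $(0,0)$ to $(k-1,k-1)$ followed by a straight ascent of $2(n-k+1)$ up-steps to the endpoint $(k-1,2n-k+1)$. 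I would therefore set up a bijection between Case~(2) paths and the disjoint union $\bigsqcup_{k=1}^{n}\{\text{regular elements of }\DD_{k-1}^A\}$, where $\DD_0^A$ is treated as the one-point lattice. Summing via Corollary~\ref{cor:number_regular_a} yields $1+\sum_{k=2}^{n}2^{k-2}=2^{n-1}$.

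Adding the two class sizes gives $2^{n-1}+2^{n-1}=2^n$. The one step that requires care is verifying that the bijection in Case~(2) is both well-defined and exhaustive: the value of $k$ is forced by the path (since $k-1$ is the abscissa of its endpoint, equivalently the position of its last non-trivial return, or $0$ if none exists), and conversely, concatenating any regular type-$A$ Dyck path of semilength $k-1$ with the appropriate number of up-steps does produce a regular type-$B$ path with the right height-sequence shape. Both directions reduce to straightforward checks against Corollary~\ref{cor:regular_paths_b} and Lemma~\ref{lem:height_sequence_b}, so the main obstacle is bookkeeping rather than any substantive combinatorial difficulty.
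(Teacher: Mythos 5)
Your proposal is correct and takes essentially the same route as the paper: both split the regular elements according to whether an upper end is present, identify the class without an upper end with the regular elements of $\DD_{n}^{A}$ (hence $2^{n-1}$ of them by Corollary~\ref{cor:number_regular_a}), and recognize a regular path with an upper end as a regular type-$A$ prefix followed by a straight ascent. The only difference is bookkeeping in the second class: the paper pairs it directly with the first class by turning the last return into an upper end, whereas you stratify by the upper-end position and sum $1+\sum_{k=2}^{n}2^{k-2}=2^{n-1}$, which amounts to the same count.
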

\begin{proof}
	First we observe that if $\pf$ does not have an upper end, then $\pf\in D_{n}^{A}$, and in view of Corollary~\ref{cor:number_regular_a} there are $2^{n-1}$ such regular paths.  For any such regular $\pf$ which has non-trivial returns at $i_{1},i_{2},\ldots,i_{k}$, we can create a regular Dyck path $\pf'$ by turning the return at $i_{k}$ into an upper end at $i_{k}$.  In view of Proposition~\ref{prop:regular_b} this is a bijection.  Hence the number of regular elements of $\DD_{n}^{B}$ is $2\cdot 2^{n-1}=2^{n}$ as desired.
\end{proof}

\section*{Acknowledgements}
	\label{sec:acknowledgements}
I would like to thank the anonymous referees for many valuable comments that greatly improved content and presentation of the article.

\bibliography{../../literature}

\end{document}